\newtheorem{thm}{Theorem}[section]
\newtheorem{cor}[thm]{Corollary}
\newtheorem{prop}[thm]{Proposition}
\newtheorem{lemma}[thm]{Lemma}
\newtheorem{defithm}[thm]{Definition -- Theorem}
\theoremstyle{definition}
\newtheorem{defi}[thm]{Definition}
\newtheorem{rem}[thm]{Remark}
\newtheorem{exa}[thm]{Example}
\numberwithin{equation}{section}
\def\N{\mathds N}
\def\Z{\mathds Z}
\def\Q{\mathds Q}
\def\C{\mathds C}
\def\epsilon{\varepsilon}
\def\phi{\varphi} 
\def\rho{\varrho}
\def\id{{\it id}}
\def\A{\mathds A}
\def\G{\mathds G}
\def\P{\mathds P}
\def\O{\mathcal O}
\DeclareMathOperator{\Mor}{Mor}
\DeclareMathOperator{\Hom}{Hom}
\DeclareMathOperator{\SHom}{{\mathscr H}\!\!\textit{om}\,}
\DeclareMathOperator{\SEnd}{{\mathscr E}\!\textit{nd}\,}
\DeclareMathOperator{\UHom}{\underline{Hom}}
\DeclareMathOperator{\UEnd}{\underline{End}}
\DeclareMathOperator{\UAut}{\underline{Aut}}
\DeclareMathOperator{\coker}{coker}
\DeclareMathOperator{\St}{St}
\DeclareMathOperator{\GL}{GL}
\DeclareMathOperator{\PGL}{PGL}
\DeclareMathOperator{\Spec}{Spec}
\DeclareMathOperator{\Proj}{Proj}
\DeclareMathOperator{\conv}{conv}
\DeclareMathOperator{\interior}{int}
\DeclareMathOperator{\relint}{relint}
\DeclareMathOperator{\Pic}{Pic}
\DeclareMathOperator{\NS}{NS}
\DeclareMathOperator{\Lim}{Lim}
\begin{document}

\title{\vspace*{-1.1cm}Quivers and moduli spaces of pointed curves of genus zero}
\author{Mark Blume, Lutz Hille}
\date{}

\maketitle

\begin{abstract}
We construct moduli spaces of representations of quivers over arbitrary
schemes and show how moduli spaces of pointed curves of genus zero 
like the Grothendieck-Knudsen moduli spaces $\overline{M}_{0,n}$
and the Losev-Manin moduli spaces $\overline{L}_n$ can be interpreted 
as inverse limits of moduli spaces of representations of certain bipartite quivers.
We also investigate the case of more general Hassett moduli spaces
$\overline{M}_{0,a}$ of weighted pointed stable curves of genus zero. 
\end{abstract}

\smallskip

\tableofcontents

\bigskip

\thispagestyle{empty}
\enlargethispage{4mm}

\noindent
------------

{\small Research supported by DFG CRC 878 Groups, Geometry and Actions.}

\section*{Introduction}
\addcontentsline{toc}{section}{Introduction}

The main topic of this paper is the relation of moduli spaces of pointed curves 
of genus zero, in particular the Grothendieck-Knudsen moduli spaces
$\overline{M}_{0,n}$ and the Losev-Manin moduli spaces $\overline{L}_n$,
but also more general Hassett moduli spaces of weighted pointed
stable curves of genus zero $\overline{M}_{0,a}$,
to moduli spaces of representations of the following quivers 
$Q_n$ and $P_n$ with fixed dimension vector as indicated by the numbers
at the vertices:

\bigskip

\setlength{\unitlength}{1mm}
\noindent
\begin{picture}(150,70)(0,0)
\filltype{black}

\put(20,40){\circle*{1.2}}
\put(50,60){\circle*{1.2}}
\put(50,49.8){\circle*{1.2}}
\dottedline{2}(50,46)(50,24)
\put(50,20){\circle*{1.2}}
\put(47,58){\vector(-3,-2){25}}
\put(47,48.8){\vector(-3,-1){25}}
\put(47,22){\vector(-3,2){25}}
\put(16,40){\makebox(0,0)[c]{\small$2$}}
\put(54,60){\makebox(0,0)[c]{\small$1$}}
\put(54,49.8){\makebox(0,0)[c]{\small$1$}}
\put(54,20){\makebox(0,0)[c]{\small$1$}}
\put(35,10){\makebox(0,0)[c]{\Large$Q_n$}}
\put(35,4){\makebox(0,0)[c]{(\small$n+1$ vertices)}}

\put(95,35){\circle*{1.2}}
\put(95,45){\circle*{1.2}}
\put(125,60){\circle*{1.2}}
\put(125,49.9){\circle*{1.2}}
\dottedline{2}(125,46)(125,24)
\put(125,20){\circle*{1.2}}
\put(123,59){\vector(-2,-1){26}}
\put(123,58.33){\vector(-6,-5){26}}
\put(123,49.5){\vector(-6,-1){26}}
\put(123,48.8){\vector(-2,-1){26}}
\put(123,21){\vector(-2,1){26}}
\put(123,21.66){\vector(-6,5){26}}
\put(91,35){\makebox(0,0)[c]{\small$1$}}
\put(91,45){\makebox(0,0)[c]{\small$1$}}
\put(129,60){\makebox(0,0)[c]{\small$1$}}
\put(129,49.9){\makebox(0,0)[c]{\small$1$}}
\put(129,20){\makebox(0,0)[c]{\small$1$}}
\put(115,10){\makebox(0,0)[c]{\Large$P_n$}}
\put(115,4){\makebox(0,0)[c]{(\small$n+2$ vertices)}}
\end{picture}

\bigskip\bigskip

Moduli spaces of representations $\mathcal M^\theta(Q)$ of a quiver $Q$ 
with some fixed dimension vector are constructed via geometric invariant theory 
(GIT) and depend on the choice of a weight $\theta$. 
The collection of $\mathcal M^\theta(Q)$ for $\theta$ representing
the finitely many GIT equivalence classes forms an inverse system and
as inverse limit we have a space $\varprojlim_\theta\,\mathcal M^\theta(Q)$
independent of choice of weights.

\medskip

In this paper we show over arbitrary base schemes, see theorems \ref{thm:Ln-limPn} 
and \ref{thm:M0n-limQn} (over $\C$ this is essentially known via different methods,
though usually formulated without quivers, see below):

\medskip

\noindent
{\bf Theorem.} 
\[
\overline{L}_n\;\,\cong\;\;\varprojlim\limits_\theta\,\mathcal M^\theta(P_n)
\qquad\qquad
\overline{M}_{0,n}\;\cong\;\;\varprojlim\limits_\theta\,\mathcal M^\theta(Q_n)
\]

Our methods also apply to more general Hassett moduli spaces $\overline{M}_{0,a}$ 
of weighted pointed stable curves of genus zero \cite{Ha03}. 
In theorem \ref{thm:M0a-limQna} we show that any of these Hassett moduli spaces 
is a limit of GIT quotients over an area in the weight space
(considering normalised weights), which is new already over $\C$:

\medskip

\noindent
{\bf Theorem.} 
\[\overline{M}_{0,a}\;\cong\;\:\varprojlim_{\theta<a}\mathcal M^\theta(Q_n).\]

\medskip

It is natural to work over $\Spec\Z$ as the moduli spaces of pointed curves 
are defined over the integers, and so are moduli spaces of representations 
of quivers. However, we have to extend the results on moduli spaces of 
representations of quivers in \cite{Ki94} to arbitrary base schemes 
(see theorem \ref{thm:modulireprquiver}):

\medskip

\noindent
{\bf Theorem.} {\it For a finite quiver $Q$ with indivisible dimension vector 
and generic weight $\theta$ there is, over an arbitrary base scheme, a fine moduli 
space $\mathcal M^\theta(Q)$ whose $Y$-valued points correspond to $\theta$-stable
representations over $Y$ up to isomorphism over a Zariski cover.}

\medskip

Here the main issue is less the construction of these spaces by application 
of the theory of GIT over arbitrary schemes \cite{Al14}, \cite{Ses77} to this 
problem, but more the proof that these GIT quotients for generic weights are 
in fact fine moduli spaces of quiver representations and the application of 
the necessary techniques, like the result of \cite{An73} in the proof of 
proposition \ref{prop:Rs-Mtheta-torsor}.

\medskip

To show the isomorphisms between moduli spaces of pointed curves of genus $0$
and limits of GIT quotients we use the interpretation of these GIT quotients 
for generic weights as fine moduli spaces and compare the functor of the limit 
to the moduli functor of pointed curves via the following steps:

\noindent
\ -- The functor of the limit associates to a scheme $Y$ the set of collections 
$(\phi_\theta)_\theta$ of morphisms (or equivalently collections $(V^\theta)_\theta$ 
of equivalence classes of $\theta$-stable representations over $Y$) for generic 
$\theta$ (i.e.\ corresponding to full-dimensional GIT equivalence classes) that 
satisfy relations coming from  GIT classes of codimension one, see proposition 
\ref{prop:functorinvlim} and (\ref{eq:functorinvlim-QnPn}) at the beginning of 
subsection \ref{subsec:functor-limPnQn}.

\noindent
\ -- In proposition \ref{prop:functorinvlimQnPn} we express the relations between
the representations $(V^\theta)_\theta$ in the limit functor by certain equations. 
Whereas the original relations are local in the weight space, we have equations 
relating pairs of representations for all generic $\theta$. We can reduce these sets 
of equations by remark \ref{rem:restr-eq-theta-theta'}.

\noindent
\ -- In the proofs of the results for the individual moduli spaces of pointed curves
(theorems \ref{thm:Ln-limPn}, \ref{thm:M0n-limQn} and \ref{thm:M0a-limQna}) we
restrict the collections $(V^\theta)_\theta$ to certain sets of generic weights. 
Considering representations as $\P^1$-bundles with $n$ sections 
(see corollary \ref{cor:reprQn-P1n-GIT} and \ref{cor:reprPn-P1n-GIT}),
these weights $\theta^i$ (resp.\ $\theta^T$) correspond to sets of three sections, 
such that a representation is stable with respect to this weight if and only if 
these three sections are disjoint.

\noindent
\ -- Normalising the representations with respect to the three sections for each 
of these weights, the functor of the inverse limit becomes a functor consisting 
of sections of $\P^1$ that satisfy certain equations.
We compare these functors of the limit with the functors of the moduli spaces 
of pointed curves, described in theorems \ref{thm:Ln-ProdP1-functor}, 
\ref{thm:M0n-ProdP1-functor} and \ref{thm:functorM0a}, that arise from a natural 
closed embedding of these moduli spaces into a products of $\P^1$'s related to 
root systems of type $A$.

\medskip

There is the following known relation between moduli spaces of pointed curves 
of genus $0$ and GIT quotients of $(\P^1)^n$.
Working over the complex numbers, by \cite{KSZ91} under some assumptions
limits of GIT quotients of toric varieties $(X,T)$ by subtori of $T$ 
are the same as Chow quotients. 
Toric Chow quotients can be described by fibre polytopes, and in the case of 
quotients of projective spaces by secondary polytopes \cite{KSZ91}, \cite{GKZ}.
The Losev-Manin moduli space $\overline{L}_n$ is the projective toric variety
corresponding to the $(n-1)$-dimensional permutohedron \cite{LM00}, which arises 
as secondary polytope of the product of simplices $\Delta^1\times\Delta^{n-1}$ 
and as fibre polytope for the operation of $\G_m$ 
on $(\P^1)^n$. 
By the Gelfand-MacPherson correspondence (see \cite{Ka93a}) the GIT quotients 
$(\P^1)^n$ by $\PGL(2)$ and the GIT quotients of the Grassmannian $G(2,n)$ 
by the maximal torus $(\G_m)^n\subset\GL(n)$, and similarly the Chow quotients 
$(\P^1)^n/\!\!/\PGL(2)$ and $G(2,n)/\!\!/(\G_m)^n$ coincide over the complex numbers. 
By \cite{Ka93a} the Chow quotients $G(2,n)/\!\!/(\G_m)^n$ coincide with the 
Hilbert quotients
$G(2,n)/\!\!/\!\!/(\G_m)^n$ and by \cite{Ka93b} $\overline{M}_{0,n}$ 
is the closure of the space of Veronese curves in the Hilbert scheme or Chow variety.
This is used to show that the Chow quotient $G(2,n)/\!\!/(\G_m)^n$ is isomorphic
to $\overline{M}_{0,n}$, see \cite[(4.1.8)]{Ka93a}.
Together the works \cite{Ka93a}, \cite{Ka93b}, \cite{KSZ91} show that over
the complex numbers $\overline{M}_{0,n}$ is the limit of GIT quotients 
$(\P^1)^n/_{\!\theta}\PGL(2)$ (see also \cite{Th99}).

With the straightforward link between the moduli spaces of representations\linebreak
$\mathcal M^\theta(Q_n)$, $\mathcal M^\theta(P_n)$ and the GIT quotients 
$(\P^1)^n/_{\!\theta}\PGL(2)$, $(\P^1)^n/_{\!\theta}\,\G_m$ as described\linebreak
in subsection \ref{subsec:quivermoduli-GITP1}
(related to the Gelfand-MacPherson correspondence) we recover the results that 
over the complex numbers
$\overline{L}_n\cong\varprojlim_\theta\,(\P^1)^n/_{\!\theta}\,\G_m$
(\cite{KSZ91}, \cite{LM00}) and
$\overline{M}_{0,n}\cong\varprojlim_\theta\,(\P^1)^n/_{\!\theta}\PGL(2)$
(\cite{Ka93a}).
Our different, more direct methods are valid over arbitrary base schemes and
do not involve Chow varieties.

\medskip

It is known by \cite{GJM13} that over the complex numbers for fixed $n$ all 
Hassett moduli spaces of $n$-pointed weighted pointed stable curves of genus zero 
can be realised as GIT quotients of a certain space involving the component of 
the Chow variety of degree $d$ curves in $\P^d$ containing rational normal curves, 
which, as a parameter space of embedded $n$-pointed curves of genus zero, is already 
close to the moduli problem.

In the present paper we describe these Hassett moduli spaces as a limit of 
GIT quotients that come from quivers, hence in particular GIT quotients of 
affine space. Here the individual GIT quotients are quite simple, furthermore, 
in our construction we restrict to a collection of GIT quotients for generic 
weights all of which are products of $\P^1$'s.

\medskip

Observing that the limits of moduli spaces of representations  
for these very simple quivers give rise to relevant moduli spaces of pointed curves,
it may be interesting to describe and identify the varieties that 
arise this way from other quivers.

\bigskip\medskip

{\it Outline of the paper.}
In section \ref {sec:moduliquiver} we construct fine moduli spaces of quiver 
representations over arbitrary schemes. To construct the spaces 
$\mathcal M^{\theta-s}(Q,d)\subseteq\mathcal M^\theta(Q,d)$ of $\theta$-stable 
and $\theta$-semistable representations we apply the theory of GIT (see \cite{MFK}) 
over arbitrary base schemes, using the formulation in terms of stacks in \cite{Al14}.
We show in theorem \ref{thm:modulireprquiver} that for a finite quiver $Q$, 
an indivisible dimension vector $d$ and a weight $\theta$ the space 
$\mathcal M^{\theta-s}(Q,d)$ is a fine moduli space parametrising $\theta$-stable 
representations up to isomorphism over a Zariski cover. The main point here is 
to show that the natural group operation on the stable locus of the representation 
space is free and the morphism to the quotient a torsor, see proposition 
\ref{prop:Rs-Mtheta-torsor}. This extends the results of \cite{Ki94}.

The first step to describe the functors of points of inverse limits of 
moduli spaces of quiver representations is taken in proposition \ref{prop:functorinvlim}
by describing the functor of the limit by the functors of the moduli spaces
$\mathcal M^\theta(Q,d)$ for equivalence classes of generic weights $\theta$ 
subject to some relations. 

\pagebreak

Section \ref{sec:weightspace-moduli-PnQn} contains on the one hand some preparations 
and miscellaneous material concerning the quivers $P_n$ and $Q_n$: 
the structure of the weight spaces for the quivers $P_n$ and $Q_n$ 
(compare to \cite[(1.2),(1.3)]{Ka93a}, \cite[7.3.C]{GKZ});
the relation between $Q_{n+2}$ and $P_n$, their weights and stable representations
(this corresponds to the morphism $\overline{M}_{0,n+2}\to\overline{L}_n$ after 
a choice of two of the $n+2$ marked points);
the relation of moduli spaces of representations for $P_n$ and $Q_n$ and 
GIT quotients of $(\P^1)^n$. The last two points, discussed in subsections
\ref{subsec:Qn+2-Pn} and \ref{subsec:quivermoduli-GITP1}, both derive from 
different presentations of certain stacks.

On the other hand subsection \ref{subsec:functor-limPnQn} is one of the central parts 
of this paper where in proposition \ref{prop:functorinvlimQnPn} we describe the functor 
of the limit of moduli spaces of representations for the quivers $P_n$ and $Q_n$ 
in terms of families of representations $(V^\theta)_\theta$, where the weights $\theta$ 
represent the equivalence classes of generic weights, satisfying certain equations.

\medskip

In section \ref{sec:Ln-M0n} we consider the Grothendieck-Knudsen moduli spaces
and Losev-Manin moduli spaces.
It is known that the moduli spaces $\overline{L}_n$ and $\overline{M}_{0,n}$ 
can be embedded into a product of $\,\P^1$'s via the natural morphisms
$\overline{L}_n\to\overline{L}_2\cong\P^1$ and 
$\overline{M}_{0,n}\to\overline{M}_{0,4}\cong\P^1$ (see \cite{BB11}, \cite{GHP88}). 
These closed embeddings have interpretations in terms of root systems of type $A$.
In the case of the Losev-Manin moduli space $\overline{L}_n\subseteq(\P^1)^{\binom{n}{2}}$ 
each copy of $\,\P^1$ corresponds to a root subsystem in $A_{n-1}$ isomorphic 
to $A_1$, whereas the equations come from the root subsystems isomorphic 
to $A_2$, see \cite{BB11}.
In the case of the Grothendieck-Knudsen moduli space 
$\overline{M}_{0,n}\subseteq(\P^1)^{\binom{n}{4}}$ each copy
of $\P^1$ corresponds to a root subsystem in $A_{n-1}$ isomorphic 
to $A_3$, whereas the equations come from the root subsystems isomorphic 
to $A_4$. This way the moduli space $\overline{M}_{0,n}$ can also be interpreted
as the cross-ratio variety of root systems of type $A_3$ in $A_{n-1}$,
cf.\ \cite{Sek94}, \cite{Sek96}.
We summarise the results on these embeddings, the interpretation in terms 
of root systems and the related description of the functor of points of 
$\overline{L}_n$ and $\overline{M}_{0,n}$ in theorems \ref{thm:Ln-ProdP1-functor} 
and \ref{thm:M0n-ProdP1-functor}.

Proving that there are isomorphisms 
$\overline{L}_n\cong\varprojlim_\theta\,\mathcal M^\theta(P_n)$
(theorem \ref{thm:Ln-limPn}) and
$\overline{M}_{0,n}\cong\varprojlim_\theta\,\mathcal M^\theta(Q_n)$
(theorem \ref{thm:M0n-limQn}), we compare the moduli functors of pointed curves
with the corresponding functors of the limit.
We use the description of proposition \ref{prop:functorinvlimQnPn} and simplify it, 
where the main step consists of restricting the family of representations
$(V^\theta)_\theta$ to certain sets of weights of the form 
$\theta^i$ (resp.\ $\theta^T$).

\medskip

In section \ref{sec:M0a} we apply the same methods to the Hassett moduli spaces 
to show that $\overline{M}_{0,a}\cong\varprojlim_{\theta<a}\mathcal M^\theta(Q_n)$
(theorem \ref{thm:M0a-limQna}). Here some additions are necessary to take into 
account the notion of $a$-stability of $n$-pointed curves.
On the quiver side this corresponds to a restriction to weights in the convex
polytope $P(a)=\{\theta\in\Delta(2,n)\:|\:\theta<a\}$.
We also need a description of the moduli functor $\overline{M}_{0,a}$
analogous to that of $\overline{L}_n$ and $\overline{M}_{0,n}$ arising from 
an embedding into a product of $\P^1$'s, see theorem \ref{thm:functorM0a}.

\bigskip

{\it Notations and conventions.}
We work over a base scheme $S$, usually we think of $S$ as $\Spec\Z$.
Products are over the base scheme $S$ unless otherwise specified. 
We use the theory of schemes as developed in \cite{EGA}, \cite{EGA1}. 
We sometimes write $y\in Y$ for a geometric point $y\colon\Spec k\to Y$.
We use the standard theory of algebraic stacks, see \cite{SP}. 
We work with algebraic stacks in the fppf topology.
Concerning stability of quiver representations we use the sign convention 
opposite to \cite{Ki94}. For $d,d'\in\Q^n$ we write $d'\leq d$ if and only if 
$d'_i\leq d_i$ for all $i$, and we write $d'<d$ if $d'\leq d$ and $d'\neq d$.

\section{Moduli spaces of quiver representations}
\label{sec:moduliquiver}

\subsection{Moduli of quiver representations over arbitrary schemes}

Let $Q$ be a finite connected quiver with set of vertices $Q_0$ and
set of arrows $Q_1$. 
For an arrow $\alpha\in Q_1$ we denote $s(\alpha)$ and $t(\alpha)$ the source
and target, i.e.\ $s(\alpha)\stackrel{\alpha\:}{\longrightarrow}t(\alpha)$.

\begin{defi}
A {\it representation} of $Q$ of dimension vector $(d_q)_{q\in Q_0}$ over 
a scheme $Y$ is a tuple $V=((V_q)_{q\in Q_0},(\phi_\alpha)_{\alpha\in Q_1})$ 
where $V_q$ is a locally free $\O_Y$-module of rank $d_q$ and
$\phi_\alpha\colon V_{s(\alpha)}\to V_{t(\alpha)}$ a homomorphism of 
$\O_Y$-modules. A representation over $Y$ is called free if the underlying
$\O_Y$-module is free. The notions of homomorphisms of representations,
subrepresentations etc.\ are defined as usual.
\end{defi}

\begin{defi}
The {\it representation space} of $Q$ for the dimension vector $d=(d_q)_{q\in Q_0}$
over the base scheme $S$ is the affine space over $S$
\[
R(Q,d)=\prod_{\alpha\in Q_1}\UHom_S(\O_S^{\oplus d_{s(\alpha)}},
\O_S^{\oplus d_{t(\alpha)}})
\]
where $\UHom_S(\O_S^{\oplus d_{s(\alpha)}},\O_S^{\oplus d_{t(\alpha)}})$ 
is the Hom-scheme that represents the contravariant functor on $S$-schemes
$Y\mapsto\Hom_Y(\O_Y^{\oplus d_{s(\alpha)}},\O_Y^{\oplus d_{t(\alpha)}})$.
Equivalently, the scheme $R(Q,d)$ can be defined as the scheme that represents 
the contravariant functor on $S$-schemes
\[
Y\;\mapsto\;\big\{\;\,\textit{free representations with fixed basis of $(Q,d)$ 
over $Y$}\,\big\}.
\]
On the representation space $R(Q,d)$ the group scheme over $S$
\[
G(d)=\prod_{q\in Q_0}\GL(d_q)
\]
operates by $(g_q)_q(\nu_\alpha)_\alpha=(g_{t(\alpha)}\nu_\alpha 
g_{s(\alpha)}^{-1})_\alpha$ where $(g_q)_q\in G(d)(Y)$,
$(\nu_\alpha)_\alpha\in R(Q,d)(Y)$ for $S$-schemes $Y$.
The diagonal subgroup $\,\G_m\cong\Delta\subseteq G(d)$ operates trivially, 
giving rise to a $\,\overline{G(d)}=G(d)/\Delta$-operation on $R(Q,d)$.
\end{defi}

By \cite{Ki94} the following definition of (semi)stability is equivalent to 
(semi)stability in the sense of GIT of \cite{MFK} (with $\prod_q\det(g_q)^{-\theta_q}$
the character corresponding to $\theta$).

\begin{defi}
For a representation $V$ of $Q$ of dimension vector $d=(d_q)_{q\in Q_0}$
over an algebraically closed field and a weight 
$\theta=(\theta_q)_{q\in Q_0}\in\Q^{Q_0}$, using the notations
$\dim(V)=(\dim(V_q))_{q\in Q_0}$ and $\theta(d)=\sum_{q\in Q_0}\theta_qd_q$, 
we define:\\
{\rm(a)} The representation $V$ is called {\it $\theta$-semistable} if
$\theta(d)=0$ and every subrepresentation $V'\subseteq V$ satisfies 
$\theta(\dim(V'))\leq 0$.\\
{\rm(b)} A $\theta$-semistable representation $V$ is called {\it $\theta$-stable} if 
the only subrepresentations $V'\subseteq V$ with $\theta(\dim(V'))=0$ are $V$ and $0$.
\end{defi}

\begin{defi}
A representation $V$ of $Q$ of dimension vector $(d_q)_{q\in Q_0}$
over a scheme $Y$ is called {\it $\theta$-(semi)stable} if for all geometric points 
$y\colon\Spec k\to Y$ the representation $V_y=V\otimes_{\O_Y} k$ is 
$\theta$-(semi)stable.
\end{defi}

The functors on $S$-schemes 
\[
Y\;\mapsto\;\big\{\textit{free $\theta$-(semi)stable representations with fixed 
basis of $(Q,d)$ over $Y$}\big\}
\]
are represented by open $G(d)$-invariant subschemes $R^\theta(Q,d)\!\subseteq\!R(Q,d)$
(parametrising $\theta$-semistable representations) and 
$R^\textit{$\theta$-s}(Q,d)\!\subseteq\!R^\theta(Q,d)$ (parametrising $\theta$-stable
representations).

\medskip

In order to construct moduli spaces of representations, following \cite{Ki94} we 
consider GIT quotients by the (geometrically) reductive group scheme $G(d)$ or 
$\overline{G(d)}$ (see \cite[Section 9]{Al14} for some results on 
(geometrically) reductive group schemes).
GIT over arbitrary locally noetherian schemes has been worked out first in \cite{Ses77}, 
here we use the formulation of \cite{Al14} in terms of adequate moduli spaces 
of Artin stacks.

\medskip

Let $\mathcal M^\theta(Q,d)=R(Q,d)/_{\!\theta}\,G(d)$ 
the GIT quotient with respect to $\mathscr L=\O_{R(Q,d)}$ 
with the linearisation determined by the character $\theta$
(if $\theta$ is not integral replace $\theta$ by $l\theta$ for some $l\in\Z_{>0}$
such that $l\theta$ is integral), that is
\[\textstyle
\mathcal M^\theta(Q,d)
=\Proj_S\bigoplus_{k\geq 0}(p_*(\mathscr L^{\otimes k}))^{G(d)}
\]
where $p\colon R(Q,d)\to S$ is the structure morphism. We have the quotient morphism 
\[
R^\theta(Q,d)\to\mathcal X^\theta(Q,d)\stackrel{q\,}{\to}\mathcal M^\theta(Q,d).
\] 
where $\mathcal X^\theta(Q,d)=[R^\theta(Q,d)/\overline{G(d)}]$ is the 
quotient stack. The next theorem follows from the result in the affine case
\cite[Thm.\ 9.1.4]{Al14} as this property is local \cite[Prop.\ 5.2.9]{Al14}.

\begin{thm}\label{thm:adequatemodulispace}{\rm\cite{Al14}.}
$q\colon\mathcal X^\theta(Q,d)\to\mathcal M^\theta(Q,d)$
is an adequate moduli space.
\end{thm}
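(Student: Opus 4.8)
The plan is to realise $q\colon\mathcal X^\theta(Q,d)\to\mathcal M^\theta(Q,d)$ as an instance of the projective GIT construction carried out over an arbitrary base in \cite{Al14}, and then to quote the theorem there which asserts that the quotient morphism from the quotient stack of a semistable locus to the associated $\Proj$ is an adequate moduli space, i.e.\ a quasi-compact, quasi-separated morphism to an algebraic space that is adequately affine and induces an isomorphism on structure sheaves. Thus the work consists of checking that the data $\big(R(Q,d),G(d),\mathscr L,\theta\big)$ meets the hypotheses of loc.\ cit.\ and that the objects produced there are the ones in the statement.

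First I would note that $R(Q,d)$ is affine and of finite presentation over $S$, being a finite product of the affine spaces $\UHom_S(\O_S^{\oplus d_{s(\alpha)}},\O_S^{\oplus d_{t(\alpha)}})$. The group scheme $G(d)=\prod_{q\in Q_0}\GL(d_q)$ is smooth and affine over $S$ and geometrically reductive over any base, this being standard for products of general linear groups and in any case covered by \cite[Section 9]{Al14}. Since the diagonal $\Delta\cong\G_m$ is central in $G(d)$ and acts trivially on $R(Q,d)$, the quotient group scheme $\overline{G(d)}=G(d)/\Delta$ is again smooth, affine and geometrically reductive over $S$, and $[R^\theta(Q,d)/\overline{G(d)}]$ is the stack $\mathcal X^\theta(Q,d)$ of the statement; the stack $[R^\theta(Q,d)/G(d)]$ is a $\G_m$-gerbe over it.

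Next I would match the linearisation. With $\mathscr L=\O_{R(Q,d)}$ carrying the linearisation given by the character $\prod_q\det(g_q)^{-\theta_q}$, the central $\G_m$ acts on $\mathscr L^{\otimes k}$ with weight proportional to $k\theta$, so $(p_*\mathscr L^{\otimes k})^{G(d)}$ is exactly the module of $k\theta$-semi-invariant functions on $R(Q,d)$; hence $\mathcal M^\theta(Q,d)=\Proj_S\bigoplus_{k\geq0}(p_*\mathscr L^{\otimes k})^{G(d)}$ is the projective GIT quotient attached to this linearisation. By \cite{Ki94}, with the sign convention fixed in the notations section, the GIT-semistable locus of this linearisation agrees, fibre by fibre over $S$, with the open subscheme $R^\theta(Q,d)$ of $\theta$-semistable representations, and this locus is open and compatible with base change, so it determines the open substack $\mathcal X^\theta(Q,d)$. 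Feeding all of this into the GIT theorem of \cite{Al14} gives that $q$ is an adequate moduli space, the adequacy rather than goodness being exactly what one expects over $\Spec\Z$, where the fibres in positive characteristic force the weaker notion.

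The step I expect to be the main obstacle is the compatibility check: that the GIT-semistable locus, formed over $S$ via the Hilbert--Mumford criterion fibrewise as in \cite{Ki94}, is both open and stable under base change and therefore glues to $R^\theta(Q,d)$, and relatedly that the adequate moduli space property for $[R^\theta(Q,d)/G(d)]$ descends along the $\G_m$-gerbe to $\mathcal X^\theta(Q,d)=[R^\theta(Q,d)/\overline{G(d)}]$. These are precisely the points where one must use that the machinery of \cite{Al14} is available over arbitrary locally noetherian base schemes rather than only over a field.
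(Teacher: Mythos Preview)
Your proposal is correct and matches the paper's approach: the paper gives no proof at all for this theorem, merely stating that it ``is a special case of \cite{Al14}'' and citing that reference. Your write-up is a more detailed version of the same strategy, spelling out why the data $(R(Q,d),G(d),\mathscr L,\theta)$ satisfies the hypotheses of Alper's GIT theorem (affine finite-type action of a geometrically reductive group scheme with a chosen linearisation), which the paper leaves implicit.
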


In particular, we have the following properties (see \cite{Al14}, cf.\ also \cite{Ses77}):
\begin{enumerate}[\rm(1)]
\item The homomorphism $\O_{\mathcal M^\theta(Q,d)}\to q_*\O_{\mathcal X^\theta(Q,d)}$
is an isomorphism.
\item $q$ is surjective, universally closed and universally submersive.
\item For an algebraically closed field $k$ the morphism $q$ induces a bijection
\[
[\mathcal X^\theta(Q,d)(k)]\,/\!\sim\;\;\to\;\;\mathcal M^\theta(Q,d)(k)
\]
where $\sim$ is the equivalence relation on the set of isomorphism classes
$[\mathcal X^\theta(Q,d)(k)]$ defined by $x_1\sim x_2$ if 
$\overline{\{x_1\}}\cap\overline{\{x_2\}}\neq\emptyset$ 
in $\mathcal X^\theta(Q,d)_s$ for $x_1,x_2\in\mathcal X^\theta(Q,d)(k)$ 
over $s\colon\Spec k\to S$.
\item Assume that $S$ is locally noetherian. Then $\mathcal M^\theta(Q,d)$ is of 
finite type over $S$ and $q_*\mathscr F$ is coherent for every coherent 
$\O_{\mathcal X^\theta(Q,d)}$-module $\mathscr F$.
\item $q$ is universal for morphisms to schemes, i.e.\ for an $S$-scheme $Z$ 
the map
\[
\Mor_S(\mathcal M^\theta(Q,d),Z)\to\Mor_S(\mathcal X^\theta(Q,d),Z),
\quad f\mapsto f\circ q
\]
is a bijection.
\item Let $Y\to\mathcal M^\theta(Q,d)$ be a flat morphism of $S$-schemes.
Then $\mathcal X^\theta(Q,d)\times_{\mathcal M^\theta(Q,d)}Y\to Y$
is an adequate moduli space.
\end{enumerate}

Statement (5) means that $R^\theta(Q,d)\to\mathcal M^\theta(Q,d)$ is a 
categorical quotient. Restricting to the open subscheme of stable points 
$R^\textit{$\theta$-s}(Q,d)\subseteq R^\theta(Q,d)$ there is the following 
result corresponding to the notion of geometric quotient.

\begin{cor} 
Let $\mathcal X^\textit{$\theta$-s}(Q,d)\!=\![R^\textit{$\theta$-s}(Q,d)/\overline{G(d)}]$
and $\mathcal M^\textit{$\theta$-s}(Q,d)$ the image of
$\mathcal X^\textit{$\theta$-s}(Q,d)$ in $\mathcal M^\theta(Q,d)$. 
Then $\mathcal X^\textit{$\theta$-s}(Q,d)\to\mathcal M^\textit{$\theta$-s}(Q,d)$
is an adequate moduli space. For algebraically closed fields $k$ the map 
$[\mathcal X^\textit{$\theta$-s}(Q,d)(k)]\to\mathcal M^\textit{$\theta$-s}(Q,d)(k)$ 
is a bijection, where $[\mathcal X^\textit{$\theta$-s}(Q,d)(k)]$ denotes the set of 
isomorphism classes of objects in $\mathcal X^\textit{$\theta$-s}(Q,d)(k)$. 
\end{cor}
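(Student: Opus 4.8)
The plan is to derive everything from Theorem~\ref{thm:adequatemodulispace} and properties (1)--(6) above, using as input the classical description of $\theta$-(semi)stable representations over an algebraically closed field from \cite{Ki94}.

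The first and main step is to show that the stable locus is \emph{saturated} for $q$, i.e.\ that the open substack $\mathcal X^\textit{$\theta$-s}(Q,d)$ equals the $q$-preimage of its image $\mathcal M^\textit{$\theta$-s}(Q,d)$ in $\mathcal M^\theta(Q,d)$; equivalently, that a $\theta$-stable and a properly $\theta$-semistable representation over an algebraically closed field $k$ never have the same image in $\mathcal M^\theta(Q,d)$. By property (3) this amounts to a statement about the equivalence relation $\sim$ defined there. If $x$ is $\theta$-stable, then by \cite{Ki94} the endomorphism ring of the corresponding representation is $k$, so its $\overline{G(d)}$-orbit is closed in $R^\theta(Q,d)_s$ and of maximal dimension $\dim\overline{G(d)}$; hence the point of $x$ is closed in $\mathcal X^\theta(Q,d)_s$, and if the orbit of a $\theta$-semistable representation $W$ contained the orbit of $x$ in its closure, the fact that orbit boundaries have strictly smaller dimension forces the two orbits to coincide. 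Thus $x\sim y$ with $x$ stable implies $y\cong x$, in particular $y$ is stable.

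Granting saturatedness, $\mathcal M^\textit{$\theta$-s}(Q,d)$ is open in $\mathcal M^\theta(Q,d)$: it is the complement of the image of the closed substack $\mathcal X^\theta(Q,d)\setminus\mathcal X^\textit{$\theta$-s}(Q,d)$, which is closed because $q$ is universally closed (property (2)). The open immersion $\mathcal M^\textit{$\theta$-s}(Q,d)\hookrightarrow\mathcal M^\theta(Q,d)$ is flat, so by property (6) the morphism $\mathcal X^\theta(Q,d)\times_{\mathcal M^\theta(Q,d)}\mathcal M^\textit{$\theta$-s}(Q,d)\to\mathcal M^\textit{$\theta$-s}(Q,d)$ is an adequate moduli space; by saturatedness the fibre product on the left is $\mathcal X^\textit{$\theta$-s}(Q,d)$, which proves the first assertion.

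For the statement on $k$-points, restrict the bijection of property (3) to the stable locus. It is surjective: a $k$-point of $\mathcal M^\textit{$\theta$-s}(Q,d)$ comes from some $x\in\mathcal X^\theta(Q,d)(k)$ by (3), and $x$ then factors through $q^{-1}(\mathcal M^\textit{$\theta$-s}(Q,d))=\mathcal X^\textit{$\theta$-s}(Q,d)$. It is injective because, as seen in the first step, the relation $\sim$ is trivial on isomorphism classes of $\theta$-stable objects: their points are closed in $\mathcal X^\theta(Q,d)_s$, so $\overline{\{x_1\}}\cap\overline{\{x_2\}}\neq\emptyset$ forces $x_1=x_2$. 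The only step needing care is saturatedness, i.e.\ transporting the orbit-closedness and dimension facts of \cite{Ki94} through the description in property (3); everything else is formal.
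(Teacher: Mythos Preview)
Your argument is correct and is precisely the standard way one extracts such a statement from the adequate-moduli-space formalism. The paper itself does not give a proof of this corollary: it is stated immediately after Theorem~\ref{thm:adequatemodulispace} and the list of properties (1)--(6), with only the introductory sentence ``Restricting to the open subscheme of stable points \ldots\ there is the following result corresponding to the notion of geometric quotient'', implicitly deferring to \cite{Al14} and \cite{Ses77}. So you have simply written out what the paper leaves as an exercise in the GIT formalism, and your decomposition (saturatedness of the stable locus $\Rightarrow$ openness of $\mathcal M^\textit{$\theta$-s}$ via (2) $\Rightarrow$ flat base change via (6) $\Rightarrow$ bijection via (3)) is exactly the intended route.

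One small point of presentation: the implication ``$\End(V)=k$, hence the orbit is closed in $R^\theta(Q,d)_s$'' is a little telegraphic. What you are really using is that a $\theta$-stable representation is $\theta$-polystable, and the polystable orbit is the unique closed orbit in its S-equivalence class (equivalently, stable points have closed orbits in the semistable locus, as in \cite{MFK} or \cite{Ki94}). The dimension argument you give then rules out any other orbit specialising to it. This is all standard, but since the paper works over an arbitrary base $S$ and only passes to geometric fibres for statements like (3), it is worth making explicit that your orbit-closure argument takes place entirely in the fibre $R^\theta(Q,d)_s$ over a fixed $s\colon\Spec k\to S$, which you do indicate.
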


In addition to these standard GIT results, in our situation we have:

\begin{prop}\label{prop:Rs-Mtheta-torsor}
The operation of $\overline{G(d)}$ on $R^\textit{$\theta$-s}(Q,d)$ is free
and the morphism\linebreak
$R^\textit{$\theta$-s}(Q,d)\to\mathcal M^\textit{$\theta$-s}(Q,d)$ 
is a $\overline{G(d)}$-torsor in the fppf topology.
\end{prop}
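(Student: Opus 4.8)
The plan is to reduce the statement to a pointwise (fibrewise) computation plus a faithfully-flat-descent argument. First I would show that the action of $\overline{G(d)}$ on $R^\textit{$\theta$-s}(Q,d)$ is \emph{scheme-theoretically free}, i.e.\ the action morphism
\[
\Psi\colon \overline{G(d)}\times_S R^\textit{$\theta$-s}(Q,d)\longrightarrow R^\textit{$\theta$-s}(Q,d)\times_S R^\textit{$\theta$-s}(Q,d),\qquad (g,v)\mapsto (gv,v)
\]
is a monomorphism. Since everything is of finite presentation over $S$, it suffices to check that $\Psi$ induces injections on $T$-points for all $S$-schemes $T$, and by a standard reduction one checks this on geometric points: if $g\in\overline{G(d)}(k)$ fixes a $\theta$-stable representation $v$ over an algebraically closed field $k$, then a lift $(g_q)_q\in G(d)(k)$ is an automorphism of the representation $V_v$. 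But a $\theta$-stable representation over an algebraically closed field is a \emph{simple} object in the abelian category of representations with $\theta(-)=0$ (this is exactly part (b) of the stability definition together with King's argument), so by Schur's lemma $\End(V_v)=k$, hence $(g_q)_q$ is a scalar, hence $g=1$ in $\overline{G(d)}$. This gives freeness.

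Next I would upgrade "free action" to "the quotient map is an fppf torsor". The geometric-quotient corollary already gives that $\pi\colon R^\textit{$\theta$-s}(Q,d)\to\mathcal M^\textit{$\theta$-s}(Q,d)$ is surjective and that $[\mathcal X^\textit{$\theta$-s}(Q,d)(k)]\to\mathcal M^\textit{$\theta$-s}(Q,d)(k)$ is a bijection for algebraically closed $k$; combined with freeness this says the action groupoid $\overline{G(d)}\times_S R^\textit{$\theta$-s}\rightrightarrows R^\textit{$\theta$-s}$ has $\mathcal M^\textit{$\theta$-s}$ as its (sheaf) quotient and that $\mathcal X^\textit{$\theta$-s}(Q,d)\to\mathcal M^\textit{$\theta$-s}(Q,d)$ is representable, in fact a monomorphism which is also surjective, hence an isomorphism of stacks. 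Equivalently, $\mathcal M^\textit{$\theta$-s}(Q,d)$ is a quotient of $R^\textit{$\theta$-s}(Q,d)$ by a free action, and what remains is to see that $\pi$ is faithfully flat and locally of finite presentation, for then the canonical map $\overline{G(d)}\times_S R^\textit{$\theta$-s}\to R^\textit{$\theta$-s}\times_{\mathcal M^\textit{$\theta$-s}}R^\textit{$\theta$-s}$ is a monomorphism (freeness) which is fppf-locally an isomorphism, hence an isomorphism, which is precisely the torsor condition.

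The one genuinely nontrivial input is flatness of $\pi$. I would obtain it as follows: by property (2)/(4) of the adequate moduli space, $\pi$ is universally closed and of finite type, and by the corollary it is a geometric quotient with $\O_{\mathcal M^\textit{$\theta$-s}}\xrightarrow{\ \sim\ }\pi_*\O$; the source $R^\textit{$\theta$-s}(Q,d)$ is an open subscheme of the affine space $R(Q,d)$, hence regular whenever $S$ is (e.g.\ over $\Spec\Z$), and the geometric fibres of $\pi$ are the $\overline{G(d)}$-orbits, which by freeness are all smooth of the same dimension $\dim\overline{G(d)}$. A morphism of finite type between noetherian schemes whose source is regular (Cohen--Macaulay suffices), whose target is reduced (indeed normal — one shows $\mathcal M^\textit{$\theta$-s}$ is normal since $R^\textit{$\theta$-s}$ is and $\pi$ has geometrically connected reduced fibres), and whose fibres are all of the same dimension, is flat by "miracle flatness"; applying this fibrewise over $S$ and then invoking the fibrewise criterion for flatness over $S$ gives flatness of $\pi$ over $S$. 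The main obstacle is precisely this flatness step over a general (not necessarily regular) base, where one cannot directly cite miracle flatness; there I would instead argue by fppf-descending the problem — since $R^\textit{$\theta$-s}\to\mathcal M^\textit{$\theta$-s}$ admits sections fppf-locally on the target (this follows from surjectivity of $\pi$ together with the fact that $\pi$ is smooth, being a geometric quotient by a smooth group with free action on a smooth-over-$S$ scheme... ) and once a section exists the torsor structure is manifest: a section $\sigma$ over $U\to\mathcal M^\textit{$\theta$-s}$ trivialises the pulled-back action, giving $\overline{G(d)}\times_S U\xrightarrow{\sim}R^\textit{$\theta$-s}\times_{\mathcal M^\textit{$\theta$-s}}U$ via $(g,u)\mapsto(g\cdot\sigma(u),u)$, which is an isomorphism exactly because the action is free. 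Thus the torsor statement follows once one produces fppf-local sections of $\pi$, and that is the heart of the proof.
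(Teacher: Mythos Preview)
Your argument has two genuine gaps, both of which the paper addresses by a different route.

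\textbf{Freeness.} Showing that $\Psi$ is injective on geometric points via Schur's lemma is not enough to conclude that $\Psi$ is a monomorphism of schemes, let alone a closed immersion (which is what ``free'' means here and what is needed later). Injectivity on geometric points does not rule out non-reduced stabilisers, and a surjective monomorphism of schemes need not be an isomorphism (think of Frobenius). The paper works scheme-theoretically: for any $Y$ and any $v\in R^{\textit{$\theta$-s}}(Y)$ it identifies $\St(v)$ with $\UAut_Y(V)$, realises $\UEnd_Y(V)$ as $\A_Y(\mathscr C)$ for an explicit cokernel sheaf, and then uses the fibrewise Schur computation to see that all geometric fibres of $\UEnd_Y(V)$ are $1$-dimensional, forcing $\UEnd_Y(V)$ to be an affine line bundle over $Y$ and hence $\St(v)=\Delta_Y$ as \emph{schemes}. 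More importantly, the paper then proves that $\Psi$ is \emph{proper} by a direct valuative-criterion argument (diagonalising the matrices over a DVR and using $\theta$-stability of both central fibres to force the transporter into $\overline{G(d)}(A)$). Proper $+$ quasifinite gives finite, and a finite monomorphism is a closed immersion. This properness step is the substantive content you are missing.

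\textbf{Flatness / torsor.} Your two proposed routes to flatness of $\pi$ both fail as written. Miracle flatness requires the \emph{target} $\mathcal M^{\textit{$\theta$-s}}$ to be regular, which you cannot assume over a general base $S$ (and proving it is essentially as hard as the proposition). Your alternative, producing fppf-local sections by arguing that $\pi$ is smooth ``being a geometric quotient by a smooth group with free action on a smooth-over-$S$ scheme'', is circular: smoothness of a quotient map of this kind is exactly what you are trying to establish, and there is no general principle that a geometric quotient by a smooth group acting freely is automatically smooth or even flat without further input. The paper instead invokes a theorem of Anantharaman: once the action is free in the strong sense (closed immersion $\Psi$) and a geometric quotient exists, the canonical map from the fppf sheaf quotient to the scheme quotient is an isomorphism and $\pi$ is fppf. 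The torsor property then follows formally. So the logical flow in the paper is: properness of the action (valuative criterion) $\Rightarrow$ $\Psi$ closed immersion $\Rightarrow$ Anantharaman $\Rightarrow$ $\pi$ fppf $\Rightarrow$ torsor. The properness argument is the step you need to supply.
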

\begin{proof}
We first show that the operation of $\overline{G(d)}$ has trivial stabilisers.
For an $S$-scheme $Y$ the stabiliser $\St(v)\subseteq G(d)_Y$ of an element 
$v\in R^\textit{$\theta$-s}(Q,d)(Y)$ contains the diagonal group scheme 
$\Delta_Y\subseteq G(d)_Y$ and coincides with the automorphism group scheme 
$\UAut_Y(V)$ of the corresponding free representation $V=((V_q)_q,(\nu_\alpha)_\alpha)$ 
of $(Q,d)$ over $Y$.
The automorphism group scheme $\UAut_Y(V)$ is open dense in the endomorphism
scheme $\UEnd_Y(V)$. As the $\O_Y$-module $\SEnd_Y(V)$ is the kernel of 
the homomorphism of locally free sheaves
$\bigoplus_q\SEnd_Y(V_q)\to\bigoplus_\alpha\SHom_Y(V_{s(\alpha)},V_{t(\alpha)})$, 
$(\phi_q)_q\mapsto(\nu_\alpha\phi_{s(\alpha)}-\phi_{t(\alpha)}\nu_\alpha)_\alpha$,
we have $\UEnd_Y(V)=\A_Y(\mathscr C)$ with 
$\mathscr C=\coker\big(\bigoplus_\alpha\SHom_Y(V_{s(\alpha)},V_{t(\alpha)})^\vee
\!\to\bigoplus_q\SEnd_Y(V_q)^\vee\big)$.
Since $v$ is stable, all geometric fibres of $\St(v)=\UAut_Y(V)$ are of dimension $1$
(containing the diagonal group $\Delta$), therefore all geometric fibres 
of $\UEnd_Y(V)$ are $1$-dimensional affine spaces, so $\UEnd_Y(V)$ is an 
affine line bundle over $Y$.
It follows that $\St(v)=\UAut_Y(V)=\Delta_Y$.

We show that the operation is proper, that is 
$\Psi\colon\overline{G(d)}\times R^\textit{$\theta$-s}(Q,d)\to 
R^\textit{$\theta$-s}(Q,d)\times R^\textit{$\theta$-s}(Q,d)$, $(g,v)\mapsto(gv,v)$  
is a proper morphism, using the valuative criterion \cite[II, (7.3.8)]{EGA}.
We can assume we have the base scheme $S=\Spec\Z$. Let $A$ be a discrete valuation
ring with maximal ideal $\mathfrak m$, field of fractions $K$ and $k=A/\mathfrak m$,
and let $t$ be a generator of $\mathfrak m$.
Given a morphism $\Spec A\to R^\textit{$\theta$-s}(Q,d)\times R^\textit{$\theta$-s}(Q,d)$ 
we have to show that the map
$\Mor_{R^\textit{$\theta$-s}(Q,d)\times R^\textit{$\theta$-s}(Q,d)}
(\Spec A,\overline{G(d)}\times R^\textit{$\theta$-s}(Q,d))
\to\Mor_{R^\textit{$\theta$-s}(Q,d)\times R^\textit{$\theta$-s}(Q,d)}
(\Spec K,\overline{G(d)}\times R^\textit{$\theta$-s}(Q,d))$
coming from the canonical homomorphism $A\to K$
is surjective (it is injective because the morphism is separated).
The morphism $\Spec A\to R^\textit{$\theta$-s}(Q,d)\times R^\textit{$\theta$-s}(Q,d)$ 
corresponds to a pair of representations $V'=((V'_q)_q,(\nu'_\alpha)_\alpha)$, 
$V=((V_q)_q,(\nu_\alpha)_\alpha)$ of $(Q,d)$ over $\Spec A$ and 
a morphism $\Spec K\to\overline{G(d)}\times R^\textit{$\theta$-s}(Q,d)$ 
making the diagram arising from the above maps (the usual diagram when applying 
the valuative criterion) commutative corresponds to a pair $(g,V'')$ such that 
$V''=V_K$, $gV_K=V'_K$.
The element $g\in\overline{G(d)}(K)$ corresponds to a family of matrices 
with coefficients in $K$ up to a common factor in $K^*$,
so we can represent it by a family of matrices $(M_q)_q$ with coefficients 
in $A$ and choose bases of $V_q$ and $V'_q$ such that all $M_q$ are diagonal
with diagonal entries of the form $t^e$.
Let $l$ be the minimal integer such that $t^l$ occurs as a diagonal entry of some $M_q$.
Considering the matrices for the homomorphisms $\nu_\alpha,\nu'_\alpha$
with respect to the chosen bases, the $(i,j)$-entries for the two matrices for 
a given $\alpha$ differ by a factor $t^e\neq 1$ if the diagonal entries for the 
corresponding pairs of basis elements differ by such a factor. 
In this case modulo $\mathfrak m$ one of the two $(i,j)$-entries vanishes.
Let $W'_q\subseteq V'_q$ be the submodules generated by those basis elements 
which correspond to the diagonal entries of the form $t^l$, and similary $(U_q)_q$ 
the submodules of $(V_q)_q$ generated by the basis elements corresponding
to the remaining diagonal entries.
Then central fibres of the modules $W'_q\subseteq V'_q$ form a 
subrepresentation $W'_k\subseteq V'_k$ and the central fibres of the modules 
$U_q\subseteq V_q$ form a subrepresentation $U_k\subset V_k$. 
If these were proper nonzero subrepresentations, then, because $V_k$, $V'_k$ are 
$\theta$-stable, both subrepresentations would satisfy $\theta(W'_k)<0$, 
$\theta(U_k)<0$, which is impossible. 
Therefore $W'_k=V'_k$, $U_k=0$. It follows that the element $g\in\overline{G(d)}(K)$
can be represented by matrices with coefficients in $A\setminus\mathfrak m$.
These matrices are invertible as matrices over $A$ and form the required 
element in $\overline{G(d)}(A)$.

Being proper and quasifinite, the morphism $\Psi\colon\overline{G(d)}\times 
R^\textit{$\theta$-s}(Q,d)\to R^\textit{$\theta$-s}(Q,d)\times R^\textit{$\theta$-s}(Q,d)$ 
is finite by \cite[III (1), (4.4.2)]{EGA} or \cite[IV (3), (8.11.1)]{EGA}.
The morphism $\Psi$ is a monomorphism because the operation has trivial stabilisers.
As finite epimorphisms of rings are surjective (see \cite[Ch.\ IV, Prop.\ 1.7]{La69}), 
it follows that $\Psi$ is a closed embedding, i.e.\ the operation
$\overline{G(d)}\times R^\textit{$\theta$-s}(Q,d)\to R^\textit{$\theta$-s}(Q,d)$ is free.

The sheaf-quotient (fppf) $(R^\textit{$\theta$-s}(Q,d)/\overline{G(d)})_{\textit{fppf}}$,
i.e.\ the quotient in the category of fppf-sheaves, is the sheaf associated 
to the presheaf $Y\mapsto R^\textit{$\theta$-s}(Q,d)(Y)/\overline{G(d)}(Y)$ 
on the fppf-site of $S$-schemes (see e.g.\ \cite[3.4.1]{Br14}). As the operation 
$\overline{G(d)}\times R^\textit{$\theta$-s}(Q,d)\to R^\textit{$\theta$-s}(Q,d)$ 
is free and a geometric quotient
$\pi\colon R^\textit{$\theta$-s}(Q,d)\to R^\textit{$\theta$-s}(Q,d)/\overline{G(d)}$ 
exists, by \cite[Thm.\ 6]{An73} the canonical morphism
$(R^\textit{$\theta$-s}(Q,d)/\overline{G(d)})_{\textit{fppf}}\to 
R^\textit{$\theta$-s}(Q,d)/\overline{G(d)}$ 
is an isomorphism and the morphism $\pi$ is fppf. 
Since the sheaf-quotient 
$(R^\textit{$\theta$-s}(Q,d)/\overline{G(d)})_{\textit{fppf}}$
has the property that $\overline{G(d)}\times R^\textit{$\theta$-s}(Q,d)\to
R^\textit{$\theta$-s}(Q,d)\times_{(R^\textit{$\theta$-s}(Q,d)/
\overline{G(d)})_{\textit{fppf}}}R^\textit{$\theta$-s}(Q,d)$
is an isomorphism (see e.g.\ \cite[3.4.5]{Br14}), it follows that the morphism 
of schemes $R^\textit{$\theta$-s}(Q,d)\to R^\textit{$\theta$-s}(Q,d)/\overline{G(d)}$ 
is a $\overline{G(d)}$-torsor in the fppf topology.
\end{proof}

The tautological $G(d)$-equivariant representation $U_{R^\textit{$\theta$-s}(Q,d)}$
on $R^\textit{$\theta$-s}(Q,d)$ can be made into a $\overline{G(d)}$-equivariant 
representation if and only if the dimension vector $d$ is indivisible, 
i.e.\ the set of integers $d_q$ is coprime, see \cite[Proof of Prop.\ 5.3]{Ki94}.
In this case by descent along the $\overline{G(d)}$-torsor
$R^\textit{$\theta$-s}(Q,d)\to\mathcal M^\textit{$\theta$-s}(Q,d)$ one obtains
a representation $U_{\mathcal M^\textit{$\theta$-s}(Q,d)}$ 
on $\mathcal M^\textit{$\theta$-s}(Q,d)$.

\begin{thm}\label{thm:modulireprquiver}
For an indivisible dimension vector $d=(d_q)_{q\in Q_0}$ the scheme 
$\mathcal M^\textit{$\theta$-s}(Q,d)$ with universal representation 
$U_{\mathcal M^\textit{$\theta$-s}(Q,d)}$ is a fine moduli space of $\theta$-stable 
representations of $(Q,d)$ in the sense that it represents the contravariant 
functor on $S$-schemes
\[
Y\;\mapsto\;
\big\{\textit{$\,\theta$-stable representations of $(Q,d)$ over $Y\,$}\big\}\,/\!\sim
\]
of $\theta$-stable representations up to isomorphism over a Zariski cover.
\end{thm}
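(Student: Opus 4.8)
The plan is to verify that $\mathcal M^{\textit{$\theta$-s}}(Q,d)$ together with $U_{\mathcal M^{\textit{$\theta$-s}}(Q,d)}$ represents the stated functor by exhibiting a natural bijection, for every $S$-scheme $Y$, between $\Mor_S(Y,\mathcal M^{\textit{$\theta$-s}}(Q,d))$ and the set of isomorphism classes of $\theta$-stable representations of $(Q,d)$ over $Y$. Since $R^{\textit{$\theta$-s}}(Q,d)$ represents the functor of $\theta$-stable representations \emph{equipped with a choice of basis of each $V_q$}, the first task is to describe an arbitrary $\theta$-stable representation $V$ over $Y$ as being, fppf-locally on $Y$, of this rigidified form. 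Indeed, choosing an fppf (even Zariski) cover $\{Y_i\to Y\}$ trivialising all the locally free sheaves $V_q$ yields $Y_i$-points of $R^{\textit{$\theta$-s}}(Q,d)$, hence morphisms $Y_i\to R^{\textit{$\theta$-s}}(Q,d)\to\mathcal M^{\textit{$\theta$-s}}(Q,d)$; I would then check that the composite to $\mathcal M^{\textit{$\theta$-s}}(Q,d)$ is independent of the trivialisation and of the cover, using that any two bases differ by an element of $G(d)$ and that $R^{\textit{$\theta$-s}}(Q,d)\to\mathcal M^{\textit{$\theta$-s}}(Q,d)$ is $\overline{G(d)}$-invariant (Proposition \ref{prop:Rs-Mtheta-torsor}), so the local morphisms glue to a well-defined $\psi_V\colon Y\to\mathcal M^{\textit{$\theta$-s}}(Q,d)$.

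For surjectivity of the assignment $V\mapsto\psi_V$ and the reconstruction of $V$ from a given morphism $f\colon Y\to\mathcal M^{\textit{$\theta$-s}}(Q,d)$, I would pull back the $\overline{G(d)}$-torsor $\pi\colon R^{\textit{$\theta$-s}}(Q,d)\to\mathcal M^{\textit{$\theta$-s}}(Q,d)$ along $f$, obtaining an fppf $\overline{G(d)}$-torsor $P\to Y$ with a $\overline{G(d)}$-equivariant map $P\to R^{\textit{$\theta$-s}}(Q,d)$; since $d$ is indivisible, the tautological representation on $R^{\textit{$\theta$-s}}(Q,d)$ descends to a $\overline{G(d)}$-equivariant object, hence descends further along $P\to Y$ to a representation $V_f$ of $(Q,d)$ over $Y$ — this is exactly the pullback $f^*U_{\mathcal M^{\textit{$\theta$-s}}(Q,d)}$. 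Stability of $V_f$ is fppf-local and holds after pullback to $P$, where it follows because the map to $R^{\textit{$\theta$-s}}(Q,d)$ lands in the $\theta$-stable locus. One then checks the two constructions are mutually inverse up to isomorphism: $\psi_{V_f}=f$ because over the cover $P$ both are computed from the same $R^{\textit{$\theta$-s}}(Q,d)$-point, and $V_{\psi_V}\cong V$ by descent, comparing the two torsors over $Y$ via the chosen trivialisation.

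The main obstacle I anticipate is not the bijection itself but the role of indivisibility: one must show that the universal object genuinely descends from $R^{\textit{$\theta$-s}}(Q,d)$ to $\mathcal M^{\textit{$\theta$-s}}(Q,d)$, i.e.\ that the obstruction living in the diagonal $\Delta\cong\G_m$ sitting inside every stabiliser (computed in Proposition \ref{prop:Rs-Mtheta-torsor}) is killed precisely when the $d_q$ are coprime — this is where the cited result of \cite{Ki94} enters, and I would reproduce the short argument that a $\Z$-linear combination of the $d_q$ equal to $1$ produces the required trivialisation of the $\G_m$-gerbe by twisting the tautological representation by an appropriate tensor power of the determinant line. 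The remaining points — that $\theta$-stability is preserved under the fppf descent in both directions, and that the gluing of local morphisms $Y_i\to\mathcal M^{\textit{$\theta$-s}}(Q,d)$ is legitimate — are formal consequences of $R^{\textit{$\theta$-s}}(Q,d)\to\mathcal M^{\textit{$\theta$-s}}(Q,d)$ being a $\overline{G(d)}$-torsor and of faithfully flat descent for quasi-coherent sheaves, so I would treat them briefly.
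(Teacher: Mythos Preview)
Your proposal is correct and follows essentially the same approach as the paper's proof: both directions of the bijection are constructed exactly as you describe (locally trivialise $V$ to get a morphism to $R^{\textit{$\theta$-s}}(Q,d)$, compose with the quotient; conversely pull back the universal representation), and the mutual inverse check is carried out by passing to an fppf cover on which the pulled-back $\overline{G(d)}$-torsor trivialises. The paper cites \cite{Ki94} for the indivisibility step rather than spelling out the determinant-twist argument, but otherwise the structure is the same.
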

\begin{proof}
We denote the above functor $F^\textit{$\theta$-s}(Q,d)$ and 
$q\colon R^\textit{$\theta$-s}(Q,d)\to\mathcal M^\textit{$\theta$-s}(Q,d)$
the quotient morphism.
We have the morphisms $\mathcal M^\textit{$\theta$-s}(Q,d)\to 
F^\textit{$\theta$-s}(Q,d)$, $(f\colon Y\to\mathcal M^\textit{$\theta$-s}(Q,d))
\linebreak\mapsto f^*U_{\mathcal M^\textit{$\theta$-s}(Q,d)}$ 
and $F^\textit{$\theta$-s}(Q,d)\to\mathcal M^\textit{$\theta$-s}(Q,d)$,
$V\mapsto f_V$, where $f_V$ is locally defined as $f_V=q\circ\tilde{f}_V$ 
for free $V$ and $\tilde{f}_V\colon Y\to R^\textit{$\theta$-s}(Q,d)$ is determined
by $V$ and a choice of basis of $V$, using that $R^\textit{$\theta$-s}(Q,d)$ with
$U_{R^\textit{$\theta$-s}(Q,d)}$ represents the functor of $\theta$-stable 
free representations with fixed bases.

To show that the composition 
$F^\textit{$\theta$-s}(Q,d)\to\mathcal M^\textit{$\theta$-s}(Q,d)
\to F^\textit{$\theta$-s}(Q,d)$ is the identity, it is enough to show that 
$f_V^*U_{\mathcal M^\textit{$\theta$-s}(Q,d)}\cong V$ for free representations $V$.
It is $f_V^*U_{\mathcal M^\textit{$\theta$-s}(Q,d)}
=(q\circ \tilde{f}_V)^*U_{\mathcal M^\textit{$\theta$-s}(Q,d)}
\cong\tilde{f}_V^*U_{R^\textit{$\theta$-s}(Q,d)}\cong V$.

To show that the composition 
$\mathcal M^\textit{$\theta$-s}(Q,d)\to F^\textit{$\theta$-s}(Q,d)\to
\mathcal M^\textit{$\theta$-s}(Q,d)$ is the identity,
it suffices to show that $f=f_{f^*U_{\mathcal M^\textit{$\theta$-s}(Q,d)}}$
for $f\colon Y\to\mathcal M^\textit{$\theta$-s}(Q,d)$ such that
$f^*U_{\mathcal M^\textit{$\theta$-s}(Q,d)}$ is free.
Let $p_Y\colon Y'\to Y$ be an fppf morphism such that the $\overline{G(d)}$-torsor 
$R_Y=Y{}_f\!\!\times_q\,R^\textit{$\theta$-s}(Q,d)\to Y$ becomes a trivial torsor 
$R_{Y'}\to Y'$ after base extension by $p_Y$. A section $Y'\to R_{Y'}$ gives a 
morphism $\tilde{f}'\colon Y'\to R^\textit{$\theta$-s}(Q,d)$ which satisfies 
$q\circ\tilde{f}'=f\circ p_Y$.
We have $(\tilde{f}')^*U_{R^\textit{$\theta$-s}(Q,d)}\cong
p_Y^*f^*U_{\mathcal M^\textit{$\theta$-s}(Q,d)}\cong
p_Y^*(\tilde{f}_{f^*U_{\mathcal M^\textit{$\theta$-s}(Q,d)}})^*
U_{R^\textit{$\theta$-s}(Q,d)}$.
Therefore $\tilde{f}'$ coincides with 
$\tilde{f}_{f^*U_{\mathcal M^\textit{$\theta$-s}(Q,d)}}\!\circ\,p_Y$
up to an element of $\overline{G(d)}$ that arises from comparison of
the two bases coming from $U_{R^\textit{$\theta$-s}(Q,d)}$ of these isomorphic 
representations. Composing with $q$, it follows 
$f\circ p_Y=f_{f^*U_{\mathcal M^\textit{$\theta$-s}(Q,d)}}\circ p_Y$,
and thus $f=f_{f^*U_{\mathcal M^\textit{$\theta$-s}(Q,d)}}$ since 
$p_Y$ is an epimorphism.
\end{proof}

The equivalence relation on isomorphism classes of geometric points of 
$\mathcal X^\theta(Q,d)$ in (3) after theorem \ref{thm:adequatemodulispace},
or equivalently expressed as GIT-equivalence on $R^\theta(Q,d)(k)$
defined by $x_1\sim x_2$ if 
$\overline{G(d)_sx_1}\cap\overline{G(d)_sx_2}\neq\emptyset$
in $R^\theta(Q,d)_s$ for $x_1,x_2\in R^\theta(Q,d)(k)$ 
over $s\colon\Spec k\to S$, 
is by \cite{Ki94} equivalent to S-equivalence of semistable representations  
as in the following definition.

\begin{defi}
Two $\theta$-semistable representations $V,V'$ of $Q$ over an algebraically 
closed field are  called {\it S-equivalent} if there are filtrations in the 
category of $\theta$-semistable representations
$0=V_0\subsetneq\ldots\subsetneq V_l=V$ and $0=V'_0\subsetneq\ldots\subsetneq V'_l=V'$ 
with $\theta$-stable quotients such that 
$\bigoplus_iV_{i+1}/V_i\cong\bigoplus_iV'_{i+1}/V'_i$.
\end{defi}

In particular, $\theta$-stable representations are S-equivalent 
if and only if they are isomorphic. Note that the S in S-equivalence doesn't 
refer to the base scheme $S$.

\begin{cor}
Via the morphism $R^\theta(Q,d)\to\mathcal M^\theta(Q,d)$
any $\theta$-semistable representation of $(Q,d)$ over an $S$-scheme $Y$
determines a morphism $Y\to\mathcal M^\theta(Q,d)$ over $S$.
This way for a geometric point $s\colon\Spec k\to S$ the geometric points 
$\mathcal M^\theta(Q,d)_s(k)$ over $s$ are in bijection with 
the S-equivalence classes of $\theta$-semistable representations
of $(Q,d)$ over $k$.
\end{cor}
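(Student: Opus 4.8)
The plan is to build the morphism from local trivialisations and then read off the bijection on geometric points from property~(3) after Theorem~\ref{thm:adequatemodulispace}. For the first assertion: in any representation the sheaves $V_q$ are locally free of finite rank, so there is a Zariski cover $Y=\bigcup_i Y_i$ on which all $V_q|_{Y_i}$ are free; a choice of bases turns $V|_{Y_i}$ into a free $\theta$-semistable representation with fixed basis, hence -- since $R^\theta(Q,d)$ represents that functor -- yields a morphism $\tilde f_i\colon Y_i\to R^\theta(Q,d)$, and I compose with the quotient $q\colon R^\theta(Q,d)\to\mathcal M^\theta(Q,d)$ to obtain $f_i\colon Y_i\to\mathcal M^\theta(Q,d)$. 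On an overlap $Y_i\cap Y_j$ the morphisms $\tilde f_i,\tilde f_j$ differ by the change-of-basis element of $G(d)(Y_i\cap Y_j)$, and since $q$ factors through the quotient stack $\mathcal X^\theta(Q,d)=[R^\theta(Q,d)/\overline{G(d)}]$ it is $G(d)$-invariant, so $f_i=f_j$ there; the $f_i$ therefore glue (using that $\mathcal M^\theta(Q,d)$ is a Zariski sheaf) to a morphism $f_V\colon Y\to\mathcal M^\theta(Q,d)$, and comparing two systems of local bases shows $f_V$ depends only on the isomorphism class of $V$. Conceptually this is the composite $Y\to\mathcal X^\theta(Q,d)\xrightarrow{\,q\,}\mathcal M^\theta(Q,d)$ of the classifying morphism of $V$ with $q$, and I would probably present it that way while keeping the local-bases description as the concrete verification.

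For the bijection I would fix $s\colon\Spec k\to S$ with $k$ algebraically closed and chain together three identifications. By~(3) after Theorem~\ref{thm:adequatemodulispace}, $\mathcal M^\theta(Q,d)_s(k)$ is in bijection with the set of isomorphism classes of objects of $\mathcal X^\theta(Q,d)_s(k)$ modulo the relation identifying points whose closures meet in $\mathcal X^\theta(Q,d)_s$. Next, over the algebraically closed field $k$ every $\overline{G(d)}$-torsor is trivial, so an object of $[R^\theta(Q,d)/\overline{G(d)}](k)$ lying over $s$ is just a point of $R^\theta(Q,d)_s(k)$, two such are isomorphic iff they lie in the same $\overline{G(d)}(k)$-orbit, and since the diagonal $\Delta\cong\G_m$ acts trivially this is the same as a $G(d)(k)$-orbit; by the modular description of $R^\theta(Q,d)$ -- together with the fact that every representation over a field is free -- this means precisely that the associated $\theta$-semistable representations of $(Q,d)$ over $k$ are isomorphic. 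Finally, under this identification the relation from~(3) becomes the GIT-equivalence $\overline{G(d)_sx_1}\cap\overline{G(d)_sx_2}\neq\emptyset$ recalled before the definition of $S$-equivalence, which by \cite{Ki94} coincides with $S$-equivalence of $\theta$-semistable representations. Composing the three bijections gives the statement, and unwinding the constructions shows the resulting bijection is the one induced by $V\mapsto f_V$.

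The gluing in the first assertion is routine -- only $G(d)$-invariance of $q$ is used. The point that deserves care is the passage through the rigidified stack $[R^\theta(Q,d)/\overline{G(d)}]$ rather than $[R^\theta(Q,d)/G(d)]$: one must check that replacing $G(d)$ by $\overline{G(d)}$ changes neither the set of orbits on $R^\theta(Q,d)(k)$ (it does not, since $\Delta$ acts trivially) nor the GIT-equivalence, so that the bijection of~(3) indeed translates into $S$-equivalence classes of representations. The identification of GIT-equivalence with $S$-equivalence itself is imported from \cite{Ki94} and the discussion immediately preceding the definition above, so no new argument is needed there.
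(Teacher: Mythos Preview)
The paper states this corollary without proof; it is meant to follow immediately from property~(3) after Theorem~\ref{thm:adequatemodulispace} together with the remark, recorded just before the definition of $S$-equivalence, that GIT-equivalence on $R^\theta(Q,d)(k)$ coincides with $S$-equivalence by \cite{Ki94}. Your argument spells out exactly this implicit reasoning---local trivialisation plus $G(d)$-invariance of $q$ for the first assertion, and the chain (3) $\leadsto$ orbits $\leadsto$ isomorphism classes $\leadsto$ $S$-equivalence for the second---and is correct.
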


\subsection{Decomposition of the weight space and limits of moduli spaces of quiver representations}
\label{subsec:weightspace-limitfunctor}

In the case of GIT quotients of projective varieties $X$ over an algebraically closed 
field the chamber structure of the space $\NS^G(X)_\Q,$ where $\NS^G(X)$ is the 
N\'eron-Severi group of $G$-equivariant line bundles, has been described in 
\cite{Th96}, \cite{DH98}, \cite{Re00}, and in the case $X=R(Q,d)$ of quivers 
(without oriented cycles) in \cite{HP02}, \cite{Ch08}. 

\medskip

Let $Q$ be a finite connected quiver, $d=(d_q)_{q\in Q_0}$ a dimension vector, 
$S$ be a scheme and $R(Q,d)$ the representation space over $S$. We have the space 
$\Q^{Q_0}$ of fractional linearisations of the line bundle $\O_{R(Q,d)}$. 

\begin{defi} We define the {\it weight space}
\[\textstyle
H(Q,d)=\big\{\theta\in\Q^{Q_0}\:|\:\theta(d)=0\big\}
\]
and
\[
C(Q,d)=\big\{\theta\in H(Q,d)\:|\:R^\theta(Q,d)\neq\emptyset\big\}.
\]
\end{defi}

For the finitely many dimension vectors $0<d'<d$ we define
\[
\begin{array}{l}
H_{d'}=\{\theta\in H(Q,d)\:|\:\theta(d')=0\},\\
H^{<0}_{d'}=\{\theta\in H(Q,d)\:|\:\theta(d')<0\},\\
H^{\leq0}_{d'}=\{\theta\in H(Q,d)\:|\:\theta(d')\leq0\}.\\
\end{array}
\]

\begin{rem}\label{rem:CQd}
Let the base scheme be $S=\Spec k$, $k$ an algebraically closed field.\\
(1) Existence of subrepresentations of a given dimension vector is a closed condition 
on $R(Q,d)$, see \cite[Lemma 3.1]{Scho92}.
We call a dimension vector $0<d'<d$ a generic dimension vector if there is 
a non-empty open subscheme of $R(Q,d)$ of representations that have a 
subrepresentation of dimension vector $d'$, or equivalently, all representations
of dimension vector $d$ have a subrepresentation of dimension vector $d'$.
There is a dense open subset in $R(Q,d)$ of representations which only have
subrepresentations of generic dimension vectors.\\
(2) The set of weights that allow (semi)stable representations is determined by 
the generic dimension vectors:
the intersection $\bigcap_{\textit{$d'\!\!$ generic}}H^{<0}_{d'}$ is the set of 
weights $\theta\in H(Q,d)$ such that $\theta$-stable representations exist, and
\[
C(Q,d)\;\;=\bigcap_{\textit{$d'\!\!$ generic}}\!\!\!H^{\leq0}_{d'}
\] 
Thus $C(Q,d)$ is a convex polyhedral cone in $H(Q,d)$, and it is full-dimensional 
if there is a $\theta\in H(Q,d)$ such that $\theta$-stable representations exist.
If the quiver $Q$ has no oriented cycles then $C(Q,d)$ is strongly convex.\\
(3) By \cite[Cor.\ 1]{Cr96}, improving the results of \cite{Scho92} (see Thm.\ 3.3, 
Thm.\ 5.4 and the text following the proof of Thm.\ 5.4 in \cite{Scho92}), the 
generic dimension vectors, and thus the cone $C(Q,d)$, are combinatorially determined.   
\end{rem}

\begin{defi}
Weights $\theta,\theta'\in H(Q,d)$ are called {\it GIT equivalent} if 
\[R^\theta(Q,d)=R^{\theta'}(Q,d).\]
We denote the {\it GIT equivalence class} of $\theta$ by 
\[C_\theta=\{\theta'\in H(Q,d)\:|\:R^\theta(Q,d)=R^{\theta'}(Q,d)\}\]
and denote
\[\overline{C}_\theta=\{\theta'\in H(Q,d)\:|\:R^\theta(Q,d)\subseteq R^{\theta'}(Q,d)\}.\]
\end{defi}

\medskip

For the standard results in the following proposition see \cite{Ch08} (following
\cite{Re00}; proven for quivers without oriented cycles, but the results and 
methods of proof also apply to quivers with oriented cycles).

\begin{prop}\label{prop:GITclasses}
Let $S=\Spec k$, $k$ an algebraically closed field.
For $\theta\in C(Q,d)$ the set $\overline{C}_\theta$ is a convex polyhedral
cone and $\relint(\overline{C}_\theta)=C_\theta$. 
The cones $\overline{C}_\theta$ for $\theta\in C(Q,d)$ form a finite 
fan with support $C(Q,d)$.
\end{prop}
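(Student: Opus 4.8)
The plan is to establish three things: that $\overline{C}_\theta$ is a convex polyhedral cone, that its relative interior equals $C_\theta$, and that the collection of these cones forms a finite fan covering $C(Q,d)$. First I would use the numerical characterisation of semistability: for $\theta\in H(Q,d)$, a representation $V$ of dimension vector $d$ is $\theta$-semistable iff $\theta(V')\le 0$ for all subrepresentations $V'\subseteq V$. Fixing $\theta_0\in C(Q,d)$, the set $R^{\theta_0}(Q,d)\subseteq R(Q,d)$ consists of those points whose representation is $\theta_0$-semistable. The condition $R^{\theta_0}\subseteq R^{\theta'}$ says precisely that every $\theta_0$-semistable representation is also $\theta'$-semistable. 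Since $R(Q,d)$ is Noetherian and the locus of points admitting a subrepresentation of a fixed dimension vector $d'$ is closed (as noted in the preceding remark, using properness of the $\overline{G(d)}$-action from Proposition \ref{prop:Rs-Mtheta-torsor}), among the finitely many $d'$ with $0<d'<d$ there is a finite subset $D(\theta_0)$ of dimension vectors $d'$ that actually occur as subrepresentation dimensions of $\theta_0$-semistable representations. Then $\overline{C}_\theta = \{\theta'\in H(Q,d)\mid \theta'(d')\le 0 \text{ for all } d'\in D(\theta_0)\}$, a finite intersection of closed half-spaces inside the linear space $H(Q,d)$, hence a convex polyhedral cone.

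Next, to identify the relative interior: I would show $\relint(\overline{C}_\theta)$ is exactly the set of $\theta'$ satisfying $\theta'(d')<0$ strictly for all $d'\in D(\theta_0)$ that are not forced to be equalities, i.e.\ those $d'$ for which $\theta_0(d')<0$ already. Indeed $\theta_0\in\overline{C}_\theta$ and the half-spaces for which $\theta_0$ lies on the boundary hyperplane $H_{d'}$ contain $\theta_0$ on their boundary, so they cut out a face; the half-spaces strict at $\theta_0$ stay strict on a neighbourhood. Conversely, if $\theta'$ satisfies exactly the same equalities/strict inequalities among the $\theta'(d')$ as $\theta_0$ does, one checks a $\theta_0$-semistable $V$ is $\theta'$-semistable and vice versa — for a subrepresentation $V'$, its dimension vector lies in the subsemigroup generated by the $d'\in D(\theta_0)$, and $\theta_0(V')\le 0$ with equality iff $\theta'(V')=0$ — so $R^{\theta'}=R^{\theta_0}$, i.e.\ $\theta'\in C_\theta$. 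This gives $C_\theta=\relint(\overline{C}_\theta)$.

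Finally, for the fan structure I would argue that as $\theta_0$ ranges over $C(Q,d)$ only finitely many sets $D(\theta_0)$ occur (subsets of the finite set of $d'$ with $0<d'<d$), hence only finitely many cones $\overline{C}_\theta$; they cover $C(Q,d)$ since every $\theta_0\in C(Q,d)$ lies in $C_{\theta_0}\subseteq\overline{C}_{\theta_0}$. For the fan axioms: a face of $\overline{C}_\theta$ is obtained by imposing additional equalities $\theta'(d')=0$ for a subset of the $d'\in D(\theta_0)$; I would check that such a face is again of the form $\overline{C}_{\theta_1}$ for $\theta_1$ in that face with $R^{\theta_1}$ maximal among such, and that the intersection of two cones $\overline{C}_\theta\cap\overline{C}_{\theta'}$ is a common face, using that $\overline{C}_\theta\supseteq\overline{C}_{\theta'}$ iff $R^\theta\subseteq R^{\theta'}$ and that the intersection corresponds to $R^\theta\cdot R^{\theta'}$ (representations semistable for both), giving a well-defined GIT class whose cone is the intersection and which is a face of each. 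The main obstacle I expect is the bookkeeping in this last step — verifying cleanly that the face of a cone $\overline{C}_\theta$ really is some $\overline{C}_{\theta_1}$ and that intersections are common faces, which amounts to showing the assignment $\theta\mapsto R^\theta$ from the stratification of $H(Q,d)$ by the hyperplanes $H_{d'}$ (for generic $d'$) to semistable loci is order-reversing and that distinct strata in a neighbourhood give distinct loci; this is where the strong convexity of $C(Q,d)$ when $Q$ has no oriented cycles, and finiteness, are used, and where I would lean on \cite{Re00}, \cite{Ch08}.
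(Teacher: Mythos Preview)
The paper does not prove this proposition; it states it as a standard result and refers to \cite{Re00}, \cite{Ch08}. Your proposal therefore goes beyond what the paper does by sketching an argument.

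Your description of $\overline{C}_{\theta_0}$ as the intersection of the half-spaces $\{\theta'(d')\le 0\}$ for $d'\in D(\theta_0)$ is correct and is the natural starting point. However, there is a genuine gap in step~2. To show $C_{\theta_0}=\relint(\overline{C}_{\theta_0})$ you need in particular the inclusion $R^{\theta'}\subseteq R^{\theta_0}$ for $\theta'$ having the same sign pattern as $\theta_0$ on $D(\theta_0)$, i.e.\ that every $\theta'$-semistable representation is $\theta_0$-semistable. Your justification --- ``for a subrepresentation $V'$, its dimension vector lies in the subsemigroup generated by the $d'\in D(\theta_0)$'' --- does not make sense here: $D(\theta_0)$ records dimension vectors of subrepresentations of $\theta_0$-semistable representations, so for a $V$ that is only assumed $\theta'$-semistable there is no reason its subrepresentations have dimension vectors controlled by $D(\theta_0)$. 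Relatedly, your description of $\relint(\overline{C}_{\theta_0})$ via ``the inequalities that $\theta_0$ satisfies strictly'' conflates the facet structure of the cone with the position of $\theta_0$ inside it; one has to argue separately that $\theta_0$ actually lies in the relative interior.

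A cleaner route, and the one implicit in the references, is to observe that the finite hyperplane arrangement $\{H_{d'}:0<d'<d\}$ stratifies $H(Q,d)$ into relatively open cells on which the sign vector $(\mathrm{sgn}\,\theta(d'))_{d'}$ is constant; on each such cell $R^\theta$ is visibly constant, so GIT equivalence is refined by this arrangement, and the $\overline{C}_\theta$ are unions of closures of cells. The fan axioms then follow from the combinatorics of the arrangement together with semicontinuity of $R^\theta$. Since you already acknowledge leaning on \cite{Re00}, \cite{Ch08} for step~3, you are effectively in the same position as the paper; but if you want a self-contained argument, the gap in step~2 is the piece that needs repair.
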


In the situation of proposition \ref{prop:GITclasses} the decomposition of $C(Q,d)$ 
into GIT equivalence classes is determined by segments of the hyperplanes $H_{d'}$. 
The relevant part $W_{d'}\subseteq H_{d'}$ consists of those $\theta$ such that 
there is a $\theta$-semistable representation $V$ with a subrepresentation 
$V'\subset V$ of dimension vector $d'$ such that $\theta(V')=0$. Since in this case 
both $V'$ and $V/V'$ are also $\theta$-semistable, we have the equivalent condition 
that there is a $\theta$-semistable $V$ of the form $V\cong V'\oplus V''$ with 
$V',V''$ $\theta$-semistable of dimension vector $d',d-d'$, 
thus:
\[
W_{d'}=C(Q,d')\cap C(Q,d-d')
\]
Using the fact from remark \ref{rem:CQd}.(3) that the sets $C(Q,d)$ for any $d$ are 
combinatorially determined, this implies the following result.

\begin{prop}
The set $C(Q,d)\subseteq H(Q,d)$ and its decomposition into GIT equivalence classes 
over an algebraically closed field $k$ do not depend on the base field $k$.
\end{prop}

It follows that the above results concerning the decomposition of $H(Q,d)$, in particular 
proposition \ref{prop:GITclasses}, also apply to the case of an arbitrary base scheme $S$. 
The weight space decomposition over $S$ can be determined by looking at an arbitrary fibre 
(or the situation over any algebraically closed field).

\medskip

Assume that stable representations for some weight $\theta$ exist. Then $C(Q,d)$ is 
full-dimensional. The sets $W_{d'}$ of codimension one are called walls. 
Walls whose intersections with $\interior(C(Q,d))$ are nonempty are called inner walls, 
otherwise outer walls. A weight $\theta\in H(Q,d)$ is said to be generic if 
$R^\theta(Q,d)=R^\textit{$\theta$-s}(Q,d)\neq\emptyset$. The GIT equivalence classes 
of generic weights are open sets called chambers, the connected components of weights
$\theta\in\interior C(Q,d)$ not contained in an inner wall. 

\medskip

By variation of geometric invariant theory quotients (VGIT), cf.\ \cite{Th96}, 
\cite{DH98}, quiver varieties of nearby weights are related. 
Let $C_{\theta_0},C_\theta\subset C(Q,d)$ be GIT equivalence 
classes such that $C_{\theta_0}\subseteq\overline{C}_\theta$.
Then the inclusion $R^\theta(Q,d)\subseteq R^{\theta_0}(Q,d)$
induces a morphism over $S$
\[\phi_{\theta,\theta_0}\colon\mathcal M^\theta(Q,d)\to\mathcal M^{\theta_0}(Q,d)\] 
The morphisms 
$\phi_{\theta,\theta_0}\colon\mathcal M^\theta(Q,d)\to\mathcal M^{\theta_0}(Q,d)$
for representatives of the GIT equivalence classes in $C(Q,d)$
form an inverse system and we can consider its inverse limit 
$\varprojlim_\theta\,\mathcal M^\theta(Q,d)$. 

\begin{prop}\label{prop:functorinvlim}
Let $Q$ be a quiver and $(d_q)_{q\in Q_0}$ be an indivisible dimension vector. 
Assume that stable representations for some weight $\theta$ exist. 
Then the functor $\varprojlim_\theta\,\mathcal M^\theta(Q,d)$ is isomorphic to the 
closed subfunctor of the product of $\mathcal M^\theta(Q,d)$ for generic $\theta$
\begin{equation}\label{eq:functorinvlim}
Y\;\mapsto\;
\left\{\left.
(\phi_\theta)_\theta\in\!\!\prod_{\textit{$\theta$ generic}}\!\!\!
\mathcal M^\theta(Q,d)(Y)
\;\;\right|\;
\begin{array}{l}
\forall\,\theta,\theta'\,\textit{generic}\;\;\:
\forall\,\tau\in\overline{C}_\theta\cap\overline{C}_{\theta'}\colon\\
\phi_{\theta,\tau}\circ\phi_\theta=\phi_{\theta',\tau}\circ\phi_{\theta'}
\end{array}
\right\}
\end{equation}
\end{prop}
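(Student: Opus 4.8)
The plan is to identify $\varprojlim_\theta \mathcal M^\theta(Q,d)$ with its functor of points and then cut it down to a product over only the generic (full-dimensional) chambers. First I would recall that the inverse limit of schemes along the system $(\phi_{\theta,\theta_0})$, taken over representatives of \emph{all} GIT equivalence classes in $C(Q,d)$, represents the functor
\[
Y \;\mapsto\; \Bigl\{ (\phi_\theta)_{\theta}\in\!\!\prod_{\theta}\!\mathcal M^\theta(Q,d)(Y)
\;\Bigm|\; \phi_{\theta,\theta_0}\circ\phi_\theta=\phi_{\theta_0}\ \text{whenever}\ C_{\theta_0}\subseteq\overline C_\theta \Bigr\},
\]
since a cone in a fan has only finitely many faces and the system is finite, so the limit exists as a scheme (it is a closed subscheme of the finite product, defined by the equalizer conditions, the target $\mathcal M^{\theta_0}(Q,d)$ being separated over $S$). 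The content of the proposition is that this functor is unchanged if one (i) restricts the indexing to generic $\theta$ only, and (ii) replaces the compatibility conditions $\phi_{\theta,\theta_0}\circ\phi_\theta=\phi_{\theta_0}$ by the pairwise conditions over the common faces $\tau\in\overline C_\theta\cap\overline C_{\theta'}$.

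The key geometric input is that for every GIT class $C_{\theta_0}$ there exists a generic $\theta$ with $C_{\theta_0}\subseteq\overline C_\theta$: indeed $\overline C_{\theta_0}$ is a face of the fan covering $C(Q,d)$, and since $\interior C(Q,d)$ is nonempty and the inner walls are finitely many, any face (even an outer-wall face) lies in the closure of some full-dimensional chamber. So I would argue as follows. Given a compatible family $(\phi_\theta)_\theta$ over all classes, its restriction to generic $\theta$ automatically satisfies the pairwise conditions (each such $\tau$ is a face of both $\overline C_\theta$ and $\overline C_{\theta'}$, so $\phi_{\theta,\tau}\circ\phi_\theta=\phi_\tau=\phi_{\theta',\tau}\circ\phi_{\theta'}$). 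Conversely, given $(\phi_\theta)_\theta$ for generic $\theta$ only, satisfying the pairwise conditions, I define $\phi_{\theta_0}:=\phi_{\theta,\theta_0}\circ\phi_\theta$ for a chosen generic $\theta\succeq\theta_0$; the main step is to check this is independent of the choice of $\theta$ and that the resulting full family is compatible. Independence: if $\theta,\theta'$ are both generic with $C_{\theta_0}\subseteq\overline C_\theta\cap\overline C_{\theta'}$, then $\overline C_{\theta_0}$ lies in $\overline C_\theta\cap\overline C_{\theta'}$, so taking $\tau=\theta_0$ (or rather any weight in $C_{\theta_0}$, which lies in $\overline C_\theta\cap\overline C_{\theta'}$) the pairwise hypothesis gives $\phi_{\theta,\theta_0}\circ\phi_\theta=\phi_{\theta',\theta_0}\circ\phi_{\theta'}$. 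Transitivity of the morphisms $\phi_{\theta,\theta_0}$ (i.e. $\phi_{\theta_1,\theta_0}\circ\phi_{\theta_2,\theta_1}=\phi_{\theta_2,\theta_0}$ for a chain of faces) then yields $\phi_{\theta_1,\theta_0}\circ\phi_{\theta_1}=\phi_{\theta_0}$ for arbitrary $C_{\theta_1}\subseteq\overline C_{\theta_0}$ and any (generic or not) $\theta_1$: pick generic $\theta$ with $\theta_0,\theta_1$ both in $\overline C_\theta$ and compose.

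I expect the main obstacle to be the bookkeeping around \emph{outer} walls and the claim that every face of $C(Q,d)$ lies in the closure of a full-dimensional chamber, together with the transitivity/compatibility of the VGIT morphisms $\phi_{\theta,\theta_0}$ along chains of faces — this transitivity is what lets one reduce all the compatibility data to the generic chambers. I would handle the face claim by the remark preceding the proposition: $C(Q,d)$ is full-dimensional, the walls are finite in number, so their complement in $\interior C(Q,d)$ is a nonempty finite union of open chambers whose closures cover $C(Q,d)$; hence the closure of any cone $\overline C_{\theta_0}$ of the fan, being cut out by finitely many of the hyperplanes $H_{d'}$, is a face of at least one $\overline C_\theta$ with $\theta$ generic. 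The transitivity of $\phi_{\theta,\theta_0}$ is immediate from its construction via the inclusions of open subschemes $R^\theta(Q,d)\subseteq R^{\theta_1}(Q,d)\subseteq R^{\theta_0}(Q,d)$ and the universal property of the GIT quotient (statement (5) after Theorem \ref{thm:adequatemodulispace}). Finally, the bijection of the two functors is natural in $Y$, giving the claimed isomorphism; since both are closed subfunctors of finite products of the $\mathcal M^\theta(Q,d)$ and the comparison maps are defined by composing with fixed morphisms of schemes, the identification is an isomorphism of schemes over $S$, not merely of functors.
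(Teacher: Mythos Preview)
Your proof is correct and follows the same approach as the paper's: the paper's two-sentence argument simply notes that every $\tau\in C(Q,d)$ lies in some $\overline{C}_\theta$ with $\theta$ generic (so the non-generic coordinates of the limit are recovered from the generic ones), and invokes separatedness of $\mathcal M^\tau(Q,d)$ over $S$ for closedness of the subfunctor. Your write-up is a careful expansion of exactly this reduction; one small slip to fix is in the compatibility paragraph, where you wrote $C_{\theta_1}\subseteq\overline C_{\theta_0}$ but need $C_{\theta_0}\subseteq\overline C_{\theta_1}$ for the morphism $\phi_{\theta_1,\theta_0}$ to exist.
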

\begin{proof}
The limit functor is isomorphic to this functor since each $\tau\in C(Q,d)$ 
is contained in some $\overline{C}_\theta$ for a generic $\theta$.
This functor is a closed subfunctor of the product because $\mathcal M^\tau(Q,d)$
is a scheme separated over $S$ (use \cite[(5.2.5)]{EGA1}).
\end{proof}

\section{Weight space decomposition and moduli spaces for some bipartite quivers}
\label{sec:weightspace-moduli-PnQn}

\subsection{The quivers $P_n,Q_n$ and the structure of the weight space}
\label{subsec:weightspacePnQn}

We define the quivers $P_n$ and $Q_n$ with fixed dimension vectors (cf.\ introduction).

\begin{defi}
{\rm (a)} Let
$Q_n=(\{p,q_1,\ldots,q_n\},\{\alpha_1,\ldots,\alpha_n\})$
where $s(\alpha_i)=q_i$, $t(\alpha_i)=p$ with dimension vector given by $d_p=2$ 
and $d_{q_i}=1$.\\
{\rm (b)} Let $P_n=(\{p_1,p_2,q_1,\ldots,q_n\},\{\alpha_{1,1},\alpha_{1,2},\ldots,
\alpha_{n,1},\alpha_{n,2}\})$
where $s(\alpha_{i,j})=q_i$, $t(\alpha_{i,j})=p_j$ with dimension 
vector $(1,\ldots,1)$.
\end{defi}

For these quivers with dimension vectors we determine the decomposition of the 
weight space as considered in subsection \ref{subsec:weightspace-limitfunctor}. 
In the following when studying the structure of GIT classes in the cone 
$C(Q)\subset H(Q)$ for $Q=Q_n,P_n$ we restrict to the intersection with 
a certain affine hyperplane. In the case $Q_n$ this yields the hypersimplex 
$\Delta(2,n)$, in the case $P_n$ the product of simplices $\Delta^1\times\Delta^{n-1}$.

\begin{rem}\label{rem:Qn-weightspace} Decomposition of the weight space for $Q_n$.\\
(1) A weight for the quiver $Q_n$ is given by a tuple  
$\theta=(\eta,\theta_1,\ldots,\theta_n)$ where $\theta_p=\eta$, $\theta_{q_i}=\theta_i$. 
The weight space is the $n$-dimensional hyperplane
$H(Q_n)=\{\theta\in\Q^{n+1}\:|\:2\eta=-\sum_i\theta_i\}\subset\Q^{n+1}$. 
Omitting the coordinate $\eta$ we have an isomorphism $H(Q_n)\cong\Q^n$, 
with basis $e_1,\ldots,e_n\in\Q^n$ and coordinates $\theta_1,\ldots,\theta_n$. 
A semistable representation exists if and only if 
$0\leq\theta_i\leq\frac{1}{2}\sum_{i=1}^n\theta_i$ for all $i$, 
this defines a full-dimensional strongly convex polyhedral cone $C(Q_n)\subset H(Q_n)$. 
The affine hyperplane $\{\sum_{i=1}^n\theta_i=2\}$ intersects 
this cone in the hypersimplex 
\[\textstyle
C(Q_n)\cap\big\{\sum_{i=1}^n\theta_i=2\big\}\;=\;
\conv\big\{e_i+e_j\:|\:i\neq j\big\}\;=\;\Delta(2,n).
\]
(2) For $\theta=(\theta_1,\ldots,\theta_n)\in\Delta(2,n)$ a $\theta$-semistable 
representation that is not $\theta$-stable exists if and only if $\theta$ is 
contained in one of the following walls. The outer walls are of the form 
$W_i=\{\theta_i=0\}\cap\Delta(2,n)=\conv\{e_j+e_k\:|\:j,k\neq i\}
\cong\Delta(2,n-1)$ and 
$W_{\{\{i\},\{1,\ldots,n\}\setminus \{i\}\}}=\linebreak\{\theta_i=1\}\cap\Delta(2,n)
=\conv\{e_i+e_j\:|\:j\in\{1\ldots,n\}\setminus\{i\}\}
\cong\Delta(1,n-1)=\Delta^{n-2}$.
The inner walls are 
\[\textstyle
W_{\{J,J^\complement\}}\;=\;\big\{\theta\in\Delta(2,n)\:\big|\:\sum_{i\in J}\theta_i=1\big\}
\;=\;\conv\big\{e_i+e_j\:|\:i\in J,j\in J^\complement\big\}
\]
for $J\subset\{1,\ldots,n\}$, $2\leq|J|\leq n-2$, 
$J^\complement=\{1,\ldots,n\}\setminus J$. It is 
$W_{\{J,J^\complement\}}\cong\Delta^{|J|-1}\times\Delta^{n-|J|-1}$.\\
(3) An inner wall $W_{\{J,J^\complement\}}$ intersects the boundary of $\Delta(2,n)$
as follows:
$W_{\{J,J^\complement\}}\cap W_i=
\conv\{e_j+e_k\:|\:j\in J\setminus\{i\},k\in J^\complement\setminus\{i\}\}$ 
(i.e.\ a wall $W_{\{J\setminus\{i\},J^\complement\setminus\{i\}\}}
\subset\Delta(2,\{1,\ldots,n\}\setminus\{i\})$)
and, assuming $i\in J$,
$W_{\{J,J^\complement\}}\cap W_{\{\{i\},\{i\}^\complement\}}
=\conv\{e_i+e_j\:|\:j\in J^\complement\}=\Delta^{|J^\complement|-1}$.\\
(4) Distinct inner walls $W_{\{J_1,J_1^\complement\}},W_{\{J_2,J_2^\complement\}}
\subset\Delta(2,n)$ do not intersect in the interior of $\Delta(2,n)$ 
if and only if the two partitions of $\{1,\ldots,n\}$ into two subsets 
are compatible in the sense that there is an inclusion between the sets
$J_1,J_1^\complement$ and $J_2,J_2^\complement$.
In this case, if we assume $J_1\subset J_2$, the intersection
$W_{\{J_1,J_1^\complement\}}\cap W_{\{J_2,J_2^\complement\}}
=\conv\{e_j+e_k\:|\:j\in J_1,k\in J_2^\complement\}
=\Delta^{|J_1|-1}\times\Delta^{|J_2^\complement|-1}$
is a wall of the face $\Delta(2,J_1\cup J_2^\complement)$
of $\Delta(2,n)$.
\end{rem}

We have the representation space $R(Q_n)=(\A^2)^n$ with the operation of 
$\GL(2)\times(\G_m)^n$. A free representation of $Q_n$ over a scheme $Y$ 
after a choice of a basis can be expressed as a tuple $V=(s_1,\ldots,s_n)$ 
of $n$ sections of $\A^2_Y\to Y$. 
We write $s_i\sim s_j$ if $s_i$ and $s_j$ are in the same $\G_m$-orbit.

\begin{lemma}\label{lemma:repr-pol-Qn}
Let $V=(s_1,\ldots,s_n)$ be a representation of $Q_n$ over an algebraically closed 
field that is semistable with respect to some weight in the interior of $\Delta(2,n)$,
in particular all $s_i$ are nonzero. Let the partition $\{1,\ldots,n\}=\bigsqcup_lJ_l$ 
be defined by $i,j\in J_l$ for some $l$ if and only if $s_i\sim s_j$.
The set $\Theta(V)=\{\theta\in\Delta(2,n)\:|\:\textit{$V$ is $\theta$-semistable}\}$ 
is a closed convex polytope in $\Delta(2,n)$ with the following properties:
\begin{enumerate}[\rm(i)]
\item We have 
$\Theta(V)=\bigcap_l\{\theta\in\Delta(2,n)\:|\:\sum_{i\in J_l}\theta_i\leq 1\}$,
i.e. $\Theta(V)$ is bounded within $\Delta(2,n)$ by the inner walls 
$W_{\{J_l,J_l^\complement\}}$ for $2\leq|J_l|\leq n-2$. Any two distinct of 
these walls do not intersect in the interior of $\Delta(2,n)$.
\item The sets of vertices and edges of $\Theta(V)$ are subsets of the sets 
of vertices and edges of $\Delta(2,n)$.
$\Theta(V)=\conv\{e_i+e_j\:|\:\textit{$i\in J_l$, $j\in J_{l'}$ for some $l\neq l'$}\}$.
\item $\Theta(V)$ is full dimensional if and only if there are $\geq3$
classes $J_l$, or equivalently, $V$ is stable with respect to some
weight. Otherwise there are two classes $J$ and $J^\complement$ and 
$\Theta(V)=W_{\{J,J^\complement\}}$.
\end{enumerate}
\end{lemma}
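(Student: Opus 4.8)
The plan is to fix the representation $V=(s_1,\dots,s_n)$ with all $s_i\neq 0$ and to analyse directly, for $\theta$ in the interior of $\Delta(2,n)$, when a nonzero proper subrepresentation $V'\subseteq V$ with $\theta(V')\geq 0$ can exist. A subrepresentation of $V$ is given by a subspace $W\subseteq k^2$ together with a subset $I\subseteq\{1,\dots,n\}$ of the source vertices; for this to be a subrepresentation we need $s_i\in W$ for every $i\in I$. Since all $s_i$ are nonzero, the only candidate subspaces with $\dim W=0$ force $I=\emptyset$ (giving $V'=0$), and $\dim W=2$ gives $W=k^2$ (then $V'=V$ unless $I\neq\{1,\dots,n\}$). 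The interesting case is $\dim W=1$: then $W=\langle s_i\rangle$ for some $i$, and $I$ must be contained in the set of indices $j$ with $s_j\sim s_i$, i.e.\ $I\subseteq J_l$ where $J_l$ is the class containing $i$. For such a subrepresentation, $\theta(V')=\eta+\sum_{i\in I}\theta_i=-\tfrac12\sum_i\theta_i+\sum_{i\in I}\theta_i$. Taking $I=J_l$ maximal, destabilisation by a $1$-dimensional subspace occurs precisely when $\sum_{i\in J_l}\theta_i\geq 1$ for some $l$. One must also check the case $\dim W=2$, $I=\{1,\dots,n\}\setminus\{i\}$ (a codimension-one subrepresentation with $\theta(V')=-\theta_i<0$ in the interior, hence harmless) and $\dim W=0$ with $I\neq\emptyset$ (not a subrepresentation). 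This establishes (i): $V$ is $\theta$-semistable iff $\sum_{i\in J_l}\theta_i\leq 1$ for all $l$, and the classes $J_l$ with $|J_l|=1$ or $|J_l|\geq n-1$ contribute walls that are automatically respected on $\Delta(2,n)$ or that are outer walls, so only $2\leq|J_l|\leq n-2$ matter. Non-intersection of these walls in the interior follows from remark \ref{rem:Qn-weightspace}: the partitions $\{J_l,J_l^\complement\}$ are pairwise compatible since the $J_l$ are disjoint.

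For (ii), I would compute the vertices of the polytope $\Theta(V)=\Delta(2,n)\cap\bigcap_l\{\sum_{i\in J_l}\theta_i\leq 1\}$ directly. Every vertex $e_i+e_j$ of $\Delta(2,n)$ satisfies all the inequalities $\sum_{i\in J_l}\theta_i\leq 1$ except when both $i,j$ lie in a single class $J_l$ (then the sum equals $2$), so the vertices $e_i+e_j$ surviving in $\Theta(V)$ are exactly those with $i,j$ in different classes. To see $\Theta(V)$ is the convex hull of these surviving vertices and has no new vertices, one argues that $\Theta(V)$ is obtained from $\Delta(2,n)$ by successively truncating with hyperplanes $\sum_{i\in J_l}\theta_i=1$, each of which is a wall $W_{\{J_l,J_l^\complement\}}$ already described in remark \ref{rem:Qn-weightspace} as a face-type subpolytope; since these walls are mutually non-crossing in the interior, the truncations are ``clean'' and the resulting polytope's vertex set is precisely $\{e_i+e_j : \exists\,l\neq l',\ i\in J_l,\ j\in J_{l'}\}$, with edges among the edges of $\Delta(2,n)$. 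I expect this combinatorial bookkeeping of faces — showing no spurious vertices appear — to be the main technical point, and it is handled by invoking the intersection formulas for walls from remark \ref{rem:Qn-weightspace} rather than by a brute-force polyhedral computation.

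For (iii), the dimension of $\Theta(V)$ as a subpolytope of the $(n-1)$-dimensional $\Delta(2,n)$: if there are exactly two classes $J,J^\complement$, then $\Theta(V)=\{\theta\in\Delta(2,n):\sum_{i\in J}\theta_i\leq 1,\ \sum_{i\in J^\complement}\theta_i\leq 1\}$; but on $\Delta(2,n)$ one has $\sum_{i\in J}\theta_i+\sum_{i\in J^\complement}\theta_i=2$, so both inequalities force equality, giving $\Theta(V)=W_{\{J,J^\complement\}}$, which has codimension $1$. If there are at least three classes, one exhibits an interior point: by remark \ref{rem:Qn-weightspace} (or by an averaging argument, e.g.\ $\theta_i=2/n$ for all $i$, which gives $\sum_{i\in J_l}\theta_i=2|J_l|/n<1$ when every $|J_l|\leq n/2$, and one adjusts if some class is larger) one produces a $\theta$ with all $\sum_{i\in J_l}\theta_i<1$ strictly and $0<\theta_i<1$ strictly, which is a generic weight for $V$; hence $V$ is $\theta$-stable for some $\theta$ and $\Theta(V)$ is full-dimensional. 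Conversely full-dimensionality gives such a strict-inequality point, hence $\geq 3$ classes (two classes would collapse as above). This closes the equivalence in (iii) and ties ``full-dimensional'' to ``stable with respect to some weight''.
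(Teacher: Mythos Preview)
Your proposal is correct and follows essentially the same route as the paper, which is extremely terse: the paper's proof reads in full ``Most of the lemma is obvious. For (ii) note that by remark \ref{rem:Qn-weightspace} all intersections of inner and outer walls that bound $\Theta(V)$ result in a wall of some hypersimplex.'' Your argument unpacks exactly this---the direct analysis of subrepresentations for (i), the appeal to the wall-intersection formulas of remark \ref{rem:Qn-weightspace} for (ii), and the elementary dichotomy on the number of classes for (iii)---so there is no substantive divergence.

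One small remark on (iii): your suggested interior point $\theta_i=2/n$ can fail when one class $J_l$ has $|J_l|>n/2$, as you note. A cleaner choice that always works for $m\geq 3$ classes is to assign total weight $2/m$ to each class (distributed evenly within the class); then every $\sum_{i\in J_l}\theta_i=2/m<1$ strictly and every $\theta_i>0$, giving a point in the interior of both $\Delta(2,n)$ and $\Theta(V)$.
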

\begin{proof} 
For (i) note that the nontrivial subrepresentations of $V$ with one-dimensional 
subspace in $V_q$ are given by the sets $J_l$ (the spaces ($V_{q_j})_{j\in J_l}$ 
together with the one-dimensional subspace of $V_p$ determined by $(s_j)_{j\in J_l}$). 
That the walls do not intersect in the interior follows from remark 
\ref{rem:Qn-weightspace}.(4).
Concerning (ii), as the inner walls bounding $\Theta(V)$ do not intersect in the
interior of $\Delta(2,n)$, the first statement follows inductively using
remark \ref{rem:Qn-weightspace}.(3) and (4). Then $\Theta(V)$ is the convex hull
of the vertices of $\Delta(2,n)$ it contains.
The statements in (iii) are easy to show.
\end{proof}

The property that the sets of vertices and edges of $\Theta(V)$
are subsets of the sets of vertices and edges of $\Delta(2,n)$ means
that $\Theta(V)$ is a matroid polytope.
Matroid polytopes were defined in \cite{GGMS87} (see also \cite[(1.2.6)]{Ka93a}), 
and by \cite[Thm.\ 4.1]{GGMS87} matroid polytopes in the hypersimplex $\Delta(k,n)$
correspond to matroids of rank $k$ on $\{1,\ldots,n\}$.
In the theory of GIT $\Theta(V)$ corresponds to the stability set of $V$ 
denoted $\Omega(V)$ in \cite{Re00}.

\medskip

Considering the quiver $P_n$ we have the representation space $R(P_n)=(\A^1\times\A^1)^n$ 
with the operation of $(\G_m\times\G_m)\times(\G_m)^n$. We write a free representation 
of $P_n$ over a scheme $Y$ after a choice of a basis as a tuple $V=(s_1,\ldots,s_n)$ 
of sections of $(\A^1\times\A^1)_Y\to Y$ where $s_i$ corresponds to 
$(\alpha_{i,1},\alpha_{i,2})$.
We set $s_i\sim s_j$ if $s_i$ and $s_j$ are in the same $\G_m$-orbit and add 
the sections $s_0=(0,1)$, $s_\infty=(1,0)$ which are only used to compare with 
sections $s_i$ up to $\sim$.

\begin{rem}\label{rem:Pn-weightspace} 
Decomposition of the weight space for $P_n$.\\
(1) A weight for the quiver $P_n$ is an element
$\theta=(\eta_1,\eta_2,\theta_1,\ldots,\theta_n)$ where
$\theta_{p_i}=\eta_i$, $\theta_{q_i}=\theta_i$. We also write this as 
$\theta=\eta_1f_1+\eta_2f_2+\sum_i\theta_ie_i$ with 
$f_1,f_2,e_1,\ldots,e_n$ a basis of $\Q^{n+2}$.
The weight space is the $(n+1)$-dimensional hyperplane 
$H(P_n)=\{\sum_i\theta_i=-\eta_1-\eta_2\}\subset\Q^{n+2}$.
A semistable representation exists if and only if 
$\eta_1,\eta_2\leq0$ and $\theta_1,\ldots,\theta_n\geq 0$. 
This defines a full-dimensional cone $C(P_n)\subset H(P_n)$.
The affine hyperplane defined by $\eta_1+\eta_2=-1$ intersects this cone in
a product of simplices
\[
C(P_n)\cap\{\eta_1\!+\!\eta_2\!=\!-1\}
=\conv\{e_1\!-\!f_1,\ldots,e_n\!-\!f_1,e_1\!-\!f_2,\ldots,e_n\!-\!f_2\}
=\Delta^1\times\Delta^{n-1}.
\]
(2) The outer walls are of the form 
$W_{q_i}=\{\theta_i=0\}\cap(\Delta^1\times\Delta^{n-1})
\cong\Delta^1\times\Delta^{n-2}$ and 
$W_{p_j}=\{\eta_j=0\}\cap(\Delta^1\times\Delta^{n-1})\cong\Delta^{n-1}$.
The inner walls are 
\[\textstyle
W_J=\big\{\theta\in\Delta^1\times\Delta^{n-1}\:\big|\:\sum_{i\in J}\theta_i=-\eta_1\big\}
=\conv\big(\{e_i-f_1\:|\:i\in J\}\cup\{e_i-f_2\:|\:i\in J^\complement\}\big)
\]
for $J\subset\{1,\ldots,n\}$, $1\leq|J|\leq n-1$, $n\geq2$. 
It is $W_J\cong\Delta^{n-1}$.\\
(3) An inner wall $W_J$ intersects the boundary of $\Delta^1\times\Delta^{n-1}$
as follows:
$W_J\cap W_{q_i}=\conv\big(\{e_j-f_1\:|\:j\in J\setminus\{i\}\}\cup\{e_j-f_2\:|\:
j\in J^\complement\setminus\{i\}\}\big)$ 
(i.e.\ a wall $W_{J\setminus\{i\}}\subset\Delta^1\times\Delta^{n-2}$)
and
$W_J\cap W_{q_1}=\conv\big(\{e_i-f_2\:|\:i\in J^\complement\}\big)
=\Delta^{|J^\complement|-1}$, 
$W_J\cap W_{q_2}=\conv\big(\{e_i-f_1\:|\:i\in J\}\big)
=\Delta^{|J|-1}$.\\
(4) Two inner walls $W_{J_1},W_{J_2}$ do not intersect in the interior of 
$\Delta^1\times\Delta^{n-1}$ if and only if $J_1\subsetneq J_2$
or $J_2\subsetneq J_1$.
\end{rem}

\begin{rem}
The notation is such that for $\tau\in\relint(W_J)$ representations
$(s_1,\ldots,s_n)$ with $s_j\sim s_\infty=(1,0)$ for $j\in J$ and
$s_i\sim s_0=(0,1)$ for $i\in J^\complement$ are strictly $\tau$-semistable.
\end{rem}

The following lemma can be proven in the same way as lemma \ref{lemma:repr-pol-Qn} 
using remark \ref{rem:Pn-weightspace}.

\begin{lemma}\label{lemma:repr-pol-Pn}
Let $V=(s_1,\ldots,s_n)$ be a representation of $P_n$ over an algebraically closed 
field that is semistable with respect to some weight in the interior of 
$\Delta^1\times\Delta^{n-1}$, in particular all $s_i$ are nonzero.
Let $J_0,J_\infty\subset\{1,\ldots,n\}$ be defined by 
$J_0=\{i\:|\:s_i\sim s_0=(0,1)\}$, $J_\infty=\{i\:|\:s_i\sim s_\infty=(1,0)\}$. The set 
$\Theta(V)=\{\theta\in\Delta^1\times\Delta^{n-1}\:|\:\textit{$V$ is $\theta$-semistable}\}$
is a closed convex polytope in $\Delta^1\times\Delta^{n-1}$ with the following 
properties:
\begin{enumerate}[\rm(i)]
\item $\Theta(V)=\{\theta\in\Delta^1\times\Delta^{n-1}\:|\:
\sum_{i\in J_0}\theta_i\leq-\eta_2\}
\cap\{\theta\in\Delta^1\times\Delta^{n-1}\:|\:\sum_{i\in J_\infty}\theta_i\leq-\eta_1\}$,
i.e. $\Theta(V)$ is bounded within $\Delta^1\times\Delta^{n-1}$ by the walls 
$W_{\!J_0^\complement},W_{J_\infty}$. These walls, if distinct, do not intersect in 
the interior of $\Delta^1\times\Delta^{n-1}$.
\item The set of vertices of $\Theta(V)$ is a subset of the set of vertices of 
$\Delta^1\times\Delta^{n-1}$. 
It is $\Theta(V)=
\conv\big(\{e_i-f_2\:|\:i\in J_0\}\cup\{e_i-f_1\:|\:i\in J_\infty\}\big)$.
\item $\Theta(V)$ is full dimensional if and only if 
$(J_0\cup J_\infty)^\complement\neq\emptyset$, or equivalently, $V$ 
is stable with respect to some weight. Otherwise 
$\Theta(V)=W_{\!J_0^\complement}=W_{J_\infty}$.
\end{enumerate}
\end{lemma}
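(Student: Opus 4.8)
The plan is to read off $\Theta(V)$ directly from the subrepresentation structure of $V$ and then to rephrase the resulting linear inequalities in the wall language of remark~\ref{rem:Pn-weightspace}. Since $V$ is semistable with respect to an interior weight, every $s_i$ is nonzero. As each $V_{p_j}$ and each $V_{q_i}$ is one-dimensional, a subrepresentation $V'\subseteq V$ is determined by the pair $(\dim V'_{p_1},\dim V'_{p_2})\in\{0,1\}^2$ together with the set of $q_i$ with $V'_{q_i}=V_{q_i}$. I would run through the four cases for $(\dim V'_{p_1},\dim V'_{p_2})$. In the case $(0,0)$ every $V'_{q_i}$ must be $0$ (else $s_i=0$), so only $V'=0$ occurs. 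In the case $(1,1)$ any set of $q_i$ is admissible, and $\theta(V')=\eta_1+\eta_2+\sum_{i\in I}\theta_i=-\sum_{i\notin I}\theta_i\le 0$ on $C(P_n)$, so these never destabilise. In the case $(1,0)$ admissibility of $q_i$ forces the second component of $s_i$ to vanish, i.e.\ $s_i\sim s_\infty$ (using $s_i\neq 0$), so the admissible index set is $J_\infty$, and since $\theta_i\ge 0$ on $C(P_n)$ the largest $\theta$-value in this family is $\eta_1+\sum_{i\in J_\infty}\theta_i$. The case $(0,1)$ is symmetric, with $s_0$ in place of $s_\infty$ and extremal value $\eta_2+\sum_{i\in J_0}\theta_i$.

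This already gives (i): for $\theta\in\Delta^1\times\Delta^{n-1}$, $V$ is $\theta$-semistable iff $\eta_1+\sum_{i\in J_\infty}\theta_i\le 0$ and $\eta_2+\sum_{i\in J_0}\theta_i\le 0$. Using $\sum_i\theta_i=-\eta_1-\eta_2$, the first inequality becomes $\sum_{i\in J_\infty}\theta_i\le-\eta_1$, the half-space bounded by $W_{J_\infty}$, and the second becomes $\sum_{i\in J_0^\complement}\theta_i\ge-\eta_1$, the half-space bounded by $W_{J_0^\complement}$. A nonzero section of $\A^1\times\A^1$ cannot lie in the $\G_m$-orbit of both $s_0=(0,1)$ and $s_\infty=(1,0)$, so $J_0\cap J_\infty=\emptyset$ and hence $J_\infty\subseteq J_0^\complement$; remark~\ref{rem:Pn-weightspace} then shows $W_{J_\infty}$ and $W_{J_0^\complement}$ either coincide or do not meet in the interior of $\Delta^1\times\Delta^{n-1}$.

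For (ii) I would first evaluate the two inequalities of (i) at the $2n$ vertices $e_i-f_1,e_i-f_2$ of $\Delta^1\times\Delta^{n-1}$; this immediately singles out the subset $S$ of them lying in $\Theta(V)$, each of which is automatically a vertex of $\Theta(V)$. It then remains to show $\Theta(V)=\conv S$, i.e.\ that intersecting $\Delta^1\times\Delta^{n-1}$ with the two half-spaces of (i) introduces no new vertex; this is the step I expect to be the main obstacle. I would argue it from remark~\ref{rem:Pn-weightspace}: the boundary hyperplane of each half-space meets $\Delta^1\times\Delta^{n-1}$ in a sub-simplex $W_J$ with vertices among the $e_i-f_j$; the two functionals $\eta_1+\sum_{i\in J_\infty}\theta_i$ and $\eta_2+\sum_{i\in J_0}\theta_i$ take only the values $-1,0,1$ at the $e_i-f_j$ and never opposite nonzero values at the two ends of an edge, so neither hyperplane crosses an edge of $\Delta^1\times\Delta^{n-1}$ transversally; and whenever both hyperplanes slice through a given square two-face in a diagonal, it is the same diagonal, which is exactly the point at which $J_0\cap J_\infty=\emptyset$ is used. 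Together these force $\Theta(V)=\conv S$. This is the matroid-polytope phenomenon, parallel to lemma~\ref{lemma:repr-pol-Qn}(ii).

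For (iii), the set of $\theta$ for which $V$ is $\theta$-stable is the open subset of $\Delta^1\times\Delta^{n-1}$ cut out by the two strict inequalities, so $V$ is stable for some weight iff $\Theta(V)$ has nonempty interior, i.e.\ is full-dimensional. Writing $J_0^\complement=J_\infty\sqcup K$ with $K=(J_0\cup J_\infty)^\complement$, the two functionals of (i) differ by $\sum_{i\in K}\theta_i$, which is $\ge 0$ on $C(P_n)$ and vanishes identically iff $K=\emptyset$. If $K=\emptyset$ the two half-spaces share the boundary $W_{J_\infty}=W_{J_0^\complement}$ and $\Theta(V)$ collapses onto this wall; if $K\neq\emptyset$ one exhibits an interior point of $\Theta(V)$, so it is full-dimensional.
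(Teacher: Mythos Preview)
Your argument is correct and complete. The paper does not give a proof of this lemma; it is stated immediately after remark~\ref{rem:Pn-weightspace} without proof, by analogy with lemma~\ref{lemma:repr-pol-Qn}, whose proof in turn consists only of ``most of the lemma is obvious; for (ii) note that all intersections of inner and outer walls that bound $\Theta(V)$ result in a wall of some hypersimplex''. Your write-up is the natural elaboration of that sketch, carried out with the wall description of remark~\ref{rem:Pn-weightspace}: the subrepresentation bookkeeping for (i), the edge/face argument for (ii), and the degeneracy analysis for (iii) are exactly what the paper has in mind.

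One small imprecision in (iii): the two functionals $f=\eta_1+\sum_{i\in J_\infty}\theta_i$ and $g=\eta_2+\sum_{i\in J_0}\theta_i$ do not \emph{differ} by $\sum_{i\in K}\theta_i$; rather, on $\Delta^1\times\Delta^{n-1}$ they satisfy $f+g=-\sum_{i\in K}\theta_i$. This is what you actually use: if $K=\emptyset$ then $f+g\equiv 0$, so $f\le 0$ and $g\le 0$ force $f=g=0$ and $\Theta(V)$ collapses to the common wall; if $K\neq\emptyset$ then $f+g<0$ somewhere in the interior and one finds a point with $f<0$, $g<0$. Your ``square two-face / same diagonal'' check in (ii) is exactly the verification that after the first cut the new diagonal edges of $W_{J_\infty}$ are not crossed transversally by the second hyperplane, which is where $J_0\cap J_\infty=\emptyset$ enters; this is the right thing to check and your argument goes through.
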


\subsection{Relation between representations of $Q_{n+2}$ and $P_n$}
\label{subsec:Qn+2-Pn}

We show that the structure of GIT equivalence classes in 
$\Delta^1\times\Delta^{n-1}\subset H(P_n)$ arises from the
structure of GIT equivalence classes in $\Delta(2,n+2)$ in the 
neighbourhood of some vertex $e_a+e_b$ and compare the corresponding
stacks and moduli spaces of representations.

\medskip

The subset $\Delta(2,n+2)_{a,b}=\Delta(2,n+2)\cap
\{\theta_a+\theta_b\geq\sum_{i\neq a,b}\theta_i\}\setminus\{e_a+e_b\}$ 
is part of a cone with apex $e_a+e_b$ and we can project it onto
$W_{\{\{a,b\},\{a,b\}^\complement\}}=\Delta(2,n+2)\cap\linebreak
\{\theta_a+\theta_b=\sum_{i\neq a,b}\theta_i\}$ by
\[
\begin{array}{rcl}
\Delta(2,n+2)_{a,b}&\to&W_{\{\{a,b\},\{a,b\}^\complement\}}\\
\theta=(\theta_a,\theta_b,(\theta_i)_{i\neq a,b})&\mapsto&
\bar{\theta}=(\lambda(\theta_a-1)+1,\lambda(\theta_b-1)+1,(\lambda\theta_i)_{i\neq a,b})\\
\end{array}
\]
where $\lambda=1/\sum_{i\neq a,b}\theta_i$.
The wall $W_{\{\{a,b\},\{a,b\}^\complement\}}=\conv(\{e_i+e_a\:|\:i\neq a,b\}\cup\linebreak
\{e_i+e_b\:|\:i\neq a,b\})\subset\Delta(2,n+2)$ can be identified with 
$\Delta^1\times\Delta^{n-1}=\conv(\{e_i-f_2\:|\:i\neq a,b\}\cup\linebreak
\{e_i-f_1\:|\:i\neq a,b\})\subset H(P_n)$ by 
\[
\begin{array}{rcl}
W_{\{\{a,b\},\{a,b\}^\complement\}}&\to&\Delta^1\times\Delta^{n-1}\subset H(P_n)\\[1mm]
\bar{\theta}=(\bar{\theta}_a,\bar{\theta}_b,(\bar{\theta}_i)_{i\neq a,b})&\mapsto&
\theta'=(\eta'_1=\bar{\theta}_a-1,\eta'_2=\bar{\theta}_b-1,
(\theta'_i=\bar{\theta}_i)_{i\neq a,b})\\
\end{array}
\]
We observe that the projections of the walls of $\Delta(2,n+2)$ near $e_a+e_b$
coincide with the walls of $\Delta^1\times\Delta^{n-1}\subset H(P_n)$. 
The walls meeting the interior of $\Delta(2,n+2)_{a,b}$ are the walls 
$W_{\{J,J^\complement\}}$ with $a\in J,b\in J^\complement$ or $a\in J^\complement,b\in J$. 
Their intersections with $W_{\{\{a,b\},\{a,b\}^\complement\}}$ are exactly the 
inner walls of $\Delta^1\times\Delta^{n-1}$: assume $a\in J, b\in J^\complement$, 
then the equation $\sum_{j\in J}\theta_j=1$ for $W_{\{J,J^\complement\}}$ 
is equivalent to the equation $\sum_{j\in J\setminus\{a\}}\theta'_j=-\eta'_1$
defining $W_{J\setminus\{a\}}\subset\Delta^1\times\Delta^{n-1}$.

\begin{prop}\label{prop:Qn+2-P_n}
Let $a,b\in\{1,\ldots,n+2\}$, $a\neq b$.\\[1mm]
{\rm (a)} The maps 
\begin{equation}\label{eq:maprepr-Qn+2-P_n}
\begin{array}{rcl}
\left\{\!
\begin{array}{l}
\textit{free representations with fixed bases}\\
\textit{$V=(s_1,\ldots,s_{n+2})$ of $Q_{n+2}$ over $Y$}\\
\textit{such that $s_a\cap 0=s_b\cap 0=\emptyset$}\\ 
\textit{and $\G_ms_a\!\cap\G_ms_b=\emptyset$}
\end{array}\!\right\}
&\rightarrow&
\left\{\!
\begin{array}{l}
\textit{free representations with}\\
\textit{fixed bases of $P_n$ over $Y$}
\end{array}\!\right\}
\\[9mm]
V=(s_1,\ldots,s_{n+2})&\mapsto&V'=(s'_i)_{i\neq a,b}
\end{array}
\end{equation}
for schemes $Y$ over a base scheme $S$, where $s'_i$ arises from $s_i$ by the 
base change that transforms $s_a$ to $(1,0)$ and $s_b$ to $(0,1)$, induce an 
isomorphism of stacks
\begin{equation}\label{eq:isom-Qn+2-Pn}
[R(Q_{n+2})_{a,b}/((\GL(2)\times(\G_m)^n)/\G_m)]\;\to\;
[R(P_n)/((\G_m\times\G_m)\times(\G_m)^n)/\G_m)]
\end{equation}
where $R(Q_{n+2})_{a,b}=R(Q_{n+2})\setminus(\{s_a=0\}\cup\{s_b=0\}\cup
\{\G_m s_a=\G_ms_b\})$.\\[1mm]
{\rm (b)} We have the map
\begin{equation}\label{eq:proj-Qn+2-Pn}
\begin{array}{rcl}
\Delta(2,n+2)_{a,b}&\to&\Delta^1\times\Delta^{n-1}\subset H(P_n)\\
\theta&\mapsto&\theta'=(\eta'_1=\lambda(\theta_a-1),\eta'_2=\lambda(\theta_b-1),
(\theta'_i=\lambda\theta_i)_{i\neq a,b})\\
\end{array}
\end{equation}
where $\lambda=1/\sum_{i\neq a,b}\theta_i$.
On the fibres of this map the $\theta$-(semi)stable locus in
$R(Q_{n+2})_{a,b}$ does not change.\\
{\rm (c)} 
Let $\theta\in W_{\{\{a,b\},\{a,b\}^\complement\}}\subset\Delta(2,n+2)$ and 
$\theta'\in\Delta^1\times\Delta^{n-1}\subset H(P_n)$ its image under the
map {\rm(\ref{eq:proj-Qn+2-Pn})}.
Let $l\in\N_{>0}$ such that $l\theta$ and $l\theta'$ are integral. Then
the equivariant line bundles $\O_{R(Q_{n+2})_{a,b}}$ with $l\theta$-linearisation 
and $\O_{R(P_n)}$ with $l\theta'$-linearisation define isomorphic line bundles 
on the quotient stacks under the identification {\rm(\ref{eq:isom-Qn+2-Pn})}.
\end{prop}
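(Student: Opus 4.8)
\emph{Proof strategy.} For part (a) the plan is to recognise both sides of (\ref{eq:isom-Qn+2-Pn}) as two presentations of the same stack, using the two distinguished sections $s_a,s_b$ as a frame. Over any $S$-scheme $Y$ the conditions defining $R(Q_{n+2})_{a,b}$ say precisely that $\det[s_a\,|\,s_b]\in\Gamma(Y,\O_Y)$ is a unit, so $R(Q_{n+2})_{a,b}$ is the principal open subscheme $D(\det[s_a\,|\,s_b])\subseteq R(Q_{n+2})$ and $[s_a\,|\,s_b]$ is an isomorphism $\O_Y^{\oplus2}\to\O_Y^{\oplus2}$. First I would verify that
\[
\GL(2)\times R(P_n)\;\longrightarrow\;R(Q_{n+2})_{a,b},\qquad(M,(t_i)_{i\neq a,b})\;\longmapsto\;V,
\]
where $V$ has $s_a,s_b$ equal to the two columns of $M$ and $s_i=Mt_i$ for $i\neq a,b$, is an isomorphism of $S$-schemes, with inverse $V\mapsto\big([s_a\,|\,s_b],\,([s_a\,|\,s_b]^{-1}s_i)_{i\neq a,b}\big)$; composing the inverse with the projection to $R(P_n)$ recovers the map (\ref{eq:maprepr-Qn+2-P_n}), which is thus a morphism of schemes $\pi\colon R(Q_{n+2})_{a,b}\to R(P_n)$. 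Transporting the $\GL(2)\times(\G_m)^{n+2}$-action through this isomorphism, the factor $\GL(2)$ acts by left translation on the $\GL(2)$-factor alone; the two $\G_m$'s at $q_a,q_b$ act on it by right translation with $\mathrm{diag}(\ast,1)$ resp.\ $\mathrm{diag}(1,\ast)$ and on $R(P_n)$ by rescaling the first resp.\ second coordinate of every $t_i$; and the $\G_m$ at $q_i$ $(i\neq a,b)$ rescales $t_i$ only. Hence $\GL(2)$ sits in $\overline{G(Q_{n+2})}$ as a normal subgroup acting freely on $R(Q_{n+2})_{a,b}$, $\pi$ is a (trivial) $\GL(2)$-torsor equivariant for the projection $q\colon\overline{G(Q_{n+2})}\twoheadrightarrow\overline{G(Q_{n+2})}/\GL(2)$, and $\overline{G(Q_{n+2})}/\GL(2)$ is canonically $\overline{G(P_n)}$ acting on $R(P_n)$ as above (the tori at $q_a,q_b$ becoming those at $p_1,p_2$ respectively). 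The standard fact that a $G$-equivariant $N$-torsor $X\to\bar X$ for an extension $1\to N\to G\to\bar G\to1$ induces an isomorphism $[X/G]\xrightarrow{\sim}[\bar X/\bar G]$ then gives (\ref{eq:isom-Qn+2-Pn}).

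For part (b): under the identifications recalled just before the proposition, the fibre of (\ref{eq:proj-Qn+2-Pn}) over $\theta'$ is the half-open segment $\{(e_a+e_b)+\mu\theta'\mid0<\mu\le1\}$ of the ray from the apex $e_a+e_b$ through $\theta'$, the bound $\mu\le1$ being exactly the inequality cutting out $\Delta(2,n+2)_{a,b}$. I would then check that such a ray crosses no wall of $\Delta(2,n+2)$: along it the linear form defining an inner wall $W_{\{J,J^\complement\}}$ is $\ge2$ when $a,b\in J$, is $\le\mu\le1$ when $a,b\in J^\complement$, and, for (say) $a\in J$, $b\in J^\complement$, equals $1+\mu\big(\eta'_1+\sum_{i\in J\setminus\{a\}}\theta'_i\big)$, which is identically $1$ on the open ray if $\theta'$ lies on the wall $W_{J\setminus\{a\}}$ of $\Delta^1\times\Delta^{n-1}$ and is $<1$ everywhere on it otherwise --- this uses the identity $W_{\{J,J^\complement\}}\cap W_{\{\{a,b\},\{a,b\}^\complement\}}=W_{J\setminus\{a\}}$ already recorded before the proposition; the outer walls are handled likewise via $\theta_i=\mu\theta'_i$. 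Thus all points of the open segment lie on the same set of walls, so these weights form a single GIT equivalence class and in particular the $\theta$-semistable and $\theta$-stable loci inside $R(Q_{n+2})_{a,b}$ do not vary along the fibre. (Alternatively, by Lemma \ref{lemma:repr-pol-Qn}: since $s_a\not\sim s_b$, the inequalities $\sum_{i\in J_l}\theta_i\le1$ for the proportionality classes containing $a$ or $b$ become, on dividing by $\mu$, the $\mu$-free conditions $\sum_{i\in J_l\setminus\{a\}}\theta'_i\le-\eta'_1$ and $\sum_{i\in J_l\setminus\{b\}}\theta'_i\le-\eta'_2$, while a class avoiding $a,b$ gives $\sum_{i\in J_l}\theta'_i\le1/\mu$, automatic since $\sum_i\theta'_i=1$ and $\mu\le1$.)

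For part (c): the isomorphism (\ref{eq:isom-Qn+2-Pn}) being the one induced by the $q$-equivariant morphism $\pi$ of part (a), the line bundle $\big(\O_{R(P_n)},\chi_{l\theta'}\big)$ with $\chi_{l\theta'}=\prod_q\det(g_q)^{-l\theta'_q}$ pulls back to $\big(\O_{R(Q_{n+2})_{a,b}},\chi_{l\theta'}\circ q\big)$, and it remains to identify this with $\big(\O_{R(Q_{n+2})_{a,b}},\chi_{l\theta}\big)$, $\chi_{l\theta}=\det(g_p)^{-l\theta_p}\prod_i g_{q_i}^{-l\theta_i}$. The plan is to twist by the unit $(\det[s_a\,|\,s_b])^{l\theta_p}\in\Gamma(R(Q_{n+2})_{a,b},\O^\times)$ (legitimate because $R(Q_{n+2})_{a,b}=D(\det[s_a\,|\,s_b])$ and $l\theta_p\in\Z$): since $\det[s_a\,|\,s_b]\mapsto\det(g_p)(g_{q_a}g_{q_b})^{-1}\det[s_a\,|\,s_b]$ under the group action, this isomorphism of equivariant line bundles turns $\chi_{l\theta}$ into the character $g\mapsto g_{q_a}^{-l(\theta_a+\theta_p)}g_{q_b}^{-l(\theta_b+\theta_p)}\prod_{i\neq a,b}g_{q_i}^{-l\theta_i}$. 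Finally, $\theta\in W_{\{\{a,b\},\{a,b\}^\complement\}}$ forces $\lambda=1$ in (\ref{eq:proj-Qn+2-Pn}), hence $\eta'_1=\theta_a-1$, $\eta'_2=\theta_b-1$, $\theta'_i=\theta_i$; with $\theta_p=-1$ on $\Delta(2,n+2)$ this gives $\theta_a+\theta_p=\eta'_1$ and $\theta_b+\theta_p=\eta'_2$, so the twisted character is exactly $\chi_{l\theta'}\circ q$, whence $(\O,\chi_{l\theta})\cong\pi^*(\O,\chi_{l\theta'})$ as asserted.

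The step I expect to be the real work is part (a): the framing is robust over an arbitrary base, but one must keep precise track of how each factor of $\GL(2)\times(\G_m)^{n+2}$ acts after trivialising and then invoke the torsor/extension principle in order to descend to quotient stacks rather than merely compare geometric quotients or $k$-points. Granting (a) and the wall combinatorics developed before the proposition, (b) is a short check and (c) a character computation whose only non-formal ingredient is the twist by $(\det[s_a\,|\,s_b])^{l\theta_p}$, forced by the fact that $\chi_{l\theta}$ and $\chi_{l\theta'}\circ q$ are cohomologous but not equal as characters of $\overline{G(Q_{n+2})}$.
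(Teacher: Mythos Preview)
Your argument is essentially correct, with one computational slip in (b), and in (a) and (c) it takes a somewhat different route from the paper's own proof.

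\textbf{Part (a).} The paper argues directly at the level of groupoids: it writes down a functor on objects via the base change sending $s_a,s_b$ to $(1,0),(0,1)$, describes it on morphisms by forgetting the $\G_m$-factors at $q_a,q_b$, writes an explicit inverse by inserting $s_a=(1,0)$, $s_b=(0,1)$, and checks the two compositions are isomorphic to the identities. Your approach is instead structural: you exhibit $R(Q_{n+2})_{a,b}\cong\GL(2)\times R(P_n)$ and then quotient by the normal subgroup $\GL(2)\subset\overline{G(Q_{n+2})}$ via the principle $[X/G]\cong[(X/N)/(G/N)]$ for an equivariant $N$-torsor. This is a genuinely different and arguably cleaner packaging; what you gain is that the compatibility of the two presentations is encoded once in the extension $1\to\GL(2)\to\overline{G(Q_{n+2})}\to\overline{G(P_n)}\to1$ rather than checked by hand on objects and arrows. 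What the paper's version buys is that it stays very close to the concrete description of points of the two stacks used throughout the rest of the section.

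\textbf{Part (b).} The paper's argument is: the fibre is the half-open segment $[\bar\theta,e_a+e_b]\setminus\{e_a+e_b\}$; its open part lies in a single GIT class; at the endpoint $\bar\theta$ the semistable locus can only grow by points with $s_a\sim s_b$, hence only outside $R(Q_{n+2})_{a,b}$. Your wall-by-wall check is in the same spirit, but the claim that the form $\sum_{j\in J}\theta_j$ is ``$\geq 2$'' along the ray when $a,b\in J$ is not right: at $(e_a+e_b)+\mu\theta'$ one gets $2-\mu\big(1-\sum_{i\in J\setminus\{a,b\}}\theta'_i\big)$, which is $\ge 1$ on $\mu\in(0,1]$ (strictly $>1$ for $\mu<1$), not $\ge 2$. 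With this corrected, your conclusion that the open segment lies in one GIT class stands. You should also say one word about the endpoint $\bar\theta$: there the representation with all $s_i$, $i\neq a,b$, proportional is strictly semistable, so the \emph{stable} locus in $R(Q_{n+2})_{a,b}$ does change at $\mu=1$; only the semistable locus is constant along the full fibre (this is what the paper actually proves and what is used afterwards). Your alternative via Lemma~\ref{lemma:repr-pol-Qn} handles this correctly for semistability and is in fact closer to the paper's reasoning.

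\textbf{Part (c).} The two arguments amount to the same character computation. The paper restricts to the stabiliser of a normalised representation (elements with $g_p=\mathrm{diag}(\alpha_a,\alpha_b)$ and $g_{q_a}=\alpha_a$, $g_{q_b}=\alpha_b$) and compares the two characters there; you instead make the passage to this stabiliser explicit as a twist by the unit $(\det[s_a\,|\,s_b])^{l\theta_p}$. Your formulation has the advantage of naming the isomorphism of equivariant line bundles rather than just checking equality on a transversal, but the content is identical.
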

\begin{proof}
(a) The maps on the sets of morphisms are given by
composition with the base change maps that transform $s_a,s_b$ to $(1,0),(0,1)$
and forgetting the automorphisms of the spaces corresponding to the vertices $q_a,q_b$.
The inverse functor is given on the objects by maps $(s_i)_{i\neq a,b}\mapsto(s_i)_i$
setting $s_a=(1,0)$, $s_b=(0,1)$ and on the morphisms by taking those
automorphisms of the spaces corresponding to the vertices $q_a,q_b$ such that
$s_a,s_b$ remain fixed. One checks that both compositions of these
two functors are isomorphic to the identity functors.\\
(b) The fibres are the line segments $[\bar{\theta},e_a+e_b]\setminus\{e_a+e_b\}$
for $\bar{\theta}\in W_{\{\{a,b\},\{a,b\}^\complement\}}$.
All $\theta$ contained in this line segment are elements of the same GIT equivalence class 
except $\bar{\theta}$. But $R^{\bar{\theta}}(Q_{n+2})$ differs from 
$R^\theta(Q_{n+2})$ for the other $\theta$ only outside $R(Q_{n+2})_{a,b}$.\\
(c) The isomorphism (\ref{eq:isom-Qn+2-Pn}) identifies the structure sheaves of the 
two stacks. We have to verify that the additional multiplications by the corresponding 
characters coincide. It suffices to consider the groupoids of $Y$-valued points for 
$S$-schemes $Y$ of the two given groupoid schemes. The elements of 
$((\GL(2)\times(\G_m)^n)/\G_m)(Y)$ fixing $s_a=(1,0)$, $s_b=(0,1)$ are of the 
form $\Big(\left(\begin{smallmatrix}\alpha_a&0\\0&\alpha_b\end{smallmatrix}\right),
\alpha_a,\alpha_b,(\alpha_i)_{i\neq a,b}\Big)$,
the corresponding element
$(\beta_1,\beta_2,(\alpha_i)_{i\neq a,b})\in((\G_m\times\G_m)\times(\G_m)^n)/\G_m)(Y)$ 
satisfies $\beta_1=\alpha_a$, $\beta_2=\alpha_b$.
Applying the character corresponding to 
$l\theta=(l\eta,l\theta_1,\ldots,l\theta_{n+2})$ we obtain
$(\alpha_a\alpha_b)^{l\eta}\alpha_a^{l\theta_a}\alpha_b^{l\theta_b}
\prod_{i\neq a,b}\alpha_i^{l\theta_i}$ and applying $l\theta'$ we have 
$\beta_1^{l\eta'_1}\beta_2^{l\eta'_2}\prod_{i\neq a,b}\alpha_i^{l\theta'_i}$.
As $\eta=-1$, this coincides for $\theta'_i=\theta_i$, $\eta'_1=\theta_a-1$,
$\eta'_2=\theta_b-1$.
\end{proof}

\begin{cor}\label{cor:Qn+2-P_n}
Let $a,b\in\{1,\ldots,n+2\}$, $a\neq b$.\\
{\rm (a)} For $\theta\in\Delta(2,n+2)_{a,b}\setminus W_{\{\{a,b\},\{a,b\}^\complement\}}$
a free representation $V=(s_1,\ldots,s_{n+2})$ of $Q_{n+2}$ is $\theta$-(semi)stable 
if and only if $V$ satisfies $s_a\cap 0=\emptyset$, $s_b\cap 0=\emptyset$, 
$\G_m s_a\cap\G_ms_b=\emptyset$ and its image under {\rm(\ref{eq:maprepr-Qn+2-P_n})} 
is a $\theta'$-(semi)stable representation of $P_n$, where $\theta'$ is the image of 
$\theta$ under the map {\rm(\ref{eq:proj-Qn+2-Pn})}.\\
{\rm (b)} The map {\rm(\ref{eq:proj-Qn+2-Pn})} defines a bijection between
the GIT equivalence classes in $\Delta^1\times\Delta^{n-1}$ and in
$\Delta(2,n+2)_{a,b}\setminus W_{\{\{a,b\},\{a,b\}^\complement\}}$ 
(resp.\ $ W_{\{\{a,b\},\{a,b\}^\complement\}}$).\\
{\rm (c)} For  $\theta\in\Delta(2,n+2)_{a,b}\setminus W_{\{\{a,b\},\{a,b\}^\complement\}}$
the isomorphism {\rm(\ref{eq:isom-Qn+2-Pn})} induces an isomorphism of stacks
\[
[R^\theta(Q_{n+2})/((\GL(2)\times(\G_m)^n)/\G_m)]\;\to\;
[R^{\theta'}(P_n)/((\G_m\times\G_m)\times(\G_m)^n)/\G_m)]
\]
and thus an isomorphism of their moduli spaces
\[
\mathcal M^\theta(Q_{n+2})\;\to\;\mathcal M^{\theta'}(P_n).
\]
These isomorphisms determine an isomorphism of the inverse systems of the moduli spaces 
$\mathcal M^\theta(Q_{n+2})$ for $\theta\in\Delta(2,n+2)_{a,b}\setminus 
W_{\{\{a,b\},\{a,b\}^\complement\}}$ and $\mathcal M^{\theta'}(P_n)$ 
for $\theta'\in\Delta^1\times\Delta^{n-1}$. 
\end{cor}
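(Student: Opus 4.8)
The plan is to deduce Corollary \ref{cor:Qn+2-P_n} directly from Proposition \ref{prop:Qn+2-P_n} by combining part (a) of the proposition (the stack isomorphism) with parts (b) and (c) (compatibility of stability conditions and linearisations under the projection (\ref{eq:proj-Qn+2-Pn})). First I would prove part (a) of the corollary: for $\theta\in\Delta(2,n+2)_{a,b}\setminus W_{\{\{a,b\},\{a,b\}^\complement\}}$, a $\theta$-semistable representation $V=(s_1,\ldots,s_{n+2})$ of $Q_{n+2}$ satisfies $s_a\cap 0=s_b\cap 0=\emptyset$ and $\G_m s_a\cap s_b=\emptyset$. Indeed, one argues with the stability inequalities of Lemma \ref{lemma:repr-pol-Qn}: if $s_a=0$ then the subrepresentation supported at $q_a$ has positive $\theta$-weight for $\theta$ in this region (since the coefficient $\theta_a$ there is large, $\theta_a+\theta_b>\sum_{i\neq a,b}\theta_i$), contradicting semistability; similarly $s_b\neq 0$, and if $\G_m s_a=\G_m s_b$ then the $1$-dimensional sub $V'$ with $V'_p=\langle s_a\rangle$, $V'_{q_a}=V'_{q_b}=k$ has $\theta(V')=\theta_a+\theta_b-\eta>0$ in the region $\Delta(2,n+2)_{a,b}$ (using $\eta=-\tfrac12\sum\theta_i$ and the defining inequality), again a contradiction. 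Hence $V$ lies in $R(Q_{n+2})_{a,b}$, so the map (\ref{eq:maprepr-Qn+2-P_n}) applies; by Proposition \ref{prop:Qn+2-P_n}(b) the $\theta$-(semi)stable locus in $R(Q_{n+2})_{a,b}$ depends only on the image $\theta'$, and under the stack isomorphism (\ref{eq:isom-Qn+2-Pn}) of Proposition \ref{prop:Qn+2-P_n}(a) together with the line bundle identification of Proposition \ref{prop:Qn+2-P_n}(c), the $\theta$-(semi)stable locus corresponds to the $\theta'$-(semi)stable locus of $R(P_n)$. Concretely, (semi)stability of a point of a quotient stack with respect to a linearised line bundle is intrinsic to the stack-with-line-bundle, so Proposition \ref{prop:Qn+2-P_n}(a) and (c) transport it verbatim, giving part (a) of the corollary.

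For part (b), I would observe that the description of walls in Remarks \ref{rem:Qn-weightspace} and \ref{rem:Pn-weightspace}, together with the explicit matching of walls worked out just before the proposition, shows that the affine projection (\ref{eq:proj-Qn+2-Pn}) carries the walls $W_{\{J,J^\complement\}}$ of $\Delta(2,n+2)$ meeting the interior of $\Delta(2,n+2)_{a,b}$ (those with $a,b$ separated) bijectively onto the inner walls $W_{J\setminus\{a\}}$ of $\Delta^1\times\Delta^{n-1}$, and the outer walls $W_i$ ($i\neq a,b$) onto $W_{q_i}$, while the two faces through $e_a+e_b$ project to the two outer walls $W_{p_1},W_{p_2}$. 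Since GIT equivalence classes are precisely the open cells and the wall-strata of these polytopal decompositions, a bijection of the wall arrangements gives a bijection of GIT classes; one checks separately that (\ref{eq:proj-Qn+2-Pn}) restricted to the boundary face $W_{\{\{a,b\},\{a,b\}^\complement\}}$ is the identification with $\Delta^1\times\Delta^{n-1}$ used throughout, hence a bijection of GIT classes there too. Part (c) is then immediate: for generic or non-generic $\theta$ in $\Delta(2,n+2)_{a,b}\setminus W_{\{\{a,b\},\{a,b\}^\complement\}}$, restricting the isomorphism (\ref{eq:isom-Qn+2-Pn}) to the $\theta$-semistable substack yields by part (a) of the corollary an isomorphism onto the $\theta'$-semistable substack of $[R(P_n)/\cdots]$; passing to adequate moduli spaces (functorially, using the universal property in item (5) after Theorem \ref{thm:adequatemodulispace}, or the fineness in Theorem \ref{thm:modulireprquiver} for generic weights) gives $\mathcal M^\theta(Q_{n+2})\cong\mathcal M^{\theta'}(P_n)$. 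Finally, for a specialisation relation $C_{\theta_0}\subseteq\overline{C}_\theta$ inside $\Delta(2,n+2)_{a,b}\setminus W_{\{\{a,b\},\{a,b\}^\complement\}}$, the inclusion of semistable loci $R^\theta(Q_{n+2})\subseteq R^{\theta_0}(Q_{n+2})$ corresponds under (\ref{eq:isom-Qn+2-Pn}) to $R^{\theta'}(P_n)\subseteq R^{\theta'_0}(P_n)$ (because the correspondence of walls in (b) is order-preserving for the face poset), so the transition morphisms $\phi_{\theta,\theta_0}$ match up with $\phi_{\theta',\theta'_0}$; this is exactly the asserted isomorphism of inverse systems.

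The main obstacle I anticipate is part (a) of the corollary, i.e.\ showing that $\theta$-(semi)stability of a representation of $Q_{n+2}$ in the region $\Delta(2,n+2)_{a,b}$ \emph{forces} $s_a,s_b$ to be nonzero and non-parallel, so that one is genuinely inside $R(Q_{n+2})_{a,b}$ where Proposition \ref{prop:Qn+2-P_n} applies — this requires a careful bookkeeping of which subrepresentations become destabilising as $\theta$ approaches the vertex $e_a+e_b$, using the precise half-space description of $\Delta(2,n+2)_{a,b}$. Once that membership is established, everything else is a matter of transporting intrinsic notions (stability, linearised line bundles, adequate moduli spaces, transition maps) across the already-constructed equivalence of stacks, which is routine given the results quoted above.
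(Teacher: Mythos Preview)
Your proposal is correct and is exactly the argument the paper has in mind: the paper gives no separate proof of this corollary, treating it as an immediate consequence of Proposition \ref{prop:Qn+2-P_n} together with the wall-matching discussion preceding it, and you have simply spelled out those implicit steps. One small slip: for the subrepresentation $V'$ with $V'_p=\langle s_a\rangle$, $V'_{q_a}=V'_{q_b}=k$ you should get $\theta(V')=\eta+\theta_a+\theta_b=\theta_a+\theta_b-1$ (not $\theta_a+\theta_b-\eta$), which is indeed $>0$ precisely on $\Delta(2,n+2)_{a,b}\setminus W_{\{\{a,b\},\{a,b\}^\complement\}}$, so the conclusion stands.
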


\subsection{Moduli spaces for $P_n,Q_n$ and GIT quotients of products of $\P^1$'s}
\label{subsec:quivermoduli-GITP1}

We compare the moduli spaces of representations of $Q_n$ and $P_n$ to GIT quotients 
of $(\P^1)^n$ by the quotient groups 
$((\GL(2)\times(\G_m)^n)/\G_m)/((\G_m\times(\G_m)^n)/\G_m)\cong\PGL(2)$ and
$(((\G_m)^2\times(\G_m)^n)/\G_m)/((\G_m\times(\G_m)^n)/\G_m)\cong\G_m$.

\medskip

We have $\NS((\P^1)^n)=\Pic((\P^1)^n)\cong\Z^n$, where 
$(\theta_1,\ldots,\theta_n)\in\Z^n$ corresponds to the line bundle 
$\O_{(\P^1)^n}(\theta_1,\ldots,\theta_n)=\O_{\P^1}(\theta_1)\boxtimes\ldots\boxtimes
\O_{\P^1}(\theta_n)$. A line bundle 
$\O_{(\P^1)^n}(\theta_1,\ldots,\theta_n)$ with 
$\theta_i\in 2\Z$ for all $i$ has a $\PGL(2)$-linearisation ($\O_{\P^1}(-2)$
being the canonical sheaf of $\P^1$), and if an element of $\Pic((\P^1)^n)$ has a 
$\PGL(2)$-linearisation then this linearisation is unique. Thus we can identify 
$\NS^{\PGL(2)}((\P^1)^n)_\Q=\Pic^{\PGL(2)}((\P^1)^n)_\Q\cong\Q^n$ 
with $H(Q_n)$ by 
$(\theta_1,\ldots,\theta_n)\leftrightarrow(\eta=-\frac{1}{2}\sum_i\theta_i,\theta_1,
\ldots,\theta_n)$.

\begin{prop}\label{prop:Q_n-GITP1}
Let $R^*(Q_n)=R(Q_n)\setminus\bigcup_i\{s_i=0\}\cong(\A^2\setminus\{0\})^n$.\\
{\rm (a)} The morphism $R^*(Q_n)\to(\P^1)^n$ induces an isomorphism of stacks
\begin{equation}\label{eq:isom-Q_n-GITP1}
[R^*(Q_n)/((\GL(2)\times(\G_m)^n)/\G_m)]\;\to\;[(\P^1)^n/\PGL(2)].
\end{equation}
{\rm (b)} Let $\theta=(\eta,\theta_1,\ldots,\theta_n)\in H(Q_n)$ be integral. 
Then the line bundle $\O_{(\P^1)^n}(\theta_1,\ldots,\theta_n)$ has a 
(unique) $\PGL(2)$-linearisation and the equivariant line bundles $\O_{R^*(Q_n)}$ with 
$\theta$-linearisation and $\O_{(\P^1)^n}(\theta_1,\ldots,\theta_n)$ define 
isomorphic line bundles on the quotient stacks under the identification 
in {\rm(\ref{eq:isom-Q_n-GITP1})}.
\end{prop}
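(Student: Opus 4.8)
The plan is to prove both parts by unraveling the definition of the quotient stack $[R^*(Q_n)/((\GL(2)\times(\G_m)^n)/\G_m)]$ and comparing it with $[(\P^1)^n/\PGL(2)]$ as stacks over $S$-schemes. First I would recall that $R^*(Q_n) = (\A^2\setminus\{0\})^n$ carries the natural action of $\GL(2)\times(\G_m)^n$, where $\GL(2)$ acts diagonally on all factors and the $i$-th copy of $\G_m$ acts by scaling on the $i$-th factor, and that the diagonal $\G_m\subseteq\GL(2)\times(\G_m)^n$ embedded as $t\mapsto(t\cdot\mathrm{id},t^{-1},\ldots,t^{-1})$ (or the analogous sign convention used in the paper) acts trivially, so the action descends to the quotient group $G:=(\GL(2)\times(\G_m)^n)/\G_m$. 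The morphism $R^*(Q_n)\to(\P^1)^n$ is the product of the $n$ quotient maps $\A^2\setminus\{0\}\to\P^1$; it is $G$-equivariant for the homomorphism $G\to\PGL(2)$ induced by projection to $\GL(2)$ followed by $\GL(2)\to\PGL(2)$. The kernel of $G\to\PGL(2)$ is $((\G_m\times(\G_m)^n)/\G_m)$, which is exactly the group acting on the fibres of $R^*(Q_n)\to(\P^1)^n$, and this kernel acts simply transitively on each fibre: a fibre over $(\ell_1,\ldots,\ell_n)\in(\P^1)^n$ is $\prod_i(\ell_i\setminus\{0\})$, a torsor under $(\G_m)^n$ (scaling each coordinate), while the central $\G_m$ in $\GL(2)$ also scales — and modding by the diagonal $\G_m$ matches things up so that $R^*(Q_n)\to(\P^1)^n$ is a $\ker(G\to\PGL(2))$-torsor.

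The key structural step is therefore: \textbf{$R^*(Q_n)\to(\P^1)^n$ is a torsor under the group $N:=\ker(G\to\PGL(2))$, and $N$ acts freely}. Granting this, there is a standard ``quotient in stages'' statement for quotient stacks: if $G$ acts on $X$, $N\trianglelefteq G$ is normal with $X\to X/N$ an $N$-torsor, then $[X/G]\cong[(X/N)/(G/N)]$ canonically. Here $X=R^*(Q_n)$, $N$ as above, $X/N=(\P^1)^n$, and $G/N\cong\PGL(2)$, which gives exactly the isomorphism \eqref{eq:isom-Q_n-GITP1}. I would check the torsor claim concretely: working fppf-locally on $(\P^1)^n$ one trivialises each line $\ell_i$, which reduces $R^*(Q_n)$ locally to $((\P^1)^n\text{ locus})\times(\A^1\setminus\{0\})^n$, visibly an $N$-torsor; freeness of $N$ is then immediate since $(\G_m)^n$ acts freely on $(\A^1\setminus\{0\})^n$ and passing to the quotient by the central $\G_m$ (which also embeds as acting on these coordinates) does not introduce stabilisers because the $\GL(2)$-part of any element of $N$ is central hence its projection to $\PGL(2)$ is trivial, consistent with $N$ being precisely the scaling torus. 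One should be mildly careful with the sign convention in the embedding $\G_m\hookrightarrow\GL(2)\times(\G_m)^n$ and verify $G/N\cong\PGL(2)$ via the exact sequence $1\to N\to G\to\PGL(2)\to 1$, i.e.\ that $\GL(2)\times(\G_m)^n\to\PGL(2)$ is surjective with kernel the preimage of $\G_m\cdot(\G_m)^n$ modulo the diagonal.

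For part (b), I would argue on the quotient stacks using part (a): an equivariant line bundle on $[X/G]$ is the same as a $G$-equivariant line bundle on $X$, and similarly for $[(\P^1)^n/\PGL(2)]$. Pulling $\O_{(\P^1)^n}(\theta_1,\ldots,\theta_n)$ (with its unique $\PGL(2)$-linearisation, which exists precisely because each $\theta_i\in 2\Z$ would be needed for each factor individually — but here one only needs the \emph{total} bundle to be $\PGL(2)$-linearisable, and the integrality of the weight $\theta=(\eta,\theta_1,\ldots,\theta_n)\in H(Q_n)$ with $\eta=-\tfrac12\sum\theta_i$ is exactly the condition making the character $\prod_q\det(g_q)^{-\theta_q}$ well-defined on $G$) back along $R^*(Q_n)\to(\P^1)^n$ and identifying its $G$-linearisation, I would compute the character by which $G$ acts on fibres and match it with the character $\theta$ defining $\O_{R^*(Q_n)}$ with $\theta$-linearisation. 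Concretely: the pullback of $\O_{\P^1}(1)$ from the $i$-th factor, as a $\GL(2)$-equivariant bundle on $\A^2\setminus\{0\}$, is the trivial bundle with $\GL(2)$ acting through $\det^{-1}$ on the fibre (since $\O_{\P^1}(-2)=\omega_{\P^1}$ carries the $\det^{-2}$-linearisation) and with the $i$-th $\G_m$ acting through its standard character; tensoring these with multiplicities $\theta_i$ and descending through the diagonal-$\G_m$ quotient gives precisely the character $\theta$. The only real subtlety is bookkeeping of signs and of the $\det$-twist coming from $\O_{\P^1}(-2)=\omega_{\P^1}$; the main obstacle is getting the normalisation of the $\PGL(2)$-linearisation of $\O_{(\P^1)^n}(\theta_1,\ldots,\theta_n)$ to line up with the sign convention for $\theta$-stability fixed in the paper's conventions, after which the proof is a direct computation.
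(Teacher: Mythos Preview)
Your proposal is correct and follows essentially the same strategy as the paper: both identify the normal subgroup $N=(\G_m\times(\G_m)^n)/\G_m\cong(\G_m)^n$, show that $R^*(Q_n)\to(\P^1)^n$ is an $N$-torsor (the paper does this factor-by-factor via $\A^2\setminus\{0\}\to\P^1$), and conclude part (a) by quotient-in-stages---the paper spelling this out on $Y$-valued points with explicit inverse functors via fibred products, you invoking the general lemma. For part (b) the paper computes the pushforward of the line bundle by taking $(\G_m)^n$-invariant sections over the prestack of trivial torsors, which is the mirror image of your pullback-and-match-characters computation and amounts to the same bookkeeping.
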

\begin{proof}
Consider the operation of the subgroup $(\G_m)^n\cong(\G_m\times(\G_m)^n)/\G_m
\subset(\GL(2)\times(\G_m)^n)/\G_m$ on $R^*(Q_n)\cong(\A^2\setminus\{0\})^n$.
The operation of $\G_m$ on $\A^2\setminus\{0\}$ is free and $\A^2\setminus\{0\}\to\P^1$
a $\G_m$-torsor. 
The structure sheaf on $\A^2\setminus\{0\}$ with $\G_m$-linearisation by $\theta$ 
descends to the line bundle $\O_{\P^1}(\theta)$. Thus $R^*(Q_n)\to(\P^1)^n$ is a 
$(\G_m)^n$-torsor and $\O_{R^*(Q_n)}$ with $(\G_m)^n$-linearisation by 
$(\theta_1,\ldots,\theta_n)$ descends to the line bundle 
$\O_{(\P^1)^n}(\theta_1,\ldots,\theta_n)$.\\
(a) A $Y$-valued point of $[R^*(Q_n)/((\GL(2)\times(\G_m)^n)/\G_m)]$ for some 
$S$-scheme $Y$ corresponds to a $(\GL(2)\times(\G_m)^n)/\G_m$-torsor
$E\to Y$ with an equivariant morphism $E\to R^*(Q_n)$.
Taking quotients by $(\G_m)^n\subset (\GL(2)\times(\G_m)^n)/\G_m$ 
we obtain a $\PGL(2)$-equivariant morphism $E/(\G_m)^n\to(\P^1)^n$
where $E/(\G_m)^n\to Y$ is a $\PGL(2)$-torsor. The diagram
\[
\begin{array}{ccc}
E&\longrightarrow&(\A^2\setminus\{0\})^n\\
\downarrow&&\downarrow\\
E/(\G_m)^n&\longrightarrow&(\P^1)^n\\
\end{array}
\]
is cartesian and its vertical arrows are $(\G_m)^n$-torsors. For a $Y$-valued point of 
\linebreak
$[(\P^1)^n/\PGL(2)]$, a $\PGL(2)$-torsor $\overline{E}\to Y$ with equivariant morphism
to $(\P^1)^n$, we construct a $(\GL(2)\times(\G_m)^n)/\G_m$-torsor over $Y$ with an 
equivariant morphism to $(\A^2\setminus\{0\})^n$ by taking the fibred product 
$\overline{E}\times_{(\P^1)^n}(\A^2\setminus\{0\})^n$.
One verifies that this way two functors between
$[R^*(Q_n)/((\GL(2)\times(\G_m)^n)/\G_m)]$ and $[(\P^1)^n/\PGL(2)]$
are defined whose compositions are isomorphic to the identity functors.\\
(b) As the categories of sheaves do not change under stackification we may work with 
the prestack $[R^*(Q_n)/((\GL(2)\times(\G_m)^n)/\G_m)]^\textit{pre}$ of trivial 
$(\GL(2)\times(\G_m)^n)/\G_m$-torsors $E\to Y$ with an equivariant morphism 
$E\to R^*(Q_n)$, and its image in $[(\P^1)^n/\PGL(2)]$ under the isomorphism 
(\ref{eq:isom-Q_n-GITP1}). 
Objects in $[R^*(Q_n)/((\GL(2)\times(\G_m)^n)/\G_m)]^\textit{pre}$ over an 
$S$-scheme $Y$ correspond to morphisms $Y\to R^*(Q_n)$, a morphism 
$(g,y)\colon Y\to (\GL(2)\times(\G_m)^n)/\G_m\times R^*(Q_n)$ gives an arrow 
$y\to gy$ over $\id_Y$.

Let $\mathscr L$ be the line bundle on 
$[R^*(Q_n)/((\GL(2)\times(\G_m)^n)/\G_m)]^\textit{pre}$ defined by the 
equivariant line bundle $\O_{R^*(Q_n)}$ with linearisation by $\theta$, in 
particular $\mathscr L(y)=\Gamma(Y,y^*\O_{R^*(Q_n)})$ for $y\colon Y\to R^*(Q_n)$. 
We describe the push-forward $\overline{\mathscr L}$ with respect to the isomorphism 
(\ref{eq:isom-Q_n-GITP1}), which by definition is given by 
$\overline{\mathscr L}(\overline{y})
=\varprojlim_{\overline{y'}\to\overline{y}}\mathscr L(y')$ where $\overline{y'}$ is the 
image of the object $y'$ under (\ref{eq:isom-Q_n-GITP1}).
For open embeddings $\overline{y}\colon\overline{U}\hookrightarrow(\P^1)^n$ and 
$y\colon U\hookrightarrow R^*(Q_n)$ such that $U\subseteq R^*(Q_n)$ is the preimage of
$\overline{U}\subseteq(\P^1)^n$ we have
$\overline{\mathscr L}(\overline{y})
=\varprojlim_{\overline{y'}\to\overline{y}}\mathscr L(y')
=\varprojlim_{h\in(\G_m)^n(U),\overline{hy}\to\overline{y}}\mathscr L(hy)
=\Gamma(U,\O_{R^*(Q_n)})^{(\G_m)^n}
=\Gamma(\overline{U},\O_{(\P^1)^n}(\theta_1,\ldots,\theta_n))$.
Further we have the natural restriction maps. Thus the line bundle 
$\overline{\mathscr L}$ comes from the line bundle 
$\O_{(\P^1)^n}(\theta_1,\ldots,\theta_n)$ on $(\P^1)^n$ with some $\PGL(2)$-linearisation.
\end{proof}

\begin{cor}\label{cor:reprQn-P1n-GIT}
{\rm (a)} We have an identification
$\Pic^{\PGL(2)}((\P^1)^n)\cong H(Q_n)\cap\Z^{n+1}$ by 
$(\theta_1,\ldots,\theta_n)\leftrightarrow(\eta=-\frac{1}{2}\sum_i\theta_i,\theta_1,
\ldots,\theta_n)$.\\
{\rm (b)} For $(\eta,\theta_1,\ldots,\theta_n)\in H(Q_n)$ such that $\theta_i>0$ 
for all $i$ a free representation $V=(s_1,\ldots,s_n)$ of $Q_n$ is 
$(\eta,\theta_1,\ldots,\theta_n)$-(semi)stable if and only if 
$s_i\cap 0=\emptyset$ for all $i$ and its image under 
{\rm(\ref{eq:isom-Q_n-GITP1})} is a $(\theta_1,\ldots,\theta_n)$-(semi)stable 
section of $(\P^1)^n$.\\
{\rm (c)} The image of $\interior C(Q_n)\subset H(Q_n)$ under the isomorphism 
$\Pic^{\PGL(2)}((\P^1)^n)_\Q\cong H(Q_n)$ is the $\PGL(2)$-ample cone in 
$\Pic^{\PGL(2)}((\P^1)^n)_\Q$. The GIT classes in these open cones coincide.\\
{\rm (d)} For $\theta\in H(Q_n)\cong\Pic^{\PGL(2)}((\P^1)^n)_\Q$ such that 
$\theta_i>0$ for all $i$ the isomorphism {\rm(\ref{eq:isom-Q_n-GITP1})} induces 
isomorphisms of the stacks of $\theta$-(semi)stable points, of their moduli spaces 
and their inverse systems.
\end{cor}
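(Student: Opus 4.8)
The plan is to read off all four assertions from Proposition~\ref{prop:Q_n-GITP1}, which already supplies the isomorphism of stacks~(\ref{eq:isom-Q_n-GITP1}) together with the matching of the relevant equivariant line bundles; everything else is unwinding. For~(a): since $\Pic((\P^1)^n)=\Z^n$, every line bundle is some $\O_{(\P^1)^n}(\theta_1,\ldots,\theta_n)$, and---extending the remark preceding Proposition~\ref{prop:Q_n-GITP1}---such a bundle carries a $\PGL(2)$-linearisation for the diagonal action precisely when the weight $\sum_i\theta_i\bmod 2$ by which the centre of $\GL(2)$ acts on it can be killed by a twist by a power of $\det$, i.e.\ precisely when $\sum_i\theta_i$ is even; the linearisation is then unique since $\PGL(2)$ has no nontrivial characters. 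Hence $(\theta_1,\ldots,\theta_n)\mapsto(\eta=-\tfrac12\sum_i\theta_i,\theta_1,\ldots,\theta_n)$ is a group isomorphism $\Pic^{\PGL(2)}((\P^1)^n)\cong H(Q_n)\cap\Z^{n+1}$, and rationally $\Pic^{\PGL(2)}((\P^1)^n)_\Q\cong H(Q_n)$.

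For~(b): by \cite{Ki94}, $\theta$-(semi)stability of a representation $V=(s_1,\ldots,s_n)$ of $Q_n$ over $Y$ is, fibrewise over geometric points, GIT-(semi)stability in $R(Q_n)$ for the $\theta$-linearised $\O_{R(Q_n)}$. If $\theta_i>0$ for all $i$ and some $s_i$ vanishes at a geometric point, then the one-dimensional subrepresentation placed at the vertex $q_i$---a genuine subrepresentation exactly because $\phi_{\alpha_i}$ then vanishes there---has $\theta$-weight $\theta_i>0$, contradicting semistability; so a $\theta$-semistable $V$ factors through $R^*(Q_n)$. Moreover $R(Q_n)\setminus R^*(Q_n)=\bigcup_i\{s_i=0\}$ has codimension $2$, so semi-invariant sections extend across it and GIT-(semi)stability is unchanged on passing to the open subscheme $R^*(Q_n)$. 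Combining this with Proposition~\ref{prop:Q_n-GITP1}---which identifies the $\theta$-linearised $\O_{R^*(Q_n)}$ with $\O_{(\P^1)^n}(\theta_1,\ldots,\theta_n)$ across~(\ref{eq:isom-Q_n-GITP1})---and the fact that (semi)stability of a point is intrinsic to the pair consisting of a stack and a line bundle, one obtains the equivalence in~(b).

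For~(c): Remark~\ref{rem:Qn-weightspace} gives $C(Q_n)=\{\theta\in H(Q_n)\mid 0\le\theta_i\le\tfrac12\sum_j\theta_j\}$, hence $\interior C(Q_n)=\{0<\theta_i<\tfrac12\sum_j\theta_j\}$; on the other side $\O_{(\P^1)^n}(\theta_1,\ldots,\theta_n)$ is ample iff all $\theta_i>0$, and by the classical GIT of $n$ points on $\P^1$ the diagonal $\PGL(2)$-stable locus of $(\P^1)^n$ is non-empty iff $\theta_i<\tfrac12\sum_j\theta_j$ for all $i$. Thus $\interior C(Q_n)$ maps onto the cone of ample $\PGL(2)$-linearised classes with non-empty stable locus, that is, onto the $\PGL(2)$-ample cone. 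That the GIT classes agree there follows from~(b): for $\theta,\theta'$ in this cone, $R^\theta(Q_n)=R^{\theta'}(Q_n)$ holds iff the associated semistable loci of $(\P^1)^n$ coincide, so the wall-and-chamber structures are identified.

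For~(d): when $\theta_i>0$ for all $i$, the isomorphism~(\ref{eq:isom-Q_n-GITP1}) restricts, by~(b), to isomorphisms of the open substacks of $\theta$-stable, resp.\ $\theta$-semistable, points onto those of $(\theta_1,\ldots,\theta_n)$-stable, resp.\ semistable, points of $(\P^1)^n$; passing to adequate moduli spaces (Theorem~\ref{thm:adequatemodulispace} and \cite{Al14}), which are unique and coincide with the respective GIT quotients, yields $\mathcal M^\theta(Q_n)\cong(\P^1)^n/_{\!\theta}\PGL(2)$ and likewise on the stable loci. Since on both sides the transition morphisms of the inverse systems are induced by the (corresponding) open inclusions of semistable loci attached to $C_{\theta_0}\subseteq\overline C_\theta$, these isomorphisms commute with the transition morphisms, so the two inverse systems (hence their limits) are identified. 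I do not expect a genuinely hard step: the one point requiring care is the bookkeeping in~(c)---checking that $\interior C(Q_n)$ is exactly the $\PGL(2)$-ample cone and that the walls and chambers on the two sides correspond---but even that reduces mechanically to the identification of stacks and linearisations supplied by Proposition~\ref{prop:Q_n-GITP1}.
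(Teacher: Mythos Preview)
Your proposal is correct and follows precisely the approach the paper intends: the corollary is stated there without proof, as an immediate consequence of Proposition~\ref{prop:Q_n-GITP1} together with the remarks preceding it, and your argument unpacks exactly that. The only additions you make beyond what the paper leaves implicit---the codimension-$2$ extension of semi-invariants in (b) and the explicit comparison of the cones in (c)---are the natural details one would supply, and they are handled correctly.
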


One can also compare these stacks with $[G(2,n)/((\G_m)^n/\G_m)]$,
cf.\ \cite[2.4]{Ka93a}.

\medskip

Similarly, one can show the analogous results for the quiver $P_n$.
In the case of the quiver $P_n$ we have
$\NS^{\G_m}((\P^1)^n)=\Pic^{\G_m}((\P^1)^n)\cong\Z^{n+1}$, 
where $(\theta_1,\dots,\theta_n)\in\Z^n$ defines an element
$\O_{(\P^1)^n}(\theta_1,\ldots,\theta_n)\in\Pic((\P^1)^n)$ and
for each $\O_{(\P^1)^n}(\theta_1,\ldots,\theta_n)$ the set of $\G_m$-linearisations
is a principal homogeneous space under the character group $\Z$ of $\G_m$.
Below we identify $\Pic^{\G_m}((\P^1)^n)$ with $H(P_n)\cap\Z^{n+2}$, where
$(\eta_1,\eta_2,\theta_1,\ldots,\theta_n)\in H(P_n)\cap\Z^{n+2}$ corresponds 
to the line bundle $\O_{(\P^1)^n}(\theta_1,\ldots,\theta_n)$ with a certain
$\G_m$-linearisation, and the $\G_m$-linearisations on 
$\O_{(\P^1)^n}(\theta_1,\ldots,\theta_n)$ corresponding 
to $(\eta_1,\eta_2,\theta_1,\ldots,\theta_n)$ and 
$(\eta_1',\eta_2',\theta_1,\ldots,\theta_n)$ differ by the character
$(\eta,-\eta)=(\eta_1-\eta_1',\eta_2-\eta_2')$ of $(\G_m)^2/\G_m\cong\G^m$.

\begin{prop}\label{prop:P_n-GITP1}
Let $R^*(P_n)=R(P_n)\setminus\bigcup_i\{s_i=0\}\cong(\A^2\setminus\{0\})^n$.\\
{\rm (a)} The morphism $R^*(P_n)\to(\P^1)^n$ induces an isomorphism of stacks
\begin{equation}\label{eq:isom-P_n-GITP1}
[R^*(P_n)/(((\G_m)^2\times(\G_m)^n)/\G_m)]\;\to\;[(\P^1)^n/\G_m].
\end{equation}
{\rm (b)} Let $(\eta_1,\eta_2,\theta_1,\ldots,\theta_n)\in H(P_n)$ be integral. 
Then the line bundle $\O_{R^*(P_n)}$ with\linebreak
$((\G_m)^2\times(\G_m)^n)/\G_m$-linearisation
given by $(\eta_1,\eta_2,\theta_1,\ldots,\theta_n)$ descends to the line bundle 
$\O_{(\P^1)^n}(\theta_1,\ldots,\theta_n)$ with a certain $\G_m$-linearisation, and both 
define isomorphic line bundles on the quotient stacks under the identification 
in {\rm(\ref{eq:isom-P_n-GITP1})}.
\end{prop}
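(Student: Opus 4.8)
The plan is to follow the proof of Proposition~\ref{prop:Q_n-GITP1} almost verbatim, the only genuinely new point being the extra freedom in the $\G_m$-linearisation caused by the second parameter. First I would single out, inside the group $((\G_m)^2\times(\G_m)^n)/\G_m$, the subgroup $(\G_m)^n\cong(\G_m\times(\G_m)^n)/\G_m$ where now $\G_m\hookrightarrow(\G_m)^2$ is the diagonal. On $R^*(P_n)\cong(\A^2\setminus\{0\})^n$ an element $(\lambda,(\alpha_i)_i)$ of this subgroup sends $s_i=(\nu_{i,1},\nu_{i,2})$ to $\lambda\alpha_i^{-1}s_i$, i.e.\ it acts by independent scaling on each of the $n$ factors $\A^2\setminus\{0\}$, and the overall diagonal $\G_m=\Delta$ acts trivially; the residual quotient $((\G_m)^2\times(\G_m)^n)/\G_m$ modulo this subgroup is $(\G_m)^2$ modulo its diagonal, acting on $(\P^1)^n$ by simultaneously scaling one homogeneous coordinate in each factor. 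As in the proof of Proposition~\ref{prop:Q_n-GITP1} (or by applying Proposition~\ref{prop:Rs-Mtheta-torsor} to the Kronecker quiver with two arrows and dimension vector $(1,1)$) the operation of each such $\G_m$ on $\A^2\setminus\{0\}$ is free with $\A^2\setminus\{0\}\to\P^1$ a $\G_m$-torsor, and the structure sheaf with $\G_m$-linearisation by $\theta$ descends to $\O_{\P^1}(\theta)$. Hence $R^*(P_n)\to(\P^1)^n$ is a $(\G_m)^n$-torsor and $\O_{R^*(P_n)}$ with $(\G_m)^n$-linearisation $(\theta_1,\ldots,\theta_n)$ descends to $\O_{(\P^1)^n}(\theta_1,\ldots,\theta_n)$.

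For part~(a) I would then run the same torsor argument: a $Y$-valued point of $[R^*(P_n)/(((\G_m)^2\times(\G_m)^n)/\G_m)]$ is a torsor $E\to Y$ with an equivariant morphism $E\to R^*(P_n)$; forming $E/(\G_m)^n$ produces a $\G_m$-torsor over $Y$ together with a $\G_m$-equivariant morphism to $(\P^1)^n$, and the square with vertices $E$, $(\A^2\setminus\{0\})^n$, $E/(\G_m)^n$, $(\P^1)^n$ is cartesian with vertical arrows $(\G_m)^n$-torsors; conversely, from a $\G_m$-torsor $\overline E\to Y$ with equivariant morphism to $(\P^1)^n$ one recovers a torsor over $Y$ as the fibre product $\overline E\times_{(\P^1)^n}(\A^2\setminus\{0\})^n$. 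One checks that these two assignments are mutually quasi-inverse functors, which gives the isomorphism~(\ref{eq:isom-P_n-GITP1}).

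For part~(b) I would again pass to the prestack of trivial torsors (legitimate since stackification does not change the category of sheaves) and compute the pushforward of the line bundle $\mathscr L$ defined by $\O_{R^*(P_n)}$ with $(\eta_1,\eta_2,\theta_1,\ldots,\theta_n)$-linearisation along~(\ref{eq:isom-P_n-GITP1}): for open substacks coming from open $\overline U\subseteq(\P^1)^n$ with preimage $U\subseteq R^*(P_n)$ one obtains $\overline{\mathscr L}(\overline U)=\Gamma(U,\O_{R^*(P_n)})^{(\G_m)^n}=\Gamma(\overline U,\O_{(\P^1)^n}(\theta_1,\ldots,\theta_n))$, compatibly with restriction, so $\overline{\mathscr L}$ comes from $\O_{(\P^1)^n}(\theta_1,\ldots,\theta_n)$ equipped with some $\G_m$-linearisation.

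The one genuinely new bookkeeping point — and the thing I expect to be the only real obstacle — is to pin down which $\G_m$-linearisation occurs, since, unlike in the $Q_n$ case, $\O_{(\P^1)^n}(\theta_1,\ldots,\theta_n)$ carries a whole $\Z$-torsor of $\G_m$-linearisations and $\eta_1,\eta_2$ are now independent data (subject only to $\eta_1+\eta_2=-\sum_i\theta_i$). Writing the residual $\G_m$ as the image of $(\G_m)^2$ modulo its diagonal and evaluating the character corresponding to $(\eta_1,\eta_2,\theta_1,\ldots,\theta_n)$ on the relevant elements, exactly as in Proposition~\ref{prop:Qn+2-P_n}(c), identifies the linearisation and shows it matches the identification $\Pic^{\G_m}((\P^1)^n)\cong H(P_n)\cap\Z^{n+2}$ described just before the proposition. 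This is a short character computation rather than a conceptual difficulty.
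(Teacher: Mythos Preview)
Your proposal is correct and is precisely the approach the paper has in mind: the paper does not give a separate proof of this proposition but simply says ``Similarly, one can show the analogous results for the quiver $P_n$'', referring back to the proof of Proposition~\ref{prop:Q_n-GITP1}. Your write-up is exactly that adaptation, including the correct identification of the subgroup $(\G_m)^n\cong(\G_m\times(\G_m)^n)/\G_m$ via the diagonal in $(\G_m)^2$, and you even anticipate the one new feature---the $\Z$-torsor of $\G_m$-linearisations---which the paper handles in the paragraph preceding the proposition rather than in a proof.
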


\begin{cor}\label{cor:reprPn-P1n-GIT}
{\rm (a)} For $\theta=(\eta_1,\eta_2,\theta_1,\ldots,\theta_n)\in H(P_n)$ such that 
$\theta_i>0$ for all $i$ a free representation $V=(s_1,\ldots,s_n)$ of $P_n$ is 
$\theta$-(semi)stable if and only if $s_i\cap 0=\emptyset$ for all $i$ and its 
image under {\rm(\ref{eq:isom-Q_n-GITP1})} is a (semi)stable section of $(\P^1)^n$
with respect to $\O_{(\P^1)^n}(\theta_1,\ldots,\theta_n)$ with the corresponding 
$\G_m$-linearisation.\\
{\rm (b)} The image of $\interior C(P_n)\subset H(P_n)$ under the isomorphism 
$\Pic^{\G_m}((\P^1)^n)_\Q\cong H(P_n)$ is the $\G_m$-ample cone in 
$\Pic^{\G_m}((\P^1)^n)_\Q$. The GIT classes in these open cones coincide.\\
{\rm (c)} For $\theta\in H(P_n)\cong\Pic^{\G_m}((\P^1)^n)_\Q$ such that 
$\theta_i>0$ for all $i$ the isomorphism {\rm(\ref{eq:isom-Q_n-GITP1})} induces 
isomorphisms of the stacks of $\theta$-(semi)stable points, of their moduli spaces 
and their inverse systems.
\end{cor}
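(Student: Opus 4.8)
The plan is to follow the pattern of Corollary \ref{cor:reprQn-P1n-GIT}: transport King's numerical $\theta$-(semi)stability for representations of $P_n$ \cite{Ki94} through the stack isomorphism (\ref{eq:isom-P_n-GITP1}) of Proposition \ref{prop:P_n-GITP1}, and read off the cone statement from the weight space description of Remark \ref{rem:Pn-weightspace}. For (a), I would first note that if $\theta_i>0$ and $V=(s_1,\ldots,s_n)$ is $\theta$-semistable then $s_i$ cannot vanish, since otherwise the subrepresentation of $V$ supported at the vertex $q_i$ has $\theta$-value $\theta_i>0$; hence the entire $\theta$-(semi)stable locus of $R(P_n)$ already lies in $R^*(P_n)$. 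On $R^*(P_n)$ I would use that GIT (semi)stability \cite{MFK} is detected by invariant global sections of powers of the linearised line bundle: a point is semistable iff such a section is nonzero there with affine nonvanishing locus, and stable iff moreover the orbit through it is closed in that locus with finite stabiliser. As in the proof of Proposition \ref{prop:Q_n-GITP1}, $R^*(P_n)\to(\P^1)^n$ is a $(\G_m)^n$-torsor; invariant sections over $R(P_n)$ and over $R^*(P_n)$ agree because $R(P_n)\setminus R^*(P_n)$ has codimension $2$ in the smooth variety $R(P_n)$, and they descend along the torsor to sections of the corresponding line bundle on $(\P^1)^n$, their nonvanishing loci, the affineness of those loci, closedness of orbits and finiteness of stabilisers matching up likewise; by Proposition \ref{prop:P_n-GITP1}(b) the relevant linearised line bundles correspond under (\ref{eq:isom-P_n-GITP1}). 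This would give the stated description of the $\theta$-(semi)stable representations of $P_n$ as exactly the (semi)stable sections of $(\P^1)^n$ for $\O_{(\P^1)^n}(\theta_1,\ldots,\theta_n)$ with the corresponding $\G_m$-linearisation.

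For (b), by Remark \ref{rem:Pn-weightspace} the cone $C(P_n)$ equals $\{\eta_1,\eta_2\le 0,\ \theta_1,\ldots,\theta_n\ge 0\}\cap H(P_n)$, so $\interior C(P_n)$ is contained in $\{\theta_i>0\text{ for all }i\}$, where part (a) applies; hence over $\interior C(P_n)$ one has $R^\theta(P_n)\neq\emptyset$ exactly when the $\G_m$-semistable locus of $(\P^1)^n$ for the corresponding linearisation is nonempty. Under $H(P_n)\cong\Pic^{\G_m}((\P^1)^n)_\Q$ the region $\{\theta_i>0\}$ is the ample cone of $(\P^1)^n$, and within it the image of $\interior C(P_n)$ is precisely the subcone with nonempty $\G_m$-semistable locus, i.e.\ the $\G_m$-ample cone; since by (a) the (semi)stable loci for corresponding weights agree throughout, so do the GIT equivalence classes. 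For (c), I would restrict (\ref{eq:isom-P_n-GITP1}) to (semi)stable loci to obtain, for $\theta$ with $\theta_i>0$, an isomorphism of $\mathcal X^\theta(P_n)$ with the quotient stack of the $\G_m$-semistable locus of $(\P^1)^n$ and similarly over the stable loci, then pass to adequate moduli spaces (Theorem \ref{thm:adequatemodulispace}); by uniqueness of these, $\mathcal M^\theta(P_n)$ is identified with the GIT quotient $(\P^1)^n/_{\!\theta}\,\G_m$, compatibly with the transition morphisms as $\theta$ ranges over $\interior C(P_n)$, hence the two inverse systems are identified.

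I expect the only genuine work to be in part (a): the purely formal transfer of GIT (semi)stability along the stack isomorphism, together with the matching of linearisations from Proposition \ref{prop:P_n-GITP1}. The point that needs care is that the auxiliary conditions in the definition of (semi)stability --- affineness of the nonvanishing loci, closedness of orbits, finiteness of stabilisers --- are preserved; this should hold because $R^*(P_n)\to(\P^1)^n$ is an honest $(\G_m)^n$-torsor and $R(P_n)\setminus R^*(P_n)$ has codimension $2$, so invariant functions, and hence invariant sections of line bundles, extend across, making the correspondence entirely parallel to the case of $Q_n$. Everything else is bookkeeping.
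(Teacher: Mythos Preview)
Your proposal is correct and follows exactly the route the paper intends: the paper states Corollary \ref{cor:reprPn-P1n-GIT} without proof, as an immediate consequence of Proposition \ref{prop:P_n-GITP1} parallel to Corollary \ref{cor:reprQn-P1n-GIT}, and your argument spells out precisely this parallel. You supply more detail than the paper does (notably the codimension-$2$ extension of invariant sections and the explicit check that $s_i=0$ produces a destabilising subrepresentation at $q_i$), but the underlying approach is the same.
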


In the following, when considering free representations $V=(s_1,\ldots,s_n)$
of $Q_n$ and $P_n$ such that $s_i\cap 0=\emptyset$ up to isomorphism,
we will often write them as tuples of sections of $(\P^1)^n$.

\subsection{The functor of inverse limits of quiver varieties for $P_n$ and $Q_n$}
\label{subsec:functor-limPnQn}

Let $Q\!=\!Q_n$ or $Q\!=\!P_n$. Because all chambers are connected via the 
relative interiors of GIT equivalence classes of codimension one, we can rewrite 
the contravariant functor on $S$-schemes (\ref{eq:functorinvlim}) in 
proposition \ref{prop:functorinvlim} as
\begin{equation}\label{eq:functorinvlim-QnPn}
Y\mapsto
\left\{\left.
(\phi_{V^\theta})_\theta\in\!\!\!\!\!\prod_{\textit{$\theta$ generic}}\!\!\!\!\!
\mathcal M^\theta(Q)(Y)
\;\right|\!
\begin{array}{l}
\forall\,\theta,\theta'\,\textit{generic}\;\;
\forall\:C_\tau\subset\interior C(Q)\;\textit{GIT}\\
\textit{equiv.\ class of codimension one such that}\\
\overline{C}_\tau\subseteq\overline{C}_\theta\cap\overline{C}_{\theta'}\colon
\;\phi_{\theta,\tau}\circ\phi_{V^{\theta}}=\phi_{\theta',\tau}\circ\phi_{V^{\theta'}}
\end{array}\!
\right\}
\end{equation}
where the product is over representatives $\theta$ of the generic GIT equivalence classes
and $\phi_{V^\theta}\colon Y\to\mathcal M^\theta(Q)$ is the morphism determined by a
$\theta$-stable representation $V^\theta$ over $Y$.

\medskip

For a representation $V$ of $Q$ over an $S$-scheme $Y$ stable with respect 
to some generic $\theta$ the polytopes  $\Theta(V(y))$ for the fibres $V(y)$ 
over the geometric points $y$ of $Y$ defined in subsection \ref{subsec:weightspacePnQn} 
have the property that $\{y'\in Y\:|\:\Theta(V(y))\subseteq \Theta(V(y'))\}$ are 
the points of an open subscheme of $Y$. Therefore for two representations $V,V'$ 
we have an open subscheme
\[
U(V,V')\;=\;\big\{\,y\in Y\;\big|\:\interior\big(\Theta(V(y))\cap\Theta(V'(y))\big)
\neq\emptyset\,\big\}\;\subseteq \;Y.
\]
If $V^\theta,V^{\theta'}\!$ are part of a family over $Y$ satisfying the conditions
in (\ref{eq:functorinvlim-QnPn}), then 
\[
U(V^\theta,V^{\theta'})
\;=\;\big\{\,y\in Y\;\big|\:\textit{$V^\theta(y)$ $\theta'$-stable}\,\big\}
\;=\;\big\{\,y\in Y\;\big|\:\textit{$V^{\theta'}\!(y)$ $\theta$-stable}\,\big\}
\] 
and $V^\theta|_{U(V^\theta,V^{\theta'})}\cong V^{\theta'}|_{U(V^\theta,V^{\theta'})}$
by the conditions coming from the walls meeting the interior of the polytopes
$\Theta(V^\theta(y))$, $\Theta(V^{\theta'}\!(y))$.

\medskip

To study the conditions coming from walls that separate polytopes $\Theta(V(y))$ 
we consider the schemes of representations of $Q$ for weights in a GIT equivalence
class $C_\tau$ of codimension one with adjacent chambers $C_\theta,C_{\theta'}$. 
For $Q=Q_n$, $C_\tau\cap\Delta(2,n)$ is contained in a wall of the form 
$W_{\{J,J^\complement\}}$ for some $J\subset\{1,\ldots,n\}$, $2\leq|J|\leq n-2$. 
For $Q=P_n$, $C_\tau\cap\Delta^1\times\Delta^{n-1}$ is contained in a 
wall of the form $W_J$ for some $J\subset\{1,\ldots,n\}$, $1\leq|J|\leq n-1$. 
In case $Q=Q_n$ let $i\in J^\complement$, $j\in J$, in case $Q=P_n$ let $i=0,j=\infty$.
Let $Z_\tau\subset R^\tau(Q)$ be the closed subscheme of strictly 
$\tau$-semistable points and $V^\tau$ its tautological representation
considered as tuple of sections $(s^\tau_1,\ldots,s^\tau_n)$ of $\P^1_{Z_\tau}\to Z_\tau$.
The geometric fibres of $Z_\tau$ over $S$ consist of the following three strata: 
the subset where both conditions $s^\tau_l=s^\tau_j\Leftrightarrow l\in J$ and 
$s^\tau_k=s^\tau_i\Leftrightarrow k\in J^\complement$ 
are satisfied, and the two subsets $Z_{\tau,J},Z_{\tau,J^\complement}$ where only 
the first (resp.\ the second) condition is satisfied. 
In case $Q=Q_n$ assume $\sum_{j\in J}\theta_j>1$, then 
$\sum_{i\in J^\complement}\theta'_i>1$ and $Z_{\tau,J}=Z_{\tau}\cap R^{\theta'}(Q)$, 
$Z_{\tau,J^\complement}=Z_{\tau}\cap R^\theta(Q)$.
The image of $Z_{\tau,J}$ forms a projective space 
$\P^{|J^\complement|-2}_S\subset\mathcal M^{\theta'}(Q)$, the image of 
$Z_{\tau,J^\complement}$ forms a projective space 
$\P^{|J|-2}_S\subset\mathcal M^\theta(Q)$ and the image of $Z_\tau$ forms 
a subscheme $Z^\tau\subset\mathcal M^\tau(Q)$ isomorphic to the base scheme $S$. 
The morphisms
$\mathcal M^\theta(Q)\rightarrow\mathcal M^\tau(Q)\leftarrow\mathcal M^{\theta'}(Q)$
restrict to morphisms $\P^{|J|-2}_S\rightarrow Z^\tau\leftarrow\P^{|J^\complement|-2}_S$
and are isomorphisms elsewhere.
The case $Q=P_n$ is similar with projective spaces
$\P^{|J|-1}_S\subset\mathcal M^\theta(Q)$, 
$\P^{|J^\complement|-1}_S\subset\mathcal M^{\theta'}(Q)$. 

\medskip

Coordinate functions on $R^*(Q)$ are given by the tautological family on $R^*(Q)$, 
considered as sections 
$s=(s_1\!=\!(s_{1,0}\!:\!s_{1,1}),\ldots,s_n\!=\!(s_{n,0}\!:\!s_{n,1}))$ 
of $\P^1_{R^*(Q)}\to R^*(Q)$, as follows. 
In case $Q=Q_n$ for $i,j\in\{1,\ldots,n\}$ over the invariant open subscheme 
$\{y\:|\:s_i(y)\neq s_j(y)\}\subset R^*(Q)$ there is the section
$\left(\begin{smallmatrix} s_{i,1}&-s_{i,0}\\ -s_{j,1}&s_{j,0}\end{smallmatrix}\right)$
in $\PGL(2)$ that transforms the tautological family $s$ to the family $\tilde{s}$ 
with $\tilde{s}_i=(0:1),\tilde{s}_j=(1:0)$.
In case $Q=P_n$ we have the additional sections $s_i=s_0=(0:1)$, $s_j=s_\infty=(1:0)$ 
and we set $\tilde{s}=s$.
The invariant open subscheme 
\[
U_\tau=\big\{\,y\:\big|\:s_i(y)\neq s_j(y),\:
\forall\,k\!\in\!J^\complement\colon s_k(y)\neq s_j(y),\:
\forall\,l\!\in\!J\colon s_l(y)\neq s_i(y)\big\}\subset R^*(Q)
\]
contains $Z_\tau$. The algebra of invariant functions on $U_\tau$ corresponds to 
the subalgebra of torus invariant functions in the $\O_S$-algebra generated by 
$\frac{\tilde{s}_{l,1}}{\tilde{s}_{l,0}}$ for $l\in J$ and
$\frac{\tilde{s}_{k,0}}{\tilde{s}_{k,1}}$ for $k\in J^\complement$. 
This $\O_S$-algebra of torus invariants is generated by 
$f^{i,j}_{k,l}=\frac{\tilde{s}_{k,0}\tilde{s}_{l,1}}{\tilde{s}_{k,1}\tilde{s}_{l,0}}$
for $l\in J$, $k\in J^\complement$.
The function $f^{i,j}_{k,l}$ can be written in terms of $s$ as
\[
f^{i,j}_{k,l}=\frac{(s_{j,0}s_{l,1}-s_{j,1}s_{l,0})(s_{i,1}s_{k,0}-s_{i,0}s_{k,1})}
{(s_{i,1}s_{l,0}-s_{i,0}s_{l,1})(s_{j,0}s_{k,1}-s_{j,1}s_{k,0})}.
\]
These invariant functions are sometimes called the cross-ratios of four sections.
Since invariant regular functions on the representation space correspond to regular 
functions on the moduli spaces we obtain the following result.

\begin{lemma}\label{le:fijlk}
{\rm(a)} The invariant regular functions $f^{i,j}_{k,l}$ on $U_\tau$ define 
regular functions on the image of $U_\tau\cap R^\tau(Q)$ (resp.\ $U_\tau\cap R^\theta(Q)$,
$U_\tau\cap R^{\theta'}(Q)$) in  $\mathcal M^\tau(Q)$ (resp.\ $\mathcal M^\theta(Q)$, 
$\mathcal M^{\theta'}(Q)$) which we also denote $f^{i,j}_{k,l}$.
These satisfy $\phi_{\theta,\tau}^*(f^{i,j}_{k,l})=f^{i,j}_{k,l}$,
$\phi_{\theta',\tau}^*(f^{i,j}_{k,l})=f^{i,j}_{k,l}$.\\
{\rm(b)} For $z\in Z^\tau$ the local ring $\O_{\mathcal M^\tau(Q),z}$ is the 
localisation of an $\O_S$-algebra generated by the functions $f^{i,j}_{k,l}$ for 
$l\in J$, $k\in J^\complement$.
\end{lemma}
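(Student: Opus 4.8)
The plan is to establish both statements directly from the description of the invariant functions $f^{i,j}_{k,l}$ together with the GIT results of subsection \ref{subsec:weightspace-limitfunctor}. The key point to keep in mind is that by statement (5) after theorem \ref{thm:adequatemodulispace} (universality of $q$ for morphisms to schemes, equivalently $R^\theta(Q,d)\to\mathcal M^\theta(Q,d)$ being a categorical quotient), invariant regular functions on an invariant open subscheme $W\subseteq R^\theta(Q)$ descend to regular functions on the image of $W$ in $\mathcal M^\theta(Q)$, and conversely regular functions on the moduli space pull back to invariant functions. So the whole proof is a translation between invariant functions upstairs and functions downstairs.

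For part (a): first I would check that $U_\tau$, $U_\tau\cap R^\theta(Q)$, $U_\tau\cap R^{\theta'}(Q)$ are invariant open subschemes of $R^\tau(Q)$, $R^\theta(Q)$, $R^{\theta'}(Q)$ respectively --- this is immediate since the defining conditions $s_i(y)\neq s_j(y)$ etc.\ are invariant under the group action (the cross-ratio expression for $f^{i,j}_{k,l}$ exhibited before the lemma makes the invariance of the $f^{i,j}_{k,l}$ manifest, and one checks the denominator is a unit on $U_\tau$). Their images in the moduli spaces are open by statement (2) (the quotient morphisms are universally submersive, hence submersive, so images of invariant opens are open). Then $f^{i,j}_{k,l}$, being an invariant regular function on $U_\tau$, defines a regular function on its image by the categorical-quotient property, and similarly on the images of $U_\tau\cap R^\theta(Q)$ and $U_\tau\cap R^{\theta'}(Q)$. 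The compatibility $\phi_{\theta,\tau}^*(f^{i,j}_{k,l})=f^{i,j}_{k,l}$ then follows because the morphism $\phi_{\theta,\tau}\colon\mathcal M^\theta(Q)\to\mathcal M^\tau(Q)$ is induced by the open inclusion $R^\theta(Q)\subseteq R^\tau(Q)$, so on the level of invariant functions it is just restriction; since the \emph{same} invariant function $f^{i,j}_{k,l}$ on $R(Q)$ (or rather on the relevant invariant open) induces the functions on both moduli spaces, the pullback formula is a formal consequence of the commuting square relating the two quotient morphisms. The same argument applies to $\phi_{\theta',\tau}$.

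For part (b): the content is that near a point $z\in Z^\tau$ the local ring of $\mathcal M^\tau(Q)$ is generated, after localisation, by the $f^{i,j}_{k,l}$ with $l\in J$, $k\in J^\complement$. The strategy is to show that $U_\tau$ is a $\overline{G}$-invariant affine open neighbourhood (in $R^\tau(Q)$) of the orbits comprising $Z_\tau$, whose image in $\mathcal M^\tau(Q)$ is an affine open neighbourhood $\Spec B$ of $Z^\tau$ with $B = \Gamma(U_\tau,\O)^{\overline{G}}$; then it suffices to identify this ring of invariants. Using the trivialisation by the $\PGL(2)$-section $\left(\begin{smallmatrix} s_{i,1}&-s_{i,0}\\ -s_{j,1}&s_{j,0}\end{smallmatrix}\right)$ (for $Q=Q_n$) or directly (for $Q=P_n$), the torus-invariant functions in the $\O_S$-algebra generated by the $\frac{\tilde s_{l,1}}{\tilde s_{l,0}}$ ($l\in J$) and $\frac{\tilde s_{k,0}}{\tilde s_{k,1}}$ ($k\in J^\complement$) are exactly those whose exponent vector lies in the kernel of the torus-weight map; a direct computation with Laurent monomials shows this sublattice is generated by the differences realised by the $f^{i,j}_{k,l}=\frac{\tilde s_{k,0}\tilde s_{l,1}}{\tilde s_{k,1}\tilde s_{l,0}}$. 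Hence $B$ is generated as an $\O_S$-algebra by these $f^{i,j}_{k,l}$, and localising at $z$ gives the claim.

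The main obstacle I anticipate is part (b), specifically verifying that $U_\tau$ really maps onto an affine open chart of $\mathcal M^\tau(Q)$ containing all of $Z^\tau$ and that the ring of $\overline{G}$-invariants on $U_\tau$ is precisely the torus-invariant subalgebra described (rather than something larger coming from the remaining geometry of $U_\tau$ outside the cross-ratio coordinates). Handling this cleanly requires checking that on $U_\tau$ the action, after slicing by the $\PGL(2)$- or $\G_m\times\G_m$-section, reduces to a torus action on an affine space with coordinates the $\frac{\tilde s_{l,1}}{\tilde s_{l,0}}$ and $\frac{\tilde s_{k,0}}{\tilde s_{k,1}}$ --- i.e.\ that $U_\tau$ is precisely the locus where these ratios are defined and finite --- and then the combinatorial identification of the invariant monomial subalgebra with the algebra generated by the $f^{i,j}_{k,l}$. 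The compatibility statements in part (a) are by contrast essentially formal once the setup is in place.
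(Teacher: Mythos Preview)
Your proposal is correct and follows essentially the same approach as the paper. In fact the paper does not give a separate proof of this lemma at all: the sentence immediately preceding it (``Since invariant regular functions on the representation space correspond to regular functions on the moduli spaces we obtain the following result'') together with the preceding paragraph --- which already identifies the algebra of invariant functions on $U_\tau$ with the torus-invariant subalgebra generated by the $f^{i,j}_{k,l}$ for $l\in J$, $k\in J^\complement$ --- is the entire argument, and your write-up simply fleshes out these steps.
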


We express the conditions in \rm(\ref{eq:functorinvlim-QnPn}) for a family 
$(V^\theta)_\theta$ locally in terms of equations.

\begin{lemma}\label{le:eq-invlimit}
Let $V^\theta,V^{\theta'}$ be free representations of $Q=Q_n$ or $Q=P_n$ over 
an $S$-scheme $Y$ stable with respect to generic $\theta,\theta'$. 
Assume $V^\theta,V^{\theta'}$ are part of a collection as in 
{\rm(\ref{eq:functorinvlim-QnPn})}. 
We consider $V^\theta,V^{\theta'}$ as tuples of sections 
$(s^\theta_i)_i=(s^\theta_{i,0}:s^\theta_{i,1})_i$, 
$(s^{\theta'}_i)_i=(s^{\theta'}_{i,0}:s^{\theta'}_{i,1})_i$ of $\P^1_Y$, 
in case $Q=P_n$ we add the sections $s^\theta_0,s^{\theta'}_0=(0:1)$,
$s^\theta_\infty,s^{\theta'}_\infty=(1:0)$.
In case $Q=Q_n$ let $i,j\in\{1,\ldots,n\}$, in case $Q=P_n$ let $i=0,j=\infty$.
Let $U_{i,j}=\{y\:|\:s^\theta_i(y)\neq s^\theta_j(y),
s^{\theta'}_i(y)\neq s^{\theta'}_j(y)\}\subseteq Y$.
We choose homogeneous coordinates $x^\theta_0,x^\theta_1$ and 
$x^{\theta'}_0,x^{\theta'}_1$ of $\P^1_{U_{i,j}}$ 
such that $s^\theta_i=(0:1),s^\theta_j=(1:0)$ with respect to $x^\theta_0,x^\theta_1$
and $s^{\theta'}_i=(0:1), s^{\theta'}_j=(1:0)$ with respect to $x^{\theta'}_0,x^{\theta'}_1$. 
Then $V^\theta$ and $V^{\theta'}$ are over $U_{i,j}$ related by the equations
\begin{equation}\label{eq:theta-theta'}
s^\theta_{k,0}s^\theta_{l,1}s^{\theta'}_{k,1}s^{\theta'}_{l,0}
=s^\theta_{k,1}s^\theta_{l,0}s^{\theta'}_{k,0}s^{\theta'}_{l,1}
\end{equation}
for all $k,l\in\{1,\ldots,n\}$.
\end{lemma}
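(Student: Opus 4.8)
The plan is to reduce the statement to the content of Lemma~\ref{le:fijlk}, which translates the compatibility condition in (\ref{eq:functorinvlim-QnPn}) into equalities of the cross-ratio functions $f^{i,j}_{k,l}$, and then to clear denominators. First I would fix a geometric point $y\in U_{i,j}$ and observe that it suffices to prove the equations (\ref{eq:theta-theta'}) locally around each such $y$, since $U_{i,j}$ is covered by opens on which $s^\theta$ and $s^{\theta'}$ admit the normalising coordinate changes described in the statement; on such an open the sections $s^\theta$ and $s^{\theta'}$ have the property that $s^\theta_i,s^\theta_j$ are disjoint and likewise $s^{\theta'}_i,s^{\theta'}_j$. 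The key point is that in these coordinates the cross-ratio $f^{i,j}_{k,l}$ computed from $s^\theta$ (which only involves $s^\theta_i,s^\theta_j,s^\theta_k,s^\theta_l$) equals $\frac{s^\theta_{k,0}s^\theta_{l,1}}{s^\theta_{k,1}s^\theta_{l,0}}$, using that $s^\theta_i=(0:1)$ and $s^\theta_j=(1:0)$ so the factors involving $i,j$ degenerate; similarly for $s^{\theta'}$.

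Next I would invoke the compatibility relation. For $k,l\in\{1,\dots,n\}$ with, say, $k,l$ lying on opposite sides of a wall $W$ separating the chambers of $\theta$ and $\theta'$, Lemma~\ref{le:fijlk}(a) gives $\phi_{\theta,\tau}^*(f^{i,j}_{k,l})=f^{i,j}_{k,l}=\phi_{\theta',\tau}^*(f^{i,j}_{k,l})$, and the hypothesis $\phi_{\theta,\tau}\circ\phi_{s^\theta}=\phi_{\theta',\tau}\circ\phi_{s^{\theta'}}$ on $U(s^\theta,s^{\theta'})$ forces the pullbacks of $f^{i,j}_{k,l}$ along $\phi_{s^\theta}$ and $\phi_{s^{\theta'}}$ to agree on the relevant open subscheme of $Y$. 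Pulling back, this says precisely
\[
\frac{s^\theta_{k,0}s^\theta_{l,1}}{s^\theta_{k,1}s^\theta_{l,0}}
=\frac{s^{\theta'}_{k,0}s^{\theta'}_{l,1}}{s^{\theta'}_{k,1}s^{\theta'}_{l,0}},
\]
which upon cross-multiplication is exactly (\ref{eq:theta-theta'}). The cases where $k,l$ lie on the same side of the wall, or where one of them equals a normalising index, are handled the same way or are trivial (both sides coincide by the local isomorphism $s^\theta|_{U(s^\theta,s^{\theta'})}\cong s^{\theta'}|_{U(s^\theta,s^{\theta'})}$ noted before the lemma), and every pair $k,l$ is separated by some wall connecting the chambers of $\theta$ and $\theta'$, since all chambers are connected through codimension-one GIT classes.

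The main obstacle I anticipate is bookkeeping rather than conceptual: one must check that the equation (\ref{eq:theta-theta'}) — which is a homogeneous relation in the coordinates and hence independent of the chosen lifts of the sections to $\A^2$ — is really the denominator-cleared form of the cross-ratio identity, and that it is well-defined on all of $U_{i,j}$ and not just on the open where the denominators $s^\theta_{k,1}s^\theta_{l,0}$ etc.\ are invertible. For this I would argue that the two sides of (\ref{eq:theta-theta'}) are sections of the same line bundle on $U_{i,j}$ agreeing on a dense open subscheme where the $s$'s are suitably disjoint, and $U_{i,j}$ is reduced (it is an open subscheme of $Y$, and we may assume $Y$ reduced, or else argue by flatness), hence they agree everywhere. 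A secondary point is to confirm that for every pair $k,l$ one can find a chain of chambers from $\theta$ to $\theta'$ along which the relation propagates; this follows from the wall-and-chamber description in remarks~\ref{rem:Qn-weightspace} and~\ref{rem:Pn-weightspace} together with the rewriting (\ref{eq:functorinvlim-QnPn}).
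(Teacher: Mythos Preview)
Your sketch correctly identifies the core mechanism --- equating the cross-ratios $f^{i,j}_{k,l}$ via Lemma~\ref{le:fijlk} and the compatibility condition --- and this is exactly what the paper does in the case $k\in J^\complement$, $l\in J$. But the way you propose to pass from this case to the general statement has a genuine gap.

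The density argument does not go through. You want to say that both sides of (\ref{eq:theta-theta'}) agree on the open locus where the relevant denominators $s^\theta_{k,1}s^\theta_{l,0}$, $s^{\theta'}_{k,1}s^{\theta'}_{l,0}$ are invertible, and then extend by continuity. But $Y$ is an \emph{arbitrary} $S$-scheme: it need not be reduced, and there is no flatness hypothesis to invoke. Worse, that open locus need not even be dense: if an entire component of $Y$ maps into the closed stratum of $\mathcal M^\tau(Q)$ corresponding to the wall (i.e.\ $U(s^\theta,s^{\theta'})$ misses that component), the locus where the cross-ratio is defined as a function may be empty there. So ``agree on a dense open, hence everywhere'' is not available, and the paper does not use it.

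Relatedly, the case $k,l$ on the same side of the wall is not ``trivial'' and is not covered by the isomorphism $s^\theta|_{U(s^\theta,s^{\theta'})}\cong s^{\theta'}|_{U(s^\theta,s^{\theta'})}$, because the point $y$ you are working around lies precisely outside $U(s^\theta,s^{\theta'})$. When $k,l\in J$, both $s^\theta_{k,1}(y)$ and $s^\theta_{l,1}(y)$ may vanish, so $f^{i,j}_{k,l}$ is not a regular function near $y$ and the cross-ratio identity is not directly available. The paper handles this by introducing auxiliary indices $g\in J^\complement$, $h\in J$ with $s^\theta_g(y)\neq(0{:}1)$, $s^{\theta'}_h(y)\neq(1{:}0)$, and multiplying by $\frac{s^\theta_{g,0}s^{\theta'}_{h,0}}{s^\theta_{g,1}s^{\theta'}_{h,1}}$ to rewrite (\ref{eq:theta-theta'}) as a product of cross-ratios that \emph{are} regular at $y$ and to which the first case applies. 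This algebraic trick is the missing idea.

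Finally, the passage from ``single wall'' to ``arbitrary generic $\theta,\theta'$'' is more than bookkeeping: one must show that if the equations (\ref{eq:theta-theta'}) hold for $(s^\theta,s^{\theta'})$ and for $(s^{\theta'},s^{\theta''})$, then they hold for $(s^\theta,s^{\theta''})$. The paper carries out this transitivity directly with another case analysis (cases $k\in J',l\in J^\complement$ and $k\in J',l\in J$), again inserting auxiliary indices to make all factors invertible near $y$. Your chain-of-chambers remark points in the right direction, but the actual propagation of (\ref{eq:theta-theta'}) along the chain needs this explicit algebraic verification, not an appeal to density.
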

\begin{proof}
Over $U(V^\theta,V^{\theta'})$ the equations hold because
$V^\theta|_{U(V^\theta,V^{\theta'})}\cong V^{\theta'}|_{U(V^\theta,V^{\theta'})}$.

Consider the local situation around a point 
$y\in U_{i,j}\setminus U(V^\theta,V^{\theta'})$. 
Assume first that the polytopes $\Theta(V^\theta(y))$ and $\Theta(V^{\theta'}(y))$ 
meet in an inner wall $W$. 
In case $Q=P_n$ let $W=W_J$ and we can assume that 
$s^\theta_h(y)=s^\theta_j(y)=s^\theta_\infty(y)=(1:0)$ for $h\in J$ and 
$s^{\theta'}_h(y)=s^{\theta'}_i(y)=s^{\theta'}_0(y)=(0:1)$ for $h\in J^\complement$. 
In case $Q=Q_n$ let $W=W_{\{J,J^\complement\}}$ and we can assume 
$j\in J$, $i\in J^\complement$ and $s^\theta_h(y)=s^\theta_j(y)=(1:0)$ for $h\in J$, 
$s^{\theta'}_h(y)=s^{\theta'}_i(y)=(0:1)$ for $h\in J^\complement$.
For $k,l$ there are the cases: $k,l\in J$ (similar: $k,l\in J^\complement$)
or $k\in J,l\in J^\complement$ (similar: $k\in J^\complement,l\in J$).

Case $k\in J^\complement,l\in J$:
We have $s^\theta_{l,0}(y),s^{\theta'}_{l,0}(y)\neq0$ and 
$s^\theta_{k,1}(y),s^{\theta'}_{k,1}(y)\neq0$.
In a neighbourhood of $y$ equation (\ref{eq:theta-theta'}) is equivalent to the 
equation $f^{\theta,i,j}_{k,l}=f^{\theta',i,j}_{k,l}$ with 
$f^{\theta,i,j}_{k,l}=\frac{s^\theta_{k,0}s^\theta_{l,1}}{s^\theta_{k,1}s^\theta_{l,0}}$,
$f^{\theta',i,j}_{k,l}=\frac{s^{\theta'}_{k,0}s^{\theta'}_{l,1}}
{s^{\theta'}_{k,1}s^{\theta'}_{l,0}}$.
The regular functions $f^{\theta,i,j}_{k,l}$ (resp.\ $f^{\theta',i,j}_{k,l}$) 
are pullbacks of the regular functions $f^{i,j}_{k,l}$ on
$\mathcal M^\theta(Q)$ (resp.\ $\mathcal M^{\theta'}(Q)$) via 
$\phi_{V^\theta}$ (resp.\ $\phi_{V^{\theta'}}$). 
Because $\phi_{\theta,\tau}\circ\phi_{V^\theta}=\phi_{\theta',\tau}\circ\phi_{V^{\theta'}}$
and using lemma \ref{le:fijlk}.(a) it follows $f^{\theta,i,j}_{k,l}=f^{\theta',i,j}_{k,l}$.

Case $k,l\in J$:
We have $s^\theta_{k,0}(y),s^{\theta'}_{k,0}(y)\neq0$ and 
$s^\theta_{l,0}(y),s^{\theta'}_{l,0}(y)\neq0$.
In a neighbourhood of $y$ equation (\ref{eq:theta-theta'}) is equivalent to the equation
$\frac{s^\theta_{l,1}}{s^\theta_{l,0}}\frac{s^{\theta'}_{k,1}}{s^{\theta'}_{k,0}}
=\frac{s^\theta_{k,1}}{s^\theta_{k,0}}\frac{s^{\theta'}_{l,1}}{s^{\theta'}_{l,0}}$.
Multiplying with $\frac{s^\theta_{g,0}s^{\theta'}_{h,0}}{s^\theta_{g,1}s^{\theta'}_{h,1}}$
for $g\in J^\complement$ such that $s^\theta_g(y)\neq(0:1)$ and $h\in J$ such that 
$s^{\theta'}_h(y)\neq(1:0)$, we obtain the equivalent equation 
$f^{\theta,i,j}_{g,l}f^{\theta',i,j}_{h,k}=f^{\theta,i,j}_{g,k}f^{\theta',i,j}_{h,l}$.
This equation holds because by the first case we have 
$f^{\theta,i,j}_{g,l}=f^{\theta',i,j}_{g,l}$, $f^{\theta,i,j}_{g,k}=f^{\theta',i,j}_{g,k}$.

This shows that equations (\ref{eq:theta-theta'}) hold for $V^\theta,V^{\theta'}$
in a neighbourhood of $y$ if the polytopes $\Theta(V^\theta(y))$ and 
$\Theta(V^{\theta'}(y))$ have overlapping interiors or meet in an inner wall.

To show the general case we show that if the union of the sets of walls separating
$\Theta(V^\theta(y))$, $\Theta(V^{\theta'}(y))$ and 
$\Theta(V^{\theta'}(y))$, $\Theta(V^{\theta''}(y))$ are exactly the walls that separate 
$\Theta(V^\theta(y)),\Theta(V^{\theta''}(y))$ and if $V^\theta,V^{\theta'}$ and
$V^{\theta'},V^{\theta''}$ in a neighbourhood of $y$ are related by equations of 
the form (\ref{eq:theta-theta'}), then so are $V^\theta,V^{\theta''}$.
Assume that $\Theta(V^\theta(y))$ and $\Theta(V^{\theta'}(y))$ are separated by an
inner wall $W_J$ (resp.\ $W_{\{J,J^\complement\}}$) and that $\Theta(V^{\theta'}(y))$ 
and $\Theta(V^{\theta''}(y))$ are separated by an inner wall $W_{J'}$ (resp.\ 
$W_{\{J',(J')^\complement\}}$) such that $J\subset J'$, these walls are part 
of the boundary of $\Theta(V^{\theta'}(y))$ and both walls separate 
$\Theta(V^\theta(y))$ and $\Theta(V^{\theta''}(y))$. 
If $s_i^\theta(y)\neq s_j^\theta(y)$,  $s_i^{\theta''}(y)\neq s_j^{\theta''}(y)$
then  either $i\in J$, $j\in(J')^\complement$ or $j\in J$, $i\in(J')^\complement$ 
and thus $s_i^{\theta'}(y)\neq s_j^{\theta'}(y)$. 
We assume $j\in J$, $i\in(J')^\complement$.

Case $k\in J',l\in J^\complement$ (similar: $k\in (J')^\complement,l\in J$).
It is $s^{\theta'}_k(y)\neq (0:1)$, $s^{\theta'}_l(y)\neq(1:0)$ and in 
a neighbourhood of $y$ we have  
\[
s^\theta_{k,0}s^\theta_{l,1}
s^{\theta''}_{k,1}s^{\theta''}_{l,0}s^{\theta'}_{k,0}s^{\theta'}_{l,1}
=s^\theta_{k,0}s^\theta_{l,1}
s^{\theta'}_{k,1}s^{\theta'}_{l,0}s^{\theta''}_{k,0}s^{\theta''}_{l,1}
=s^{\theta'}_{k,0}s^{\theta'}_{l,1}s^\theta_{k,1}s^\theta_{l,0}
s^{\theta''}_{k,0}s^{\theta''}_{l,1},
\]
thus 
$s^\theta_{k,0}s^\theta_{l,1}s^{\theta''}_{k,1}s^{\theta''}_{l,0}
=s^\theta_{k,1}s^\theta_{l,0}s^{\theta''}_{k,0}s^{\theta''}_{l,1}$.

Case $k\in J',l\in J$ (similar: $k\in(J')^\complement, l\in J^\complement$).
Choose $h\in J^\complement\cap J'$.
Then $s^{\theta'}_k(y),s^{\theta'}_l(y)\neq (0:1)$ and
$s^\theta_h(y)=(0:1),s^{\theta''}_h(y)=(1:0)$, 
$s^{\theta'}_h(y)\neq (1:0),(0:1)$.
In a neighbourhood of $y$ we have
\[
\begin{array}{l}
s^\theta_{k,0}s^\theta_{l,1}s^{\theta''}_{k,1}s^{\theta''}_{l,0}
s^{\theta'}_{k,0}s^{\theta'}_{l,0}
s^{\theta'}_{h,1}s^\theta_{h,1}s^{\theta''}_{h,0}
=s^\theta_{k,0}s^\theta_{l,1}s^{\theta'}_{k,1}s^{\theta''}_{l,0}
s^{\theta''}_{k,0}s^{\theta'}_{l,0}
s^{\theta''}_{h,1}s^\theta_{h,1}s^{\theta'}_{h,0}\\
=s^{\theta'}_{k,0}s^\theta_{l,1}s^\theta_{k,1}s^{\theta''}_{l,0}
s^{\theta''}_{k,0}s^{\theta'}_{l,0}
s^{\theta''}_{h,1}s^{\theta'}_{h,1}s^\theta_{h,0}
=s^{\theta'}_{k,0}s^{\theta'}_{l,1}s^\theta_{k,1}s^{\theta''}_{l,0}
s^{\theta''}_{k,0}s^\theta_{l,0}
s^{\theta''}_{h,1}s^\theta_{h,1}s^{\theta'}_{h,0}\\
=s^{\theta'}_{k,0}s^{\theta''}_{l,1}s^\theta_{k,1}s^{\theta'}_{l,0}
s^{\theta''}_{k,0}s^\theta_{l,0}
s^{\theta'}_{h,1}s^\theta_{h,1}s^{\theta''}_{h,0}
\end{array}
\]
thus 
$s^\theta_{k,0}s^\theta_{l,1}s^{\theta''}_{k,1}s^{\theta''}_{l,0}
=s^\theta_{k,1}s^\theta_{l,0}s^{\theta''}_{k,0}s^{\theta''}_{l,1}$.
\end{proof}

\begin{prop}\label{prop:functorinvlimQnPn}
For $Q=Q_n$ or $Q=P_n$ the functor of the inverse limit of moduli spaces
of representations is isomorphic to the functor $\Lim(Q)$ defined by
\begin{equation}\label{eq:functorinvlimQnPn}
Y\;\mapsto\;
\Bigg\{
(\phi_{V^\theta})_\theta\,\in\!\!\!\prod_{\textit{$\theta$ generic}}\!\!\!
\mathcal M^\theta(Q)(Y)
\;\;\Bigg|\;
\textit{locally equations {\rm(\ref{eq:theta-theta'})} hold for $(V^\theta)_\theta$}\;
\Bigg\}
\end{equation}
where the product is over representatives $\theta$ of the generic GIT equivalence classes
and $\phi_{V^\theta}\colon Y\to\mathcal M^\theta(Q)$ is the morphism determined by a
$\theta$-stable representation $V^\theta$ over $Y$.
\end{prop}
\begin{proof}
By lemma \ref{le:eq-invlimit} for a family $(V^\theta)_\theta$ 
satisfying the conditions of (\ref{eq:functorinvlim-QnPn}) the equations 
(\ref{eq:theta-theta'}) hold locally. We show the opposite implication.

Let $(V^\theta)_\theta$ be a family of representations over an $S$-scheme $Y$
as in (\ref{eq:functorinvlimQnPn}). Let $y\in Y$. 
Let $C_\tau$ be an equivalence class of codimension $1$ in the interior  
and $\theta,\theta'$ generic such that
$\overline{C}_\tau=\overline{C}_\theta\cap\overline{C}_{\theta'}$.
We can consider $V^\theta,V^{\theta'}$ in a neighbourhood $U$ of $y$ as tuples of 
sections $(s^\theta_i)_i,(s^{\theta'}_i)_i$ of $\P^1_U\to U$.
In case $Q=Q_n$ let $i,j\in\{1,\ldots,n\}$ such that $s^\theta_i(y)\neq s^\theta_j(y)$ 
and $s^{\theta'}_i(y)\neq s^{\theta'}_j(y)$ and choose coordinates such that 
$s^\theta_i,s^{\theta'}_i=(0:1)$ and $s^\theta_j,s^{\theta'}_j=(1:0)$. 
In case $Q=P_n$ let $i=0,j=\infty$, then $s^\theta_i=s^\theta_0=(0:1),
s^\theta_j=s^\theta_\infty=(1:0)$ and the same for $\theta'$.

If there exists $k\in\{1,\ldots,n\}$ such that $s^\theta_k(y)\neq(0:1),(1:0)$,
$s^{\theta'}_k(y)\neq(0:1),(1:0)$ then the equations 
$s^\theta_{k,0}s^\theta_{l,1}s^{\theta'}_{k,1}s^{\theta'}_{l,0}
=s^\theta_{k,1}s^\theta_{l,0}s^{\theta'}_{k,0}s^{\theta'}_{l,1}$
show that in a neighbourhood of $y$ $V^\theta\cong V^{\theta'}$ and thus
$\phi_{\theta,\tau}\circ\phi_{V^\theta}=\phi_{\theta',\tau}\circ\phi_{V^{\theta'}}$.

Otherwise there is $J\subset\{1,\ldots,n\}$, where we can assume that $j\in J$,
$i\in J^\complement$, such that 
$\forall\,k\in J^\complement\colon s^\theta_k(y)=(0:1),s^{\theta'}_k(y)\neq(1:0)$ 
and $\forall\,l\in J\colon s^\theta_l(y)\neq(0:1),s^{\theta'}_l(y)=(1:0)$.
Let $x\in S$ be the image of $y\in Y$ and $z\in Z^\tau$ the point over $x$, then 
$\phi_{\theta,\tau}\circ\phi_{V^\theta}(y)=z=\phi_{\theta',\tau}
\circ\phi_{V^{\theta'}}(y)$.
As equations (\ref{eq:theta-theta'}) hold, the functions
$f^{\theta,i,j}_{k,l}=\frac{s^\theta_{k,0}s^\theta_{l,1}}{s^\theta_{k,1}s^\theta_{l,0}}$ 
and $f^{\theta'\!,i,j}_{k,l}=\frac{s^{\theta'}_{k,0}s^{\theta'}_{l,1}}
{s^{\theta'}_{k,1}s^{\theta'}_{l,0}}$ coincide. We may consider these functions 
as elements of the local ring $\O_{Y,y}$.
Using lemma \ref{le:fijlk}, these are the pullbacks of the functions $f^{i,j}_{k,l}$ 
in $\O_{\mathcal M^\tau(Q),z}$ under $\phi_{\theta,\tau}\circ\phi_{V^\theta}$ and
$\phi_{\theta',\tau}\circ\phi_{V^{\theta'}}$, and it follows that the homomorphisms 
of local rings $\O_{\mathcal M^\tau(Q),z}\to\O_{Y,y}$ agree.

Since $\phi_{\theta,\tau}\circ\phi_{V^\theta}$ and 
$\phi_{\theta',\tau}\circ\phi_{V^{\theta'}}$ coincide as maps of sets of points and 
in each point the induced homomorphisms of local rings coincide, it follows
$\phi_{\theta,\tau}\circ\phi_{V^\theta}=\phi_{\theta',\tau}\circ\phi_{V^{\theta'}}$.
\end{proof}

Let $(V^\theta)_\theta$ be a family of representations over $Y$ satisfying the 
conditions of (\ref{eq:functorinvlim-QnPn}) or equivalently (\ref{eq:functorinvlimQnPn}). 
For $V^\theta, V^{\theta'}$ we have the natural structure of a closed subscheme 
$Z(V^\theta,V^{\theta'})\subseteq Y$ supported on the closed subset 
$Y\setminus U(V^\theta,V^{\theta'})$. 
Under the assumptions and notations of lemma \ref{le:eq-invlimit}
let $y\in U_{i,j}$ and assume $k,l\in\{1,\ldots,n\}$ such that
$s^\theta_k\neq(0:1),(1:0)$, $s^{\theta'}_k\neq(0:1)$
and $s^{\theta'}_l\neq(0:1),(1:0)$, $s^\theta_l\neq(1:0)$.
Because equations (\ref{eq:theta-theta'}) hold in a neighbourhood of $y$, the 
scheme defined by the equation $s^{\theta'}_k=(1:0)$ coincides with the scheme 
defined by the equation $s^\theta_l=(0:1)$ and does not depend on the choice 
of $k,l$.
The scheme $Z(V^\theta,V^{\theta'})$ can be defined in a neighbourhood of $y$
by each of these equations.

\medskip

The following proposition gives a geometric interpretation of the
condition in (\ref{eq:functorinvlimQnPn}) that two representations 
are related by equations (\ref{eq:theta-theta'}), 
and indicates the relation between inverse limits 
of moduli spaces of representations for the quivers $P_n,Q_n$ and
moduli spaces of chains and trees of $\P^1$'s with marked points.

\begin{prop}\label{prop:curveP1P1}
Let $V^\theta, V^{\theta'}$ be free representations of $Q=Q_n$ or $Q=P_n$ over 
an $S$-scheme $Y$ stable with respect to generic $\theta,\theta'$. 
We use the notations of lemma \ref{le:eq-invlimit} and assume that 
$V^\theta,V^{\theta'}$ over $U_{i,j}$ are related by equations {\rm(\ref{eq:theta-theta'})}.
Consider the equations
\begin{equation}\label{eq:curveP1P1}
s^\theta_{k,0}s^{\theta'}_{k,1}x^\theta_1x^{\theta'}_0
=s^\theta_{k,1}s^{\theta'}_{k,0}x^\theta_0x^{\theta'}_1
\end{equation}
over $U_{i,j}$.\\
{\rm(a)} Let $C_{i,j}\subset(\P^1)^2_{U_{i,j}}$ be the closed subscheme defined
by the equations {\rm(\ref{eq:curveP1P1})} for all $k$. 
For given $k$ let 
\[
U_{i,j,k}\;=\;\big\{\,y\in U_{i,j}\,\:\big|\:
\textit{$s^\theta_i(y),s^\theta_j(y),s^\theta_k(y)$ distinct \ or \  
$s^{\theta'}_i(y),s^{\theta'}_j(y),s^{\theta'}_k(y)$ distinct}\,\big\}.
\]
Then over $U_{i,j,k}$ the subscheme $C_{i,j,k}=C_{i,j}\times_{U_{i,j}}U_{i,j,k}
\subset(\P^1)^2_{U_{i,j,k}}$ is given by the single equation 
{\rm(\ref{eq:curveP1P1})} for this $k$.\\
{\rm(b)} The curves $C_{i,j}\subset(\P^1)^2_{U_{i,j}}$ glue (with the appropriate
coordinate changes) to a reduced curve $C\subset(\P^1)^2_Y$ flat over $Y$, 
which contains all pairs of sections $(s^\theta_l,s^{\theta'}_l)$, is isomorphic 
to $\,\P^1_{U(V^\theta,V^{\theta'})}$ over $U(V^\theta,V^{\theta'})\subseteq Y$ 
via its two projections, and degenerates exactly over the subscheme 
$Z(V^\theta,V^{\theta'})\subseteq Y$ to a chain of two $\,\P^1$'s intersecting 
transversally such that each projection defines an isomorphism on one component 
and contracts the other to a reduced point.\\
{\rm(c)} The curve $C$ induces morphisms $\,\P^1_Y\leftrightarrows\,\P^1_Y$ 
which degenerate to $\,\P^1\to\textit{pt.}$ over $Z(V^\theta,V^{\theta'})$ 
and which restrict to mutually inverse isomorphisms 
$\,\P^1_{U(V^\theta,V^{\theta'})}\!\leftrightarrow\P^1_{U(V^\theta,V^{\theta'})}$ 
that give rise to the isomorphism 
$V^\theta|_{U(V^\theta,V^{\theta'})}\cong V^{\theta'}|_{U(V^\theta,V^{\theta'})}$.
\end{prop}
\begin{proof}
(a) Assume that $s^\theta_k(y)\neq(0:1),(1:0)$. We can further assume 
$s^{\theta'}_k(y)\neq(1:0)$ (similar: $s^{\theta'}_k(y)\neq(0:1)$). 
Then in a neighbourhood of $y$ for all $l$ we have
\[\textstyle
s^\theta_{l,0}s^{\theta'}_{l,1}x^\theta_1x^{\theta'}_0
=s^\theta_{l,0}s^{\theta'}_{l,1}
\frac{s^\theta_{k,1}s^{\theta'}_{k,0}}{s^\theta_{k,0}s^{\theta'}_{k,1}}
x^\theta_0x^{\theta'}_1
=s^\theta_{l,1}s^{\theta'}_{l,0}x^\theta_0x^{\theta'}_1
\]
using equation (\ref{eq:curveP1P1}) for $k$ and equation (\ref{eq:theta-theta'}) 
for $k,l$.\\
(b) To show that the curves $C_{i,j}$ glue, it suffices to show that
the curves $C_{i,j}$ and $C_{i,j'}$ (and similar: $C_{i,j}$ and $C_{i',j}$)
coincide over $U_{i,j}\cap U_{i.j'}$ after the base changes\\[2mm]
\centerline{
$\left(\begin{smallmatrix}s^{\theta,i,j'}_{k,0}\\s^{\theta,i,j'}_{k,1}
\end{smallmatrix}\right)
=\left(\begin{smallmatrix}s^{\theta,i,j'}_{j,0}&\;0\\s^{\theta,i,j'}_{j,1}&\;a
\end{smallmatrix}\right)
\left(\begin{smallmatrix}s^{\theta,i,j}_{k,0}\\s^{\theta,i,j}_{k,1}
\end{smallmatrix}\right)\quad$
and
$\quad\left(\begin{smallmatrix}s^{\theta,i,j}_{k,0}\\s^{\theta,i,j}_{k,1}
\end{smallmatrix}\right)
=\left(\begin{smallmatrix}a&0\\[1mm]-s^{\theta,i,j'}_{j,1}&s^{\theta,i,j'}_{j,0}
\end{smallmatrix}\right)
\left(\begin{smallmatrix}s^{\theta,i,j'}_{k,0}\\s^{\theta,i,j'}_{k,1}
\end{smallmatrix}\right)$,
}\\[2mm]
where $a$ satisfies the relation 
$s^{\theta,i,j}_{j',0}s^{\theta,i,j'}_{j,1}+as^{\theta,i,j}_{j',1}=0$
and $s^\theta_k=(s^{\theta,i,j}_{k,0}:s^{\theta,i,j}_{k,1})$
with respect to coordinates $x_0^{\theta,i,j},x_1^{\theta,i,j}$
such that $(s^{\theta,i,j}_{i,0}:s^{\theta,i,j}_{i,1})=(0:1)$,
$(s^{\theta,i,j}_{j,0}:s^{\theta,i,j}_{j,1})=(1:0)$ (similar for $\theta'$, $j'$).
Applying these base changes one verifies that the equations 
(\ref{eq:curveP1P1}) hold for all $k$ with respect to $i,j$ 
if and only if they hold for all $k$ with respect to $i,j'$. 
The properties of $C$ follow from the properties of $C_{i,j,k}$ 
given by the single equation {\rm(\ref{eq:curveP1P1})} for $k$, 
which are easy to verify.\\
(c) follows from (b).
\end{proof}

\begin{rem}\label{rem:restr-eq-theta-theta'}
Proposition \ref{prop:curveP1P1} allows to restrict the sets of equations 
required to hold in functors like (\ref{eq:functorinvlimQnPn}) in 
proposition \ref{prop:functorinvlimQnPn}.
All equations (\ref{eq:theta-theta'}) relating $V^\theta,V^{\theta'}$ hold 
if the equations with respect to choices of $i,j$ hold such that the corresponding sets 
$U_{i,j}$ cover $Y$.
Also, with respect to fixed $i,j$, all equations relating $V^\theta,V^{\theta'}$ 
hold over $U_{i,j}$ if those for certain $k$ hold over $U_{i,j,k}$
and these sets cover $U_{i,j}$.
\end{rem}

\section{Losev-Manin and Grothendieck-Knudsen moduli spaces and root systems of type $A$}
\label{sec:Ln-M0n}

\subsection{Losev-Manin and Grothendieck-Knudsen moduli spaces}
\label{subsec:Ln-M0n}

The Losev-Manin moduli spaces of stable $n$-pointed chains of $\P^1$'s 
were introduced in \cite{LM00}.

\begin{defi}
A {\it stable $n$-pointed chain of $\,\P^1\!$} over an algebraically closed 
field is a tuple $(C,s_0,s_\infty,s_1,\ldots,s_n)$, where $(C,s_0,s_\infty)$ 
is a chain of $\,\P^1\!$, i.e.\ its irreducible components are isomorphic to 
projective lines with two distinct closed points $(\P^1,0,\infty)$ which
intersect transversally such that the point $0$ of one component 
meets the point $\infty$ of another component and the remaining points $0,\infty$
are denoted $s_0,s_\infty$, further $s_1,\ldots,s_n\in C$ are closed regular 
points different from $s_0,s_\infty$, and each component contains at least 
one of the marked points $s_1,\ldots,s_n$.
\end{defi}

\begin{defithm} {\rm(\cite{LM00}).}
Let $n\in\Z_{\geq1}$. The Losev-Manin moduli space $\overline{L}_n$ is 
the fine moduli space of stable $n$-pointed chains of $\,\P^1\!$, 
i.e.\ $\overline{L}_n$ represents the moduli functor (denoted by the same symbol)
\[ 
Y\;\mapsto\;
\Big\{\textit{isomorphism classes of stable $n$-pointed chains of $\,\P^1\!$ over $Y$}\Big\}
\]
where a stable $n$-pointed chain of $\,\P^1$ over a scheme $Y$ is a flat
proper morphism\linebreak
$C\to Y$ with sections $s_0,s_\infty,s_1,\ldots,s_n\colon Y\to C$ 
such that the geometric fibres \linebreak
$(C_y,s_0(y),s_\infty(y),s_1(y),\ldots,s_n(y))$ 
are stable $n$-pointed chains of $\,\P^1\!$ over algebraically\linebreak
closed fields.
\end{defithm}

$\overline{L}_n$ is toric, it compactifies the algebraic torus 
$L_n=(\G_m)^n/\,\G_m$, the moduli space of $n$ points in $\P^1\setminus\{0,\infty\}$.
It has been shown in \cite{LM00} that the moduli functor of 
stable $n$-pointed chains of $\P^1$ is represented by a projective scheme 
of relative dimension $n-1$, using an inductive construction of 
$\overline{L}_n$ together with the universal curve which is isomorphic to
$\overline{L}_{n+1}\to\overline{L}_n$.
This morphism $\overline{L}_{n+1}\to\overline{L}_n$, studied in a similar 
setting in \cite{Kn83}, is a special case of the following morphisms that 
arise by forgetting sets of sections, see \cite[Construction 3.15]{BB11}.

\begin{prop} 
Let $\emptyset\neq I\subseteq\{1,\ldots,n\}$. We write $\overline{L}_I$ 
for $\overline{L}_{|I|}$ with sections indexed by $I$. Then there is a morphism
\[
\gamma_I\colon\:\overline{L}_n\:\to\:\overline{L}_I
\]
such that a stable $n$-pointed chain is transformed to a stable $I$-pointed
chain by forgetting the sections $\{s_i\:|\:i\not\in I\}$ and contracting 
components which have become unstable.
\end{prop}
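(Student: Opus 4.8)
The plan is to obtain $\gamma_I$ from the universal property of $\overline{L}_I$ as a fine moduli space: it suffices to apply the ``forget and contract'' operation to the \emph{universal} stable $n$-pointed chain $\pi\colon\mathcal C\to\overline{L}_n$ with its sections $s_0,s_\infty,s_1,\dots,s_n$, producing a family of stable $I$-pointed chains over $\overline{L}_n$, and to take its classifying morphism as $\gamma_I$. Since relative $\Proj$ and the relevant direct images below commute with base change, the stable $n$-pointed chain over an arbitrary $S$-scheme $Y$ classified by a map $Y\to\overline{L}_n$ is carried by $\gamma_I$ to the contraction of the pulled-back family, so $\gamma_I$ realises the stated operation on $Y$-points. (Alternatively, writing $I$ as a nested sequence of subsets one reduces to the case $|I|=n-1$ of forgetting a single section and composes; the construction below treats all $I$ at once.)

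To contract the components that become unstable I would use the relatively semiample line bundle
\[
L\;=\;\O_{\mathcal C}\Big(\textstyle\sum_{i\in I}s_i\Big).
\]
Restricting $L$ to a component $E\cong\P^1$ of a geometric fibre $C_y$ (a chain $E_1-\dots-E_r$ of projective lines) gives $\O_E(a_E)$ with $a_E=\#\{i\in I:s_i(y)\in E\}\ge 0$; thus $\deg(L|_E)>0$ exactly on the components retaining a section indexed by $I$ and $\deg(L|_E)=0$ exactly on the components that must be contracted. Each $C_y$ has arithmetic genus $0$ and a chain dual graph, and $L_y$ has non-negative degree on every component, so $H^1(C_y,L_y^{\otimes m})=0$ for all $m\ge 0$; by cohomology and base change $\pi_*L^{\otimes m}$ is then locally free with formation commuting with base change. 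Setting $\mathcal C'=\Proj_{\overline{L}_n}\big(\bigoplus_{m\ge 0}\pi_*L^{\otimes m}\big)$ one obtains a morphism $c\colon\mathcal C\to\mathcal C'$ over $\overline{L}_n$ which (for $m\gg 0$, $L^{\otimes m}$ being relatively globally generated) is an isomorphism away from the degree-zero components and collapses each maximal connected run of degree-zero components to a single point — a node of $\mathcal C'$ when the run is flanked on both sides by surviving components, and a smooth point on the unique adjacent surviving component otherwise, so that a forgotten end section migrates onto the next component.

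It remains to check that $\mathcal C'\to\overline{L}_n$, with the pushed-forward sections $s_0'=c\circ s_0$, $s_\infty'=c\circ s_\infty$ and $s_i'=c\circ s_i$ for $i\in I$, is a family of stable $I$-pointed chains. Flatness holds because $\overline{L}_n$ is integral and regular over $\Spec\Z$ and the Hilbert polynomial of the fibres $C_y'$ with respect to $\O_{C_y'}(1)$ is constant: each has arithmetic genus $0$ and degree $\deg L_y=\sum_E a_E=|I|$. Each geometric fibre is a stable $I$-pointed chain: contracting interior runs of a chain of $\P^1$'s yields again a chain of $\P^1$'s with the two free ends marked by $s_0',s_\infty'$; a retained section $s_i$ ($i\in I$) lies on a surviving component and away from every node of $C_y$, and $c$ restricts to a closed immersion on each surviving component, so the $s_i'$ land on smooth points and remain pairwise distinct and distinct from $s_0',s_\infty'$; and every component of $C_y'$ is the image of a surviving component, hence contains some $s_i(y)$ with $i\in I$. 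Base-change compatibility of $\O_{\mathcal C}(\sum_{i\in I}s_i)$, of $\pi_*L^{\otimes m}$ and of relative $\Proj$ gives the required compatibility with pullback, hence the natural transformation and $\gamma_I$.

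The step I expect to be the main obstacle is the fibrewise geometric analysis of the contraction $c$ — verifying that the collapsed model $C_y'$ has exactly the claimed nodal structure (in particular that the point obtained from a flanked run really is a node of $\mathcal C'$, via the completed local rings) and that $c$ contracts precisely the degree-zero runs and nothing more. Once this is in place, flatness, the smoothness and distinctness of the sections, the stability of the fibres, and naturality are routine. The degenerate cases are immediate: for $|I|=1$ one has $\overline{L}_I\cong S$ and $\gamma_I$ is the structure morphism (every fibre is collapsed to the unique stable $1$-pointed chain), while for $I=\{1,\dots,n\}$ no component is contracted and $\gamma_I=\id$.
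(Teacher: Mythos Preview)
The paper does not give its own proof of this proposition; it merely states it with a reference to \cite[Construction 3.15]{BB11}. As the paper explains a few paragraphs later, the approach in \cite{BB11} is via toric geometry and root systems: one identifies $\overline{L}_n\cong X(A_{n-1})$ and then $\gamma_I$ is the toric morphism $X(A_{n-1})\to X(A_{|I|-1})$ coming from the inclusion of root subsystems $A_{|I|-1}\hookrightarrow A_{n-1}$ determined by $I$. Your approach is instead the Knudsen-style contraction via a relatively semiample line bundle, which is equally valid and is the method used in the paper (following \cite{Kn83}) for the analogous proposition about $\overline{M}_{0,n}$. Note that on a genus-zero nodal curve two line bundles of the same multidegree are isomorphic, so your $\O_{\mathcal C}(\sum_{i\in I}s_i)$ agrees fibrewise with the Hassett/Knudsen choice $\omega_{\mathcal C/\overline{L}_n}(s_0+s_\infty+\sum_{i\in I}s_i)$; either works.

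One small slip: you assert that the images $s_i'$ for $i\in I$ ``remain pairwise distinct''. In the Losev--Manin setting the light sections $s_1,\ldots,s_n$ are allowed to coincide (the definition only requires them to be regular points distinct from $s_0,s_\infty$), so they need not have been pairwise distinct to begin with, and they certainly need not be afterward. Fortunately this claim is unnecessary: stability of an $I$-pointed chain only asks that each $s_i'$ be a smooth point distinct from $s_0',s_\infty'$ and that every component carry at least one $s_i'$ with $i\in I$, all of which you do verify. With that phrase deleted, the argument is correct.
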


The Grothendieck-Knudsen moduli space $\overline{M}_{0,n}$ is the 
moduli space of stable $n$-pointed curves of genus $0$, i.e.\ of 
stable $n$-pointed trees of $\P^1$.
More generally, stable $n$-pointed curves of genus $g$ occur already 
in \cite[I.5]{SGA7(1)} and their moduli spaces and stacks 
have been systematically studied in \cite{Kn83}.

\begin{defi}
A {\it stable $n$-pointed curve of genus $0$} over an algebraically closed 
field is a tuple $(C,s_1,\ldots,s_n)$ where $C$ is a complete 
connected reduced curve $C$ of genus $0$ with at most ordinary double points, 
i.e.\ a tree of $\,\P^1\!$, $s_1,\ldots,s_n$ are closed points of $C$ 
such that each $s_i$ is a regular point of $C$, $s_i\neq s_j$ for $i\neq j$ 
and each component of $C$ has a least $3$ special points (i.e.\ singular points
and marked points $s_i$).
\end{defi}

\begin{defithm} {\rm(\cite{Kn83}).}
Let $n\in\Z_{\geq3}$. The Grothendieck-Knudsen moduli space $\overline{M}_{0,n}$ 
is the fine moduli space of stable $n$-pointed curves of genus $0$, 
i.e.\ $\overline{M}_{0,n}$ represents the moduli functor (denoted by the same symbol)
\[ 
Y\;\mapsto\;\Big\{\textit{isomorphism classes of stable $n$-pointed curves 
of genus $0$ over $Y$}\Big\}
\]
where a stable $n$-pointed curve of genus $0$ over a scheme $Y$ is a 
flat proper morphism $C\to Y$ with sections $s_1,\ldots,s_n\colon Y\to C$ 
such that the geometric fibres $(C_y,s_1(y),\ldots,s_n(y))$ are 
stable $n$-pointed curves of genus $0$. 
\end{defithm}

$\overline{M}_{0,n}$ compactifies the moduli space $M_{0,n}$ of $n$ distinct 
points in $\P^1$.
The fact that the moduli functor of stable $n$-pointed curves of genus $0$
is represented by a projective scheme of relative dimension $n-3$ has been shown
in \cite{Kn83} using an inductive argument on $n$, showing that the 
universal family over $\overline{M}_{0,n}$ is formed by the morphism
$\overline{M}_{0,n+1}\to\overline{M}_{0,n}$.
This morphism is a special case of the following morphisms for inclusions 
$I\subset\{1,\ldots,n\}$ which can be defined by mapping a stable $n$-pointed 
tree $(C\to Y,s_1,\ldots,s_n)$ to its image under the morphism defined by the
sheaf $\omega_{C/Y}(\sum_{i\in I}s_i)$, where $\omega_{C/Y}$ is the relative
dualising sheaf (see \cite{Kn83}).

\begin{prop} 
Let $I\subseteq\{1,\ldots,n\}$, $|I|\geq 3$. We write $\overline{M}_{0,I}$ 
for $\overline{M}_{0,|I|}$ with the sections of poined trees indexed by $I$. 
Then there is a morphism
\[
\gamma_I\colon\:\overline{M}_{0,n}\:\to\:\overline{M}_{0,I}
\]
such that a stable $n$-pointed tree is transformed to a stable $I$-pointed
tree by forgetting the sections $\{s_i\:|\:i\not\in I\}$ and contracting
components which have become unstable.
\end{prop}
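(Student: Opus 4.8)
The plan is to reduce to forgetting a single marked point and then invoke Knudsen's contraction construction \cite{Kn83}. It suffices to build the one-point forgetful morphism $\pi_n\colon\overline{M}_{0,n}\to\overline{M}_{0,n-1}$ for $n\geq 4$; for general $I$ one composes $n-|I|$ such maps, all intermediate stages having at least three marked points. That the composite depends only on $I$ and not on the order in which points are dropped follows because $\overline{M}_{0,I}$ is separated while the open locus $M_{0,n}\subseteq\overline{M}_{0,n}$ of smooth pointed curves is dense in the reduced (indeed regular) scheme $\overline{M}_{0,n}$, and on $M_{0,n}$ every such composite is visibly the map $(\P^1,s_1,\ldots,s_n)\mapsto(\P^1,(s_i)_{i\in I})$: two morphisms from a reduced scheme to a separated scheme that agree on a dense open subscheme coincide.

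For $\pi_n$ I would argue on the level of moduli functors. Given a stable $n$-pointed genus $0$ curve $(\rho\colon C\to Y,s_1,\ldots,s_n)$ with $n\geq 4$, set $L=\omega_{C/Y}(s_1+\cdots+s_{n-1})$. On a geometric fibre $C_y$ and an irreducible component $E\cong\P^1$ one has $L|_E\cong\mathcal{O}_E(\nu_E+m_E-2)$, where $\nu_E$ is the number of nodes of $C_y$ on $E$ and $m_E$ the number of the points $s_1,\ldots,s_{n-1}$ on $E$. Since $E$ is stable in $C_y$ (at least three special points among the nodes and all of $s_1,\ldots,s_n$), this degree is always $\geq 0$; it equals $0$ precisely on the component of $C_y$ that becomes unstable once $s_n$ is forgotten (a rational bridge or tail left with only two special points), and is $\geq 1$ on every component that remains stable. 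Hence $H^1(C_y,L)=0$, $L|_{C_y}$ is globally generated, and $\chi(L|_{C_y})=\deg L+1=n-2$; by cohomology and base change $\rho_*L$ is locally free of rank $n-2$ with formation commuting with base change, and the same holds for all powers $L^{\otimes k}$. Let $C'=\Proj_Y\bigl(\bigoplus_{k\geq 0}\rho_*(L^{\otimes k})\bigr)\subseteq\P(\rho_*L)$ be the image of $C$ under the morphism defined by $L$, with the sections $s_i'$ ($i\leq n-1$) given by the images of $s_i$. On each fibre this is classical stabilization: $C'_y$ is obtained from $C_y$ by collapsing the degree-$0$ component to the point at which its two adjacent special points get identified, and is an isomorphism elsewhere, so $(C'_y,s_1'(y),\ldots,s_{n-1}'(y))$ is a stable $(n-1)$-pointed curve of genus $0$.

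It then remains to check that $C'\to Y$ is flat and proper and that the whole construction commutes with arbitrary base change $Y''\to Y$. Properness is immediate ($C'$ is closed in a projective bundle over $Y$), and base-change compatibility reduces to that of $\bigoplus_k\rho_*(L^{\otimes k})$, since the image construction then base-changes accordingly. The one delicate point is flatness of $C'$ over $Y$ near the node at which a tail or bridge is collapsed, where one needs the local description of the contraction — this is exactly the content of Knudsen's contraction theorem in \cite{Kn83}. Granting it, $(C'\to Y,s_1',\ldots,s_{n-1}')$ is a stable $(n-1)$-pointed genus $0$ curve depending functorially on $Y$, hence defines a natural transformation of moduli functors; because $\overline{M}_{0,n}$ and $\overline{M}_{0,n-1}$ are fine moduli spaces representing these functors (Definition--Theorem above), this transformation is represented by a morphism of schemes $\pi_n\colon\overline{M}_{0,n}\to\overline{M}_{0,n-1}$ with the stated fibrewise behaviour, and composing these gives $\gamma_I$. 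I expect the main obstacle to be precisely this flatness and base-change compatibility of the contraction at the collapsed node; the degree count on components, the cohomology-and-base-change step, and the reduction to forgetting one point at a time are routine. (The statement for $\overline{L}_n$ is proved the same way, using $\omega_{C/Y}(s_0+s_\infty+\sum_{i\in I}s_i)$ and contracting the components left with two special points, cf.\ \cite[Construction 3.15]{BB11}.)
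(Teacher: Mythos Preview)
Your argument is correct and follows essentially the same approach as the paper, which (in the paragraph immediately preceding the proposition) also defines $\gamma_I$ via Knudsen's contraction using the sheaf $\omega_{C/Y}(\sum_{i\in I}s_i)$ and simply cites \cite{Kn83}. The only organisational difference is that you factor through iterated one-point forgetful maps and then compose (checking order-independence via density of $M_{0,n}$ and separatedness), whereas the paper states the contraction in one step; both rest on the same Knudsen construction.
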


\subsection{Embeddings into products of $\P^1\!$ and relation to root systems of type $A$}

We consider the cross-ratio varieties for root subsystems of type $A_3$ in $A_{n-1}$ 
defined and studied in \cite{Sek94}, \cite{Sek96}. It was already observed in 
\cite{Sek96} that the cross-ratio variety for root subsystems of type $A_3$ in $A_{n-1}$ 
should be isomorphic to the Grothendieck-Knudsen moduli space $\overline{M}_{0,n}$. 

\medskip

The root lattice of $A_n$ is the sublattice $M(A_{n-1})\subset L(A_{n-1})=\Z^n$ 
generated by the roots $\alpha_{i,j}=e_i-e_j$ where $i,j\in\{1,\ldots,n\}$ 
and $e_1,\ldots,e_n$ are the standard basis vectors of $\Z^n$. We have the 
lattice $L(A_{n-1})^*$ dual to $L(A_{n-1})$ and the lattice 
$N(A_{n-1})=L(A_{n-1})^*/(1,\ldots,1)\Z$ dual to $M(A_{n-1})$.

\medskip

We consider cross-ratio varieties for root systems of type $A_3$ in $A_{n-1}$.
Let $\A(L(A_{n-1}))$ be the $n$-dimensional affine space with coordinates 
$t_1,\ldots,t_n$ corresponding to the basis $e_1,\ldots,e_n$. 
We have the open subscheme
$U(A_{n-1})=\A(L(A_{n-1}))\setminus\bigcup_{i\neq j}\{t_i=t_j\}$.
For root subsystems $A_3\cong\Delta\subseteq A_{n-1}$ consisting of roots
$\alpha_{i,j}$ for $i,j\in I=\{i_1,\ldots,i_4\}\subseteq\{1,\ldots,n\}$, $|I|=4$
there is the morphism
\begin{equation}\label{eq:cAn1Delta}
c_{A_{n-1},\Delta}^{i_1,i_2,i_3,i_4}=
((t_{i_1}-t_{i_4})(t_{i_2}-t_{i_3}):(t_{i_1}-t_{i_3})(t_{i_2}-t_{i_4}))
\colon\;U(A_{n-1})\;\to\;\P^1
\end{equation}
depending on the choice of an ordering $I=\{i_1,i_2,i_3,i_4\}$ or equivalently 
a simple system $\alpha_{i_1,i_2},\alpha_{i_2,i_3},\alpha_{i_3,i_4}$ in 
$\Delta\cong A_3$.
The morphisms $c_{A_{n-1},\Delta}^{i_1,i_2,i_3,i_4}$ for different orderings 
$I=\{i_1,\ldots,i_4\}$ are related by isomorphisms of $\P^1$: for a permutation 
$\sigma$ there is an isomorphism $\phi_\sigma$ of $\P^1$ such that 
$c_{A_{n-1},\Delta}^{\sigma(i_1),\sigma(i_2),\sigma(i_3),\sigma(i_4)}=
\phi_\sigma\circ c_{A_{n-1},\Delta}^{i_1,i_2,i_3,i_4}$, where $\phi_\sigma$ 
for generators of the permutation group is given by the following matrices $M_\sigma$: 
\begin{equation}\label{eq:matrices-permutations}
M_{(i_1i_2)}=\left(\begin{smallmatrix}0&1\\1&0\end{smallmatrix}\right),\qquad
M_{(i_1i_3)}=\left(\begin{smallmatrix}-1&1\\0&1\end{smallmatrix}\right),\qquad
M_{(i_3i_4)}=\left(\begin{smallmatrix}0&1\\1&0\end{smallmatrix}\right)
\end{equation}

\begin{exa}\label{ex:xA3A3} $\overline{X}_{A_3,A_3}\cong\P^1$.\\
For each ordering $\{1,\ldots,4\}=\{i_1,i_2,i_3,i_4\}$ we have a copy of $\P^1$
with homogeneous coordinates $x^{i_1,i_2,i_3}_{i_4,0},x^{i_1,i_2,i_3}_{i_4,1}$
and the morphism $c_{A_3,A_3}^{i_1,i_2,i_3,i_4}\colon U(A_3)\to\P^1$ defined 
in (\ref{eq:cAn1Delta}). Let $X_{A_3,A_3}\subset\prod_{i\in S_4}\P^1$ 
the image of $c_{A_3,A_3}=\prod_{i\in S_4}c_{A_3,A_3}^{i_1,i_2,i_3,i_4}$, 
then $X_{A_3,A_3}\cong\P^1\setminus\{(0:1),\linebreak(1:0)\}$ and its closure 
$\overline{X}_{A_3,A_3}\cong\P^1$ via projections to the factors.
Using (\ref{eq:matrices-permutations}), $\overline{X}_{A_3,A_3}$
can be described as the subscheme of $\prod_{i\in S_4}\P^1$ defined by  
equations of the form
\begin{equation}\label{eq:M04-x}
\begin{array}{l} 
x^{i_2,i_1,i_3}_{i_4,0}x^{i_1,i_2,i_3}_{i_4,0}
=x^{i_2,i_1,i_3}_{i_4,1}x^{i_1,i_2,i_3}_{i_4,1}\\
x^{i_3,i_2,i_1}_{i_4,0}x^{i_1,i_2,i_3}_{i_4,1}=
x^{i_3,i_2,i_1}_{i_4,1}\big(x^{i_1,i_2,i_3}_{i_4,1}-x^{i_1,i_2,i_3}_{i_4,0}\big)\\
x^{i_1,i_2,i_4}_{i_3,0}x^{i_1,i_2,i_3}_{i_4,0}
=x^{i_1,i_2,i_4}_{i_3,1}x^{i_1,i_2,i_3}_{i_4,1}\\
\end{array}
\end{equation}
There are the factorisations 
$c_{A_3,A_3}^{i_1,i_2,i_3,i_4}=\phi_{A_3}^{i_1,i_2,i_3,i_4}\circ c_{A_3,A_3}\colon 
U(A_3)\to\overline{X}_{A_3,A_3}\stackrel{\sim}{\to}\P^1$ where 
$\phi_{A_3}^{i_1,i_2,i_3,i_4}\colon\overline{X}_{A_3,A_3}\stackrel{\sim}{\to}\P^1$ 
is induced by the projection onto the corresponding factor.
\end{exa}

In general for root systems $\Delta\cong A_3$ in $A_{n-1}$ we have a morphism 
$c_{A_{n-1},\Delta}\colon U(A_{n-1})\to U(\Delta)
\stackrel{c_{\Delta,\Delta}}{\longrightarrow}\overline{X}_{\Delta,\Delta}$ 
such that
\[
c^{i_1,i_2,i_3,i_4}_{A_{n-1},\Delta}
=\phi_\Delta^{i_1,i_2,i_3,i_4}\circ c_{A_{n-1},\Delta}\colon
\;U(A_{n-1})\;\to\;U(\Delta)\;\to\;\overline{X}_{\Delta,\Delta}\;\stackrel{\sim}{\to}\;\P^1.
\]

\begin{defi}
We denote the image $X_{A_{n-1},A_3}=\big(\prod_{A_3\cong\Delta\subseteq A_{n-1}}
c_{A_{n-1},\Delta}\big)\big(U(A_{n-1})\big)$ and define the {\it cross-ratio variety} 
$\overline{X}_{A_{n-1},A_3}$ for root systems of type $A_3$ in $A_{n-1}$ as the 
closure
\[\textstyle
\overline{X}_{A_{n-1},A_3}\;=\;\overline{X_{A_{n-1},A_3}}
\;\subseteq\;\prod_{A_3\cong\Delta\subseteq A_{n-1}}\overline{X}_{\Delta,\Delta}
\;\cong\;\prod_{A_3\cong\Delta\subseteq A_{n-1}}\P^1.
\]
Let $\overline{c}_{A_{n-1},\Delta}\colon\overline{X}_{A_{n-1},A_3}\to
\overline{X}_{\Delta,\Delta}$ be the morphisms induced by the projection on the 
factors of the product.
\end{defi}

In a similar way we can construct a scheme from the set of root subsystems 
of type $A_1$ in $A_{n-1}$. We have the $n$-dimensional projective space 
$\P(L(A_{n-1}))$ with homogeneous coordinates $t_1,\ldots,t_n$ corresponding 
to the basis $e_1,\ldots,e_n$. The open subscheme 
$T(A_{n-1})=\P(L(A_{n-1}))\setminus\bigcup_i\{t_i=0\}\subset \P(L(A_{n-1}))$ 
is the algebraic torus with character lattice $M(A_{n-1})$. For root subsystems 
$A_1\cong\{\pm\alpha_{i,j}\}\subseteq A_{n-1}$ we have morphisms
\[
c_{A_{n-1},\{\pm\alpha_{i,j}\}}^{i,j}=(t_i:t_j)\colon\;T(A_{n-1})\;\to\;\P^1
\]
depending on the choice of an ordering of $\{i,j\}$ or equivalently on the choice 
of a simple system $\alpha_{i,j}$ in $\{\pm\alpha_{i,j}\}$.
The morphisms $c_{A_{n-1},\{\pm\alpha_{i,j}\}}^{i,j}$ and 
$c_{A_{n-1},\{\pm\alpha_{i,j}\}}^{j,i}$ for the two different orderings of
$\{i,j\}$ are related by the isomorphism of $\P^1$ given by the matrix
$\left(\begin{smallmatrix}0&1\\1&0\end{smallmatrix}\right)$.

\begin{exa}\label{ex:xA1A1} $\overline{X}_{A_1,A_1}\cong\P^1$.\\
In the case of $A_1=\{\pm\alpha_{i,j}\}$ for each odering of $\{i,j\}$ 
we have a copy of $\P^1$ with homogeneous coordinates $x^j_{i,0},x^j_{i,1}$ 
and a morphism $c_{A_1,A_1}^{i,j}\colon U(A_3)\to\P^1$. 
Let $X_{A_1,A_1}\subset\P^1\times\P^1$ be the image of 
$c_{A_1,A_1}=c_{A_1,A_1}^{i,j}\times c_{A_1,A_1}^{j,i}$, 
then $X_{A_1,A_1}\cong\P^1\setminus\{(0:1),(1:0)\}$ and its closure 
$\overline{X}_{A_1,A_1}\cong\P^1$ via projections to the factors.
$\overline{X}_{A_1,A_1}$ can be described as the subscheme of $\P^1\times\P^1$ 
defined by the equation $x^i_{j,0},x^j_{i,0}=x^i_{j,1}x^j_{i,1}$.
The morphism $c_{A_1,A_1}^{i,j}$ factors as 
$T(A_1)\stackrel{c_{A_1,A_1}}{\longrightarrow}\overline{X}_{A_1,A_1}
\stackrel{\sim}{\to}\P^1$, where the last morphism is induced by the projection 
onto the corresponding factor.
\end{exa}

For root subsystems $A_1\cong\{\pm\alpha_{i,j}\}\subseteq A_{n-1}$ 
the projection $T(A_{n-1})\to T(\{\pm\alpha_{i,j}\})$ composed with
$c_{\{\pm\alpha_{i,j}\},\{\pm\alpha_{i,j}\}}$ gives the morphism
\[
c_{A_{n-1},\{\pm\alpha_{i,j}\}}\colon\;T(A_{n-1})\;\to\;T(\{\pm\alpha_{i,j}\})
\to\overline{X}_{\{\pm\alpha_{i,j}\},\{\pm\alpha_{i,j}\}}
\]
that satisfies $c_{\{\pm\alpha_{i,j}\},\{\pm\alpha_{i,j}\}}^{i,j}\circ 
c_{A_{n-1},\{\pm\alpha_{i,j}\}}=c_{A_{n-1},\{\pm\alpha_{i,j}\}}^{i,j}
\colon T(A_{n-1})\to\P^1$. 
 
\medskip

The morphisms $c_{A_{n-1},\{\pm\alpha_{i,j}\}}\colon T(A_{n-1})\to
X_{\{\pm\alpha_{i,j}\},\{\pm\alpha_{i,j}\}}\cong\G_m$ are homomorphisms of algebraic tori.
Their product, the homomorphism of tori 
$\prod_{A_1\cong\{\pm\alpha_{i,j}\}\subseteq A_{n-1}}c_{A_{n-1},\{\pm\alpha_{i,j}\}}$, 
embeds $T(A_{n-1})$ into the open dense torus of
$\prod_{A_1\cong\{\pm\alpha_{i,j}\}\subseteq A_{n-1}}
\overline{X}_{\{\pm\alpha_{i,j}\},\{\pm\alpha_{i,j}\}}$, we denote its image 
$X_{A_{n-1},A_1}$. The closure
\[\textstyle
\overline{X}_{A_{n-1},A_1}\:=\;\overline{X_{A_{n-1},A_1}}
\;\subseteq\;\prod_{A_3\cong\{\pm\alpha_{i,j}\}\subseteq A_{n-1}}
\overline{X}_{\{\pm\alpha_{i,j}\},\{\pm\alpha_{i,j}\}}
\:\cong\;\prod_{A_1\cong\{\pm\alpha_{i,j}\}\subseteq A_{n-1}}\P^1
\]
coincides with the toric variety $X(A_{n-1})$ associated with the root system $A_{n-1}$, 
i.e.\ its fan is the fan of Weyl chambers of the root system $A_{n-1}$ in the 
lattice $N(A_{n-1})$, because on the open dense torus $T(A_{n-1})\subset X(A_{n-1})$ the 
morphisms $c_{A_{n-1},\{\pm\alpha_{i,j}\}}$ coincide with the morphisms 
\[
\gamma_{\{\pm\alpha_{ij}\}}\colon\;X(A_{n-1})\;\to\;X(A_1)\cong\P^1
\]
for inclusions of root subsystems $\{\pm\alpha_{ij}\}\subseteq A_{n-1}$ of 
type $A_1$ and the morphism\linebreak
$\prod_{\{\pm\alpha_{ij}\}}\gamma_{\{\pm\alpha_{ij}\}}$
is a closed embedding (see \cite{BB11} or theorem \ref{thm:Ln-ProdP1-functor} below).

\medskip

Already in \cite[(2.6.3)]{LM00} the Losev-Manin moduli space $\overline{L}_n$ 
has been identified as the smooth projective toric variety associated with 
the $(n-1)$-dimensional permutohedron, which coincides with $X(A_{n-1})$.
In \cite{BB11} an alternative proof that the functor $\overline{L}_n$ is 
representable and the fine moduli space $\overline{L}_n$ is isomorphic to 
the smooth projective toric variety $X(A_{n-1})$ is given after a systematic study of 
toric varieties $X(R)$ associated with root systems $R$, their functorial properties 
with respect to maps of root systems and their functor of points, and it is shown that 
the morphisms $\gamma_R\colon X(A_{n-1})\to X(R)\cong X(A_{k-1})$ corresponding to 
inclusions of root systems $A_{k-1}\cong R\subset A_{n-1}$ can be identified with the 
contraction morphisms $\gamma_I\colon\overline{L}_n\to\overline{L}_I\cong\overline{L}_{k}$ 
determined by subsets $I\subset\{1,\ldots,n\}$, $|I|=k$.

\begin{exa}\label{ex:L2} $\overline{L}_2\cong\P^1$.\\
Each $i\in\{1,2\}$ defines via the line bundle $\O_C(s_i)$ a contraction of 
any stable $2$-pointed chain $(C\to Y,s_0,s_\infty,s_1,s_2)$ over a scheme $Y$ 
to a trivial $\P^1$-bundle over $Y$ with three pairwise disjoint sections 
$s^i_0,s^i_\infty,s^i_i$ and a fourth section $s^i_j$.
We can choose homogeneous coordinates $x^i_{j,0},x^i_{j,1}$ such that 
$s^i_0=(0:1)$, $s^i_\infty=(1:0)$, $s^i_i=(1:1)$.
The remaining section $s^i_j=(s^i_{j,0}:s^i_{j,1})$ of $\P^1_Y$ determines an 
isomorphism $\overline{L}_2\cong\P^1$.
Since $(s^i_{j,0}:s^i_{j,1})=(s^j_{i,1}:s^j_{i,0})$, $\overline{M}_{0,4}$ can be 
embedded in $\P^1\times\P^1$ as the subscheme defined by the same equation 
$x^i_{j,0},x^j_{i,0}=x^i_{j,1}x^j_{i,1}$ as in example \ref{ex:xA1A1}, and the 
two projections induce the above isomorphisms $\overline{M}_{0,4}\cong\P^1$ 
corresponding to the two choices of an ordering of $\{1,2\}$.
\end{exa}

\begin{thm}\label{thm:Ln-ProdP1-functor} {\rm(\cite{BB11}).}
The morphism
\[
\prod\limits_{\{i,j\}}\!\gamma_{\{i,j\}}\colon\overline{L}_n\to
\prod\limits_{\{i,j\}}\overline{L}_{\{i,j\}}
\;\:\textit{or equivalently}
\prod\limits_{\{\pm\alpha_{i,j}\}}\!\!\!\!\gamma_{\{\pm\alpha_{i,j}\}}\colon
X(A_{n-1})\to\!\!\!
\prod\limits_{\{\pm\alpha_{i,j}\}}\!\!\!\!X(\{\pm\alpha_{i,j}\}),
\]
where the product is over all subsets $I=\{i,j\}\subset\{1,\ldots,n\}$,
$|I|=2$ or equivalently over all root subsystems 
$A_1\cong\{\pm\alpha_{i,j}\}\subseteq A_{n-1}$, is a closed embedding. 
Its image in $\prod_{\{i,j\}}\overline{L}_{\{i,j\}}
=\prod_{\{\pm\alpha_{i,j}\}}X({\{\pm\alpha_{i,j}\}})$ 
is given by the equations of the form
\[
x^j_{i,0}x^i_{k,0}x^j_{k,1}=x^j_{i,1}x^i_{k,1}x^j_{k,0}
\]
for all subsets $J=\{i,j,k\}\subseteq\{1,\ldots,n\}$, $|J|=3$
or equivalently all root subsystems
$A_2\cong\{\pm\alpha_{i,j},\pm\alpha_{j,k},\pm\alpha_{i,k}\}\subseteq A_{n-1}$,
where $x^i_{j,0},x^i_{j,1}$ are the homogeneous coordinates of 
$\overline{L}_{\{i,j\}}=X(\{\pm\alpha_{i,j}\})$ as defined in examples \ref{ex:xA1A1}
and \ref{ex:L2}. The functor of points of $\overline{L}_n=X(A_{n-1})$ is isomorphic 
to the contravariant functor
\begin{equation}\label{eq:Ln-functor} 
\textstyle
Y\;\mapsto\;
\left\{
\begin{array}{l}
\textit{families of sections}\\
(s^i_j\colon Y\to\P^1_Y)_{i,j\in\{1,\ldots,n\}}\\
\end{array}
\left|
\begin{array}{l}
\forall\,i\colon s^i_{i,0}=s^i_{i,1}\\
\forall\,i,j,k\colon s^j_{i,0}s^i_{k,0}s^j_{k,1}=s^j_{i,1}s^i_{k,1}s^j_{k,0}
\end{array}
\right.\right\}
\end{equation}
\end{thm}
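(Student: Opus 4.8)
The plan is to prove the three assertions in turn: that the product morphism is a closed embedding, that the image is cut out by the displayed cubic equations, and that the resulting functor description holds. Since this theorem is attributed to \cite{BB11}, the proof here should be a summary of the argument given there, organised around the toric picture $X(A_{n-1})$ together with the interpretation via pointed chains.

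\medskip

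\textbf{Step 1: The morphism is a closed embedding.} First I would work on the toric side, where $\overline{L}_n \cong X(A_{n-1})$ is the toric variety whose fan consists of the Weyl chambers of $A_{n-1}$ in the lattice $N(A_{n-1})$. The morphisms $\gamma_{\{\pm\alpha_{i,j}\}}\colon X(A_{n-1})\to X(\{\pm\alpha_{i,j}\})\cong\P^1$ are the toric morphisms induced by the projections of lattices dual to the inclusions $\{\pm\alpha_{i,j}\}\subseteq A_{n-1}$ of root systems. To see that the product $\prod_{\{i,j\}}\gamma_{\{\pm\alpha_{i,j}\}}$ is a closed embedding, one checks it is injective on the open torus $T(A_{n-1})$ — which holds because the roots $\alpha_{i,j}$ span the character lattice $M(A_{n-1})$, so the map on tori is injective — and then checks separately that it is injective on each torus orbit and that it separates tangent vectors, using that the fan of $X(A_{n-1})$ is the common refinement of the preimages of the fans of the $X(\{\pm\alpha_{i,j}\})$; equivalently, one invokes the general criterion from \cite{BB11} for when a product of the canonical morphisms $\gamma_R$ is an embedding.

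\medskip

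\textbf{Step 2: The image is cut out by the cubic relations.} Working in the homogeneous coordinates of example \ref{ex:L2}, on the open torus $T(A_{n-1})$ the coordinate $(x^j_{i,0}:x^j_{i,1})$ pulls back to $(t_i:t_j)$, so the relation $x^j_{i,0}x^i_{k,0}x^j_{k,1}=x^j_{i,1}x^i_{k,1}x^j_{k,0}$ becomes the identity $t_i t_k t_j = t_j t_k t_i$ — it holds on the torus, hence on its closure $\overline{L}_n$. Conversely I would show that the closed subscheme $W\subseteq\prod_{\{i,j\}}\P^1$ defined by all these cubics contains $\overline{L}_n$ and has the same dimension, and is reduced and irreducible, so the two coincide; alternatively, one exhibits $W$ explicitly as a union of torus orbits matching those of $X(A_{n-1})$, using that each cubic relation corresponds to an $A_2$-subsystem and these relations among triples generate the ideal. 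The key point is a combinatorial bookkeeping: a point of $\prod\P^1$ satisfying all the $A_2$-relations determines, via the coordinates $s^i_j$, a consistent system which reconstructs a point of the toric variety.

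\medskip

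\textbf{Step 3: The functor description.} Finally, since $\overline{L}_n$ is a fine moduli space (Definition--Theorem after \cite{LM00}), its functor of points is computed by composing with the closed embedding: a $Y$-point of $\overline{L}_n$ is the same as a $Y$-point of $\prod_{\{i,j\}}\overline{L}_{\{i,j\}}\cong\prod_{\{i,j\}}\P^1$ lying in the closed subscheme, i.e.\ a family of sections $(s^i_j)$ of $\P^1_Y$ satisfying the equations. The normalisation conditions $s^i_{i,0}=s^i_{i,1}$ encode that $\gamma_{\{i\}}$ sends the $i$-th section to the distinguished point $(1:1)$ as in example \ref{ex:L2}, and $s^i_j$ is the image of the $j$-th section under the $i$-normalised contraction. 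I expect the main obstacle to be Step 2 — verifying that the $A_2$-relations suffice to cut out the image scheme-theoretically (not just set-theoretically), which requires either a careful analysis of the toric ideal or a dimension-plus-reducedness argument; everything else is either the standard toric translation or a formal consequence of representability.
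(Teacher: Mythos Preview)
Your proposal is a reasonable sketch, but there is nothing to compare it against: the paper does not prove this theorem. It is stated with the attribution {\rm(\cite{BB11})} and no proof environment follows; the next paragraph moves on to the analogous discussion for $\overline{M}_{0,n}$. The theorem is simply quoted as input from \cite{BB11} and used later (in the proof of theorem~\ref{thm:Ln-limPn}) via the functor description~(\ref{eq:Ln-functor}). You correctly anticipated this in your opening sentence.

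If one wanted to assess your outline on its own merits: Steps~1 and~3 are accurate summaries of the toric argument and the formal passage to the functor of points. Your caveat in Step~2 is well placed --- the scheme-theoretic statement that the $A_2$-cubics generate the ideal is the only nontrivial part, and a dimension-plus-irreducibility argument alone would not suffice without also checking reducedness over $\Spec\Z$; in \cite{BB11} this is handled by an explicit toric/combinatorial analysis rather than a generic dimension count. But again, none of this appears in the present paper.
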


Similarly, we can describe $\overline{M}_{0,n}$ and its embedding into 
a product of $\P^1$'s in terms of the root system $A_{n-1}$. First we consider the 
case of $\overline{M}_{0,4}$.

\begin{exa}\label{ex:M04} $\overline{M}_{0,4}\cong\P^1$.\\ 
We choose a permutation $I=\{1,\ldots,4\}=\{i_1,\ldots,i_4\}$.
Three sections $s_{i_1},s_{i_2},s_{i_3}$ define a contraction of any stable 
$4$-pointed curve $(C\to Y,s_{i_1},s_{i_2},s_{i_3},s_{i_4})$ to a trivial 
$\P^1$-bundle over $Y$ via the line bundle $\omega_{C/Y}(s_{i_1}+s_{i_2}+s_{i_3})$
(cf.\ \cite{Kn83}).
Choosing homogeneous coordinates $x^{i_1,i_2,i_3}_{i_4,0},x^{i_1,i_2,i_3}_{i_4,1}$
of $\P^1_Y$ such that $s_{i_1}=(0\!:\!1)$, $s_{i_2}=(1\!:\!0)$, $s_{i_3}=(1\!:\!1)$,
the remaining section $s_{i_4}=(s^{i_1,i_2,i_3}_{i_4,0}:s^{i_1,i_2,i_3}_{i_4,1})$ 
of $\P^1_Y$ yields an isomorphism $\overline{M}_{0,4}\cong\P^1$.
Under permutations of $I$ this fourth section is transformed by isomorphisms of $\P^1$ 
as in (\ref{eq:matrices-permutations}), so the sections in these coordinates are related by
\begin{equation}\label{eq:M04-s3}
\begin{array}{l} 
s^{i_2,i_1,i_3}_{i_4,0}s^{i_1,i_2,i_3}_{i_4,0}
=s^{i_2,i_1,i_3}_{i_4,1}s^{i_1,i_2,i_3}_{i_4,1}\\
s^{i_3,i_2,i_1}_{i_4,0}s^{i_1,i_2,i_3}_{i_4,1}=
s^{i_3,i_2,i_1}_{i_4,1}\big(s^{i_1,i_2,i_3}_{i_4,1}-s^{i_1,i_2,i_3}_{i_4,0}\big)\\
\end{array}
\end{equation}
for permutations of the upper indices and by
\begin{equation}\label{eq:M04-s4}
s^{i_1,i_2,i_3}_{i_4,0}s^{i_1,i_2,i_4}_{i_3,0}
=s^{i_1,i_2,i_3}_{i_4,1}s^{i_1,i_2,i_4}_{i_3,1}.\hspace{1.84cm}
\end{equation}
Therefore the moduli space $\overline{M}_{0,4}$ can be described as the subscheme in
$\prod_{i\in S_4}\P^1$ by equations of the form (\ref{eq:M04-x}), where each projection
induces the above isomorphism $\overline{M}_{0,4}\cong\P^1$ corresponding to a choice 
of an ordering of $I$.
\end{exa}

The space $U(A_{n-1})=\A(L(A_{n-1}))\setminus\bigcup_{i\neq j}\{t_i=t_j\}$
parametrises $n$ distinct points in $\A^1=\P^1\setminus\{(1:0)\}$, so its
tautological family determines a morphism $\psi\colon U(A_{n-1})\to M_{0,n}$.
A point $(a_1,a_2,a_3,a_4)\in U(A_3)$, considered as tuple
$((a_1:1),(a_2:1),(a_3:1),(a_4:1)))$ of points in $\P^1$, can be transformed by the
matrix $\left(\begin{smallmatrix}-(a_2-a_3)&\:a_1(a_2-a_3)\\-(a_1-a_3)&\:a_2(a_1-a_3)
\end{smallmatrix}\right)$ into the tuple $((0:1),(1:0),(1:1),
((a_1-a_4)(a_2-a_3):(a_2-a_4)(a_1-a_3))))$ (compare to (\ref{eq:cAn1Delta})). 
We obtain the following result:

\begin{lemma}\label{le:cDelta-gammaI}
Let $I=\{i_1,i_2,i_3,i_4\}\subseteq\{1,\ldots,n\}$, $|I|=4$ and 
$A_3\cong\Delta=\{\alpha_{i,j}\:|\:i,j\in I\}\linebreak\subseteq A_{n-1}$ 
the corresponding root subsystem. We identify 
$\overline{M}_{0,I}\cong\P^1$ as in example \ref{ex:M04} by 
$(C\to Y,(s_i)_i)\mapsto(s_{i_4}\colon Y\to\P^1_Y)$ after contracting a curve $C\to Y$ 
over $Y$ to $\,\P^1_Y$ and choosing a basis such that the sections 
$s_{i_1},s_{i_2},s_{i_3}$ become $(0:1),(1:0),(1:1)$, and denote the composition
$\gamma_I^{i_1,i_2,i_3,i_4}\colon\overline{M}_{0,n}\stackrel{\!\!\gamma_I\:}{\to}
\overline{M}_{0,I}\stackrel{\sim}{\to}\P^1$.
This relates to the morphism $c_{A_{n-1},\Delta}^{i_1,i_2,i_3,i_4}\colon 
U(A_{n-1})\to U(\Delta)\to\overline{X}_{\Delta,\Delta}\stackrel{\sim}{\to}\P^1$ 
of {\rm(\ref{eq:cAn1Delta})} as follows:
\[
c_{A_{n-1},\Delta}^{i_1,i_2,i_3,i_4}=\gamma_I^{i_1,i_2,i_3,i_4}\circ\psi.
\]
\end{lemma}

The following theorem is based on the closed embeddings of $\overline{M}_{0,n}$
and its universal curve in products of $\P^1$'s described in \cite[Prop.\ 4]{GHP88}. 
We use this to deduce a characterisation of the functor of points of $\overline{M}_{0,n}$, 
add the interpretation in terms of root systems and the link to the cross-ratio 
varieties using lemma \ref{le:cDelta-gammaI}.

\begin{thm}\label{thm:M0n-ProdP1-functor}
The morphism
\[
\prod\limits_{|I|=4}\!\gamma_I\colon\:\overline{M}_{0,n}\;\to\;
\prod\limits_{|I|=4}\overline{M}_{0,I},
\]
where the product is over all subsets $I\subseteq\{1,\ldots,n\}$,
$|I|=4$, is a closed embedding. This embedding induces an isomorphism 
$\overline{M}_{0,n}\to\overline{X}_{A_{n-1},A_3}$ and can be identified 
with the closed embedding
\[
\prod\limits_{A_3\cong\Delta\subseteq A_{n-1}\!\!\!\!\!\!\!}\!\!\!\!\!
\overline{c}_{A_{n-1},\Delta}\colon\;
\overline{X}_{A_{n-1},A_3}\;\:\to\!
\prod\limits_{A_3\cong\Delta\subseteq A_{n-1}\!\!\!\!\!\!\!}\!\!\!\!\!
\overline{X}_{\Delta,\Delta},
\]
where the product is over all root subsystems $\Delta$ isomorphic to $A_3$.
The image of $\overline{M}_{0,n}\cong\overline{X}_{A_{n-1},A_3}$ in 
$\prod_I\overline{M}_{0,I}\cong\prod_{\Delta}\overline{X}_{\Delta,\Delta}$ 
is given by the equations of the form
\[
x^{i_1,i_2,i_3}_{i_4,0}x^{i_1,i_2,i_3}_{i_5,1}x^{i_1,i_2,i_4}_{i_5,0}
=x^{i_1,i_2,i_3}_{i_4,1}x^{i_1,i_2,i_3}_{i_5,0}x^{i_1,i_2,i_4}_{i_5,1}
\]
for all subsets $J\subseteq\{1,\ldots,n\}$, $|J|=5$ 
or equivalently all root subsystems $A_4\cong\Gamma\subseteq A_{n-1}$,
where the homogeneous coordinates $x^{i_1,i_2,i_3}_{i_4,0},x^{i_1,i_2,i_3}_{i_4,1}$
are defined as in examples \ref{ex:xA3A3} and \ref{ex:M04}, and homogeneous coordinates
corresponding to different permutations of a set $I\subset\{1,\ldots,n\}$, $|I|=4$ 
for the same $\overline{M}_{0,I}=\overline{X}_{\Delta,\Delta}$ are related by equations 
{\rm(\ref{eq:M04-x})}.
The functor of points of $\overline{M}_{0,n}$ is isomorphic to the contravariant functor
\begin{equation}\label{eq:M0n-functor} 
Y\!\mapsto\!
\left\{\!\!
\begin{array}{l}
\textit{families of sections}\\
s^{i_1,i_2,i_3}_{i_4}\colon Y\to\P^1_Y\\
\textit{for $i_1,i_2,i_3,i_4\in\{1,\ldots,n\}$}\\
\textit{such that $|\{i_1,i_2,i_3\}|=3$}\\
\end{array}
\!\!\!\left|\!
\begin{array}{l}
\forall\,T=\{i_1,i_2,i_3\}, |T|=3\colon\\
s^{i_1,i_2,i_3}_{i_1}\!\!=\!(0\!:\!1),s^{i_1,i_2,i_3}_{i_2}\!\!=\!(1\!:\!0), 
s^{i_1,i_2,i_3}_{i_3}\!\!=\!(1\!:\!1)\\
\forall\,I=\{i_1,i_2,i_3,i_4\}, |I|=4\colon\\
\textit{equations {\rm(\ref{eq:M04-s3})} and {\rm(\ref{eq:M04-s4})} hold}\\
\forall\,J=\{i_1,i_2,i_3,i_4,i_5\}, |J|=5\colon\\
s^{i_1,i_2,i_3}_{i_4,0}s^{i_1,i_2,i_3}_{i_5,1}s^{i_1,i_2,i_4}_{i_5,0}
=s^{i_1,i_2,i_3}_{i_4,1}s^{i_1,i_2,i_3}_{i_5,0}s^{i_1,i_2,i_4}_{i_5,1}
\end{array}\!\!\!\!
\right.\right\}
\end{equation}
\end{thm}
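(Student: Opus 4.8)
The plan is to reduce the statement to the closed embedding of \cite{GHP88}, to transport it to the cross-ratio picture by means of lemma \ref{le:cDelta-gammaI}, and finally to read off the functor of points. Throughout I think of a stable $n$-pointed curve over $Y$, after choosing three of its sections and contracting via $\omega_{C/Y}(s_{i_1}+s_{i_2}+s_{i_3})$, as a trivial $\P^1$-bundle with the three chosen sections at $(0:1),(1:0),(1:1)$, as in example \ref{ex:M04}; this is the source of all the coordinates $s^{i_1,i_2,i_3}_{i_4}$.

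First I would show that $\prod_{|I|=4}\gamma_I$ is a closed embedding. Since $\overline{M}_{0,n}$ is proper over $S$ and $\prod_{|I|=4}\overline{M}_{0,I}$ is separated, the morphism is proper, so it suffices to prove it is a monomorphism, i.e.\ injective on $Y$-valued points for every $S$-scheme $Y$. Given two stable $n$-pointed genus $0$ curves over $Y$ with the same image, over the open subscheme of $Y$ where the geometric fibres are smooth one may normalise three of the sections to $(0:1),(1:0),(1:1)$, and the positions of the remaining sections are then recovered from the images under the $\gamma_I$; so the two curves are isomorphic there, and by separatedness of $\overline{M}_{0,n}$ together with density this isomorphism propagates. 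This is essentially the content of \cite{GHP88}, which I would cite for the details. For the identification with the cross-ratio variety, lemma \ref{le:cDelta-gammaI} gives $\gamma_I^{i_1,i_2,i_3,i_4}\circ\psi=c_{A_{n-1},\Delta}^{i_1,i_2,i_3,i_4}$ for every ordering of $I$, so $(\prod_{|I|=4}\gamma_I)\circ\psi$ and $(\prod_{A_3\cong\Delta\subseteq A_{n-1}}\overline{c}_{A_{n-1},\Delta})\circ\psi$ coincide on $U(A_{n-1})$. As $\psi$ has dense image in $\overline{M}_{0,n}$ and both composites are closed embeddings into a separated scheme, their scheme-theoretic images agree, which yields the isomorphism $\overline{M}_{0,n}\cong\overline{X}_{A_{n-1},A_3}$ compatibly with the two embeddings.

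Next I would pin down the equations cutting out the image. The relations (\ref{eq:M04-x}) among the homogeneous coordinates attached to different orderings of a fixed $4$-element set are forced by the coordinate changes of example \ref{ex:M04}, hence hold on each $\overline{M}_{0,I}$ and a fortiori on $\overline{M}_{0,n}$. For the degree-$5$ equations I would use that $M_{0,n}$ is dense in $\overline{M}_{0,n}$, so it suffices to check them on $M_{0,n}$; there, substituting $x^{i_1,i_2,i_3}_{i_4}=((t_{i_1}-t_{i_4})(t_{i_2}-t_{i_3}):(t_{i_1}-t_{i_3})(t_{i_2}-t_{i_4}))$ via lemma \ref{le:cDelta-gammaI} turns each such equation into an identity of rational functions in $t_1,\ldots,t_n$, verified by a direct computation. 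This shows $\overline{M}_{0,n}$ is contained in the closed subscheme $Z\subseteq\prod_{|I|=4}\overline{M}_{0,I}$ defined by (\ref{eq:M04-x}) and the degree-$5$ equations; that $Z=\overline{M}_{0,n}$ is the part I would take from \cite{GHP88} (equivalently, these are the defining equations of $\overline{X}_{A_{n-1},A_3}$, cf.\ \cite{Sek96}), alternatively arguing that $Z$ is reduced, irreducible and of dimension $n-3$ so that the inclusion of the $(n-3)$-dimensional irreducible variety $\overline{M}_{0,n}$ is an equality.

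Finally the functor description would fall out by combining the above with example \ref{ex:M04} and representability of the $\overline{M}_{0,I}$: a $Y$-valued point of $\overline{M}_{0,I}$ for $|I|=4$ is the same datum as a system of sections $s^{i_1,i_2,i_3}_{i_4}\colon Y\to\P^1_Y$ indexed by the orderings of $I$ with $s^{i_1,i_2,i_3}_{i_1}=(0:1)$, $s^{i_1,i_2,i_3}_{i_2}=(1:0)$, $s^{i_1,i_2,i_3}_{i_3}=(1:1)$ and subject to (\ref{eq:M04-s3}) and (\ref{eq:M04-s4}); hence a $Y$-valued point of $\prod_{|I|=4}\overline{M}_{0,I}$ is precisely a family $(s^{i_1,i_2,i_3}_{i_4})$ as in (\ref{eq:M0n-functor}) satisfying all conditions there except the last, and it lies in the closed subscheme $\overline{M}_{0,n}$ exactly when the degree-$5$ equations hold as well. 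I expect the main obstacle to be the scheme-theoretic claim of the previous paragraph, namely that the relations (\ref{eq:M04-x}) together with the degree-$5$ equations cut out the image \emph{exactly}, as a reduced subscheme, rather than merely containing it; this is the substantive input borrowed from \cite{GHP88} and is what also underlies the monomorphism property of $\prod_{|I|=4}\gamma_I$.
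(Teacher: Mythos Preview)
Your proposal is correct and matches the paper's own treatment: the paper does not give a detailed proof of this theorem but simply states that it ``follows mostly from \cite{GHP88}'' and that one should ``use lemma \ref{le:cDelta-gammaI}'' for the identification with $\overline{X}_{A_{n-1},A_3}$, which is exactly the strategy you outline. Your proposal in fact supplies more detail than the paper (the proper-monomorphism reduction, the density check for the degree-$5$ relations, and the explicit reading-off of the functor), while deferring the same substantive point---that the listed equations cut out the image scheme-theoretically---to \cite{GHP88}, just as the paper does.
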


\medskip

\begin{rem}\label{rem:M0n-functor} 
The stable $n$-pointed tree can be reconstructed from data as in (\ref{eq:M0n-functor})
over an $S$-scheme $Y$ as the curve $C\subseteq(\prod_T\P^1)_Y$, where the product is
over subsets $T\subseteq\{1,\ldots,n\}$ such that $|T|=3$, defined by the equations
\[
s^{i_1,i_2,i_3}_{i_4,0}x^{i_1,i_2,i_3}_1x^{i_1,i_2,i_4}_0
=s^{i_1,i_2,i_3}_{i_4,1}x^{i_1,i_2,i_3}_0x^{i_1,i_2,i_4}_1,
\] 
where $x^{i_1,i_2,i_3}_0,x^{i_1,i_2,i_3}_1$ are the homogeneous coordinates 
of the factor corresponding to\linebreak $T=\{i_1,i_2,i_3\}$ and related under 
permutations of elements of $T$ by equations as the first two in (\ref{eq:M04-x}) 
leaving out the lower index, and the sections are given as\linebreak
$s_k=(s^T_{k,0}:s^T_{k,1})_T\colon Y\to C\subseteq(\prod_T\P^1)_Y$,
where $(s^T_{k,0}:s^T_{k,1})=(s^{i_1,i_2,i_3}_{k,0}:s^{i_1,i_2,i_3}_{k,1})$ for 
$T=\{i_1,i_2,i_3\}$ with respect to the coordinates $x^{i_1,i_2,i_3}_0,x^{i_1,i_2,i_3}_1$.
This follows easily from theorem \ref{thm:M0n-ProdP1-functor} and the fact that
$\overline{M}_{0,n+1}\to\overline{M}_{0,n}$ forms the universal $n$-pointed tree over 
$\overline{M}_{0,n}$. One may also compare this to proposition \ref{prop:curveP1P1}.
\end{rem}

\subsection{Losev-Manin and Grothendieck-Knudsen moduli spaces as limits of moduli spaces of quiver representations}

\smallskip

We relate moduli spaces of representations of $P_n$ 
to the Losev-Manin moduli spaces $\overline{L}_n$.

\medskip

For $i\in\{1,\ldots,n\}$ we define the weight 
$\theta^i\in\Delta^1\times\Delta^{n-1}\subset H(P_n)$ by 
$\eta^i_1,\eta^i_2=-\frac{1}{2}$, $\theta^i_i=1-\varepsilon$ and 
$\theta^i_j=\frac{\varepsilon}{n-1}$ for $j\neq i$, where $1\gg\varepsilon>0$.
The weights $\theta^i$ are generic and any free representation $V^{\theta^i}$ that is 
$\theta^i$-stable has the property that, considered as tuple $(s^{\theta^i}_j)_j$
of sections of $\P^1$, $s^{\theta^i}_i$ does not meet $(0:1)$ and $(1:0)$. 
The moduli spaces $\mathcal M^{\theta^i}(P_n)$ are isomorphic to $(\P^1)^{n-1}$.

\begin{thm}\label{thm:Ln-limPn}
There is an isomorphism 
\[\overline{L}_n\;\,\cong\;\;\varprojlim_\theta\,\mathcal M^\theta(P_n).\]
\end{thm}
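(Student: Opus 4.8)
The strategy is to compare the functor of points of $\overline{L}_n$, as described in theorem \ref{thm:Ln-ProdP1-functor} by families of sections $(s^i_j)_{i,j}$ of $\P^1$ satisfying $s^i_{i,0}=s^i_{i,1}$ and the $A_2$-relations $s^j_{i,0}s^i_{k,0}s^j_{k,1}=s^j_{i,1}s^i_{k,1}s^j_{k,0}$, with the limit functor $\Lim(P_n)$ of proposition \ref{prop:functorinvlimQnPn}, which consists of families $(\phi_{s^\theta})_\theta$ in $\prod_{\theta\text{ generic}}\mathcal M^\theta(P_n)(Y)$ satisfying equations (\ref{eq:theta-theta'}). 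First I would invoke remark \ref{rem:restr-eq-theta-theta'} to reduce the huge family $(s^\theta)_\theta$ to the subfamily indexed by the special weights $\theta^i$, $i\in\{1,\ldots,n\}$. The point is that each $\theta^i$ corresponds, via corollary \ref{cor:reprPn-P1n-GIT}, to the requirement that the three sections $s_0=(0:1)$, $s_\infty=(1:0)$, $s_i$ be pairwise disjoint, so normalising a $\theta^i$-stable representation with respect to these three sections (sending $s_i$ to $(1:1)$, which we are free to do since $\mathcal M^{\theta^i}(P_n)\cong(\P^1)^{n-1}$) turns $s^{\theta^i}$ into exactly a tuple $(s^i_j)_{j\neq i}$ of sections of $\P^1_Y$, together with the convention $s^i_i=(1:1)$. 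Thus the product $\prod_i\mathcal M^{\theta^i}(P_n)(Y)$ is literally the set of families of sections $(s^i_j)_{i,j}$ with $s^i_{i,0}=s^i_{i,1}$.

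The heart of the argument is then twofold. First I must check that the sets $\{y\mid s^\theta_i(y)\neq s^\theta_j(y),\ s^{\theta'}_i(y)\neq s^{\theta'}_j(y)\}$ for the choices needed in remark \ref{rem:restr-eq-theta-theta'} do cover $Y$, i.e.\ that restricting to the weights $\theta^i$ loses no information: any two chambers are joined by a path of codimension-one walls, and the walls of $\Delta^1\times\Delta^{n-1}$ separating $\Theta(s(y))$-polytopes are all of the form $W_J$; I would argue that for each such wall the relevant cross-ratio functions are already captured by comparing suitable $\theta^i$'s, using that $\theta^i$ lies in the chamber where $s_i$ is "generic" and $s_0,s_\infty$ are separated. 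Second, and this is where the two descriptions of equations must be matched: the equation (\ref{eq:theta-theta'}) relating $s^{\theta^i}$ and $s^{\theta^j}$ with the preferred indices $i=0$, $j=\infty$ reads, after normalisation, precisely as the $A_2$-type relation among the indices $\{0,\infty,i,j\}$-flavoured data; more carefully, the relation $s^j_{i,0}s^i_{k,0}s^j_{k,1}=s^j_{i,1}s^i_{k,1}s^j_{k,0}$ in (\ref{eq:Ln-functor}) is exactly the cross-ratio identity $f^{i,j}_{k,l}$-type equation of lemma \ref{le:fijlk} transported through the normalisation, so that the two systems of equations cut out the same subfunctor of $\prod_i(\P^1_Y)^{n-1}$. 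I would make this precise by writing the base-change matrix in $\mathrm{PGL}(2)$ (equivalently, a torus element together with the fixed sections $s_0,s_\infty$) that relates the $\theta^i$-normalisation to the $\theta^j$-normalisation, and checking the identity term by term, as in the proof of lemma \ref{le:eq-invlimit}.

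The main obstacle I anticipate is the bookkeeping in the reduction step: showing that the equations among the reduced family $(s^{\theta^i})_i$ imply \emph{all} the equations (\ref{eq:theta-theta'}) for the full family $(s^\theta)_\theta$, and conversely that a solution of the $\overline{L}_n$-equations extends to a genuine point of $\Lim(P_n)$. For the forward direction one uses proposition \ref{prop:curveP1P1} together with remark \ref{rem:restr-eq-theta-theta'} to propagate the relations across all walls; for the converse one must reconstruct, from $(s^i_j)_{i,j}$, the representation $s^\theta$ for an arbitrary generic $\theta$ and verify it is $\theta$-stable and compatible — here one builds the stable $n$-pointed chain of $\P^1$ as the curve $C\subset(\P^1)^2_Y$ of proposition \ref{prop:curveP1P1}(b), reads off $s^\theta$ from $C$, and checks via lemma \ref{lemma:repr-pol-Pn} that its stability polytope contains $\theta$. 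Once both inclusions of subfunctors are established, the chain of isomorphisms $\overline{L}_n\cong$ (functor (\ref{eq:Ln-functor})) $\cong\Lim(P_n)\cong\varprojlim_\theta\mathcal M^\theta(P_n)$ gives the theorem; representability on both ends is already known (theorem \ref{thm:Ln-ProdP1-functor} and proposition \ref{prop:functorinvlim}), so the isomorphism of functors suffices.
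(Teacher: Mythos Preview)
Your plan is essentially the paper's own: reduce $\Lim(P_n)$ to the subfamily indexed by the special weights $\theta^i$, normalise so that $s^{\theta^i}_i=(1:1)$, and identify the resulting equations with the $A_2$-relations of theorem~\ref{thm:Ln-ProdP1-functor}. Two small points of alignment are worth making. First, remark~\ref{rem:restr-eq-theta-theta'} is not what reduces the \emph{index set} of weights to $\{\theta^i\}_i$; it only lets you cut down the set of equations relating a \emph{fixed} pair $s^{\theta^i},s^{\theta^j}$ (the paper uses it to restrict to $l=i$, yielding your $A_2$-equation). The reduction of the index set is a separate step, and the paper handles it by a direct polytope-covering argument: for any geometric point $y$ the polytopes $\Theta(s^{\theta^i}(y))$ cover $\Delta^1\times\Delta^{n-1}$, because any putative leftover region is of the form $P(J,J')$ with $J\cap J'=\emptyset$ and $J\cup J'\subsetneq\{1,\ldots,n\}$, hence contains some $\theta^i$. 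This is quicker and cleaner than reconstructing $s^\theta$ from a curve $C\subset(\P^1)^2_Y$ as you sketch in your converse; once the covering is known, the extension of $(s^{\theta^i})_i$ to $(s^\theta)_\theta$ is automatic and uniqueness follows from the same covering. Second, the equivalence $\interior(\Theta(s^\theta(y))\cap\Theta(s^{\theta'}(y)))\neq\emptyset\Leftrightarrow s^\theta\cong s^{\theta'}$ near $y$, which you need for the reduction to be lossless, is not purely formal: the paper derives it from the equations~(\ref{eq:theta-theta'}) via proposition~\ref{prop:curveP1P1}, so that it holds already for elements of the reduced functor $\Lim'(P_n)$, not only for elements of $\Lim(P_n)$.
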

\begin{proof}
By proposition \ref{prop:functorinvlimQnPn} 
$\varprojlim_\theta\mathcal M^\theta(P_n)\cong\Lim(P_n)$.
Consider the functor $\Lim'(P_n)$ defined by
\[\textstyle
Y\;\mapsto\;
\left\{
(\phi_{V^{\theta^i}})_i\in\prod_i\mathcal M^{\theta^i}(P_n)(Y)
\left|\,
\forall\,i,j,k,l\colon
s^{\theta^i}_{k,0}s^{\theta^i}_{l,1}s^{\theta^j}_{k,1}s^{\theta^j}_{l,0}
=s^{\theta^i}_{k,1}s^{\theta^i}_{l,0}s^{\theta^j}_{k,0}s^{\theta^j}_{l,1}
\right.\right\}
\]
where $\phi_{V^{\theta^i}}\colon Y\to\mathcal M^\theta(Q)$ is the morphism determined by a
$\theta^i$-stable representation $V^{\theta^i}$ over $Y$, and the representations
$V^{\theta^i}$ are considered as tuples of sections $(s^{\theta^i}_k)_k$ of $\P^1_Y\to Y$.

We claim that the natural morphism of functors $\Lim(P_n)\to\Lim'(P_n)$
that arises by restricting elements and relations is an isomorphism. 
The property 
\[
\interior\big(\Theta(V^{\theta}(y))\cap \Theta(V^{\theta'}(y))\big)\neq\emptyset
\;\Longleftrightarrow\;V^\theta\cong V^{\theta'}\;\textit{in a neighbourhood of $y$}
\]
of $(V^\theta)_\theta\in\Lim(P_n)(Y)$ follows from the equations relating 
$V^\theta,V^{\theta'}$ and thus also holds for elements of $\Lim'(P_n)(Y)$:
by proposition \ref{prop:curveP1P1}.(c) we have an isomorphism $V^\theta\cong V^{\theta'}$ 
on an open set, and for $y$ in its complement there are $J,J'$ such that 
$(s^\theta_j(y))_{j\in J}$ coincide, $(s^{\theta'}_j(y))_{j\in J'}$ coincide
and $(J\cup J')^\complement=\emptyset$, which implies that 
$\Theta(V^{\theta}(y))$ and $\Theta(V^{\theta'}(y))$ are separated by a wall.
By this property for elements of $\Lim(P_n)(Y)$ (resp.\ $\Lim'(P_n)(Y)$) the polytopes
$\Theta(V^\theta(y))$ (resp.\ $\Theta(V^{\theta^i}(y))$) have either disjoint 
interiors or coincide.
We construct the inverse morphism by showing that there is a unique 
way to extend an element $(V^{\theta^i})_i\in\Lim'(P_n)(Y)$ to an element 
$(V^\theta)_\theta\in\Lim(P_n)(Y)$. Such a $(V^\theta)_\theta\in\Lim(P_n)(Y)$ 
is uniquely determined by its restriction to $\Lim'(P_n)(Y)$
because for any generic $\theta$ we have 
$\bigcup_i\{y\:|\:\textit{$V^\theta(y)$ $\theta^i$-stable}\}=Y$.
On the other hand we can extend a given $(V^{\theta^i})_i\in\Lim'(P_n)(Y)$
to an element of $\Lim(P_n)(Y)$ if for any generic $\theta$ we have 
$\bigcup_i\{y\:|\:\textit{$V^{\theta^i}(y)$ $\theta$-stable}\}=Y$ or
equivalently for all $y\in Y$ the polytopes $\Theta(V^{\theta^i}(y))$ cover 
$\Delta^1\times\Delta^{n-1}$.
The fibres $(V^{\theta^i}(y))_i$ over a geometric point $y\in Y$ define a 
decomposition of $\Delta^1\times\Delta^{n-1}$ into polytopes:
the polytopes $\Theta(V^{\theta^i}(y))$ and possibly remaining parts. 
By remark \ref{rem:Pn-weightspace} and lemma \ref{lemma:repr-pol-Pn} 
each of the remaining parts is of the form 
\[\textstyle
P(J,J')=\{\theta\in\Delta^1\times\Delta^{n-1}\:|\:\sum_{i\in J}\theta_i\leq-\eta_1\}
\cap\{\theta\in\Delta^1\times\Delta^{n-1}\:|\:\sum_{i\in J'}\theta_i\leq-\eta_2\},
\]
i.e.\ bounded by walls $W_J$ and $W_{(J')^\complement}$,
for some $J,J'$ such that $J\subsetneq(J')^\complement$. 
But any such $P(J,J')$ contains $\theta^i$ for $i\in\{1,\ldots,n\}\setminus(J\cup J')$.
So already the polytopes $\Theta(V^{\theta^i}(y))$ cover $\Delta^1\times\Delta^{n-1}$. 
Thus we have shown $\Lim(P_n)\cong\Lim'(P_n)$.

By remark \ref{rem:restr-eq-theta-theta'} we can restrict the equations 
$s^{\theta^i}_{k,0}s^{\theta^i}_{l,1}s^{\theta^j}_{k,1}s^{\theta^j}_{l,0}
=s^{\theta^i}_{k,1}s^{\theta^i}_{l,0}s^{\theta^j}_{k,0}s^{\theta^j}_{l,1}$
in the functor $\Lim'(P_n)$ relating $V^{\theta^i},V^{\theta^j}$ to equations 
with $l=i$. 
Thus $\Lim'(P_n)$ is isomorphic to the functor $\Lim''(P_n)$ defined by 
\[\textstyle
Y\;\mapsto\;
\left\{\,
(\phi_{V^{\theta^i}})_i\in\prod_i\mathcal M^{\theta^i}(P_n)(Y)
\,\left|
\begin{array}{l}
\forall\,i,j,k\colon
s^{\theta^i}_{k,0}s^{\theta^i}_{i,1}s^{\theta^j}_{k,1}s^{\theta^j}_{i,0}
=s^{\theta^i}_{k,1}s^{\theta^i}_{i,0}s^{\theta^j}_{k,0}s^{\theta^j}_{i,1}
\end{array}\!
\right.\right\}
\]

Each $\phi_{V^{\theta^i}}$ is given by a family $(s^{\theta^i}_j)_{j=1,\ldots,n}$ of 
sections $s^{\theta^i}_j\colon Y\to\P^1_Y$ up to operation of $\G_m$ on $\P^1_Y$ 
fixing $(0:1),(1:0)$.
Within its isomorphism class we can choose this family such that $s_i^{\theta^i}=(1:1)$, 
then we denote $s^i_j=s^{\theta^i}_j$. The functor $\Lim''(P_n)$ is isomorphic to 
\[\textstyle
Y\;\mapsto\;
\left\{
\begin{array}{l}
\textit{families $(s^i_j)_{i,j\in\{1,\ldots,n\}}$}\\
\textit{of sections $s^i_j\colon Y\to\P^1_Y$}\\
\end{array}
\left|
\begin{array}{l}
\forall\,i\colon s^i_{i,0}=s^i_{i,1}\\
\forall\,i,j,k\colon s^i_{k,0}s^j_{k,1}s^j_{i,0}=s^i_{k,1}s^j_{k,0}s^j_{i,1}
\end{array}
\right.\right\}
\]
This functor is the same as the functor (\ref{eq:Ln-functor}) in 
theorem \ref{thm:Ln-ProdP1-functor}.
\end{proof}

In the same way we study the relation of moduli spaces of representations 
of $Q_n$ to the Grothendieck-Knudsen moduli spaces $\overline{M}_{0,n}$.

\medskip

For $T\subseteq\{1,\ldots,n\}$, $|T|=3$ we define the weight $\theta^T$ 
for the quiver $Q_n$ as $\theta^T_i=\frac{2}{3}(1-\varepsilon)$ for 
$i\in T$ and $\theta^T_i=\frac{2}{n-3}\varepsilon$ for $i\not\in T$, 
where $1\gg\varepsilon>0$.
The weights $\theta^T$ are generic and for any $\theta^T$-stable free representation 
$V^{\theta^T}\!$, considered as a tuple $(s^{\theta^T}_j)_j$ of sections of $\P^1$,
the sections $(s^{\theta^T}_i)_{i\in T}$ are disjoint.
The moduli spaces $\mathcal M^{\theta^T}(Q_n)$ are isomorphic to $(\P^1)^{n-3}$.

\begin{thm}\label{thm:M0n-limQn}
There is an isomorphism
\[\overline{M}_{0,n}\;\cong\;\;\varprojlim_\theta\,\mathcal M^\theta(Q_n).\]
\end{thm}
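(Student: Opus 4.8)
The plan is to follow verbatim the strategy of the proof of theorem \ref{thm:Ln-limPn}, replacing the quiver $P_n$ by $Q_n$, the product of simplices $\Delta^1\times\Delta^{n-1}$ by the hypersimplex $\Delta(2,n)$, the special weights $\theta^i$ by the weights $\theta^T$ for $3$-element subsets $T\subseteq\{1,\ldots,n\}$, and the functor description of $\overline{L}_n$ in theorem \ref{thm:Ln-ProdP1-functor} by the functor description of $\overline{M}_{0,n}$ in theorem \ref{thm:M0n-ProdP1-functor}. By proposition \ref{prop:functorinvlimQnPn} we have $\varprojlim_\theta\mathcal M^\theta(Q_n)\cong\Lim(Q_n)$, the functor of families $(s^\theta)_\theta$ of $\theta$-stable representations (generic $\theta$) satisfying equations (\ref{eq:theta-theta'}). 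The first step is to introduce $\Lim'(Q_n)$, the functor of families $(s^{\theta^T})_T$ indexed only by the $3$-subsets $T$, subject to the restriction of equations (\ref{eq:theta-theta'}) between pairs $\theta^T,\theta^{T'}$, and to show $\Lim(Q_n)\cong\Lim'(Q_n)$.

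For this isomorphism I would argue exactly as in theorem \ref{thm:Ln-limPn}. One direction is restriction. For the inverse I must show: (i) a family in $\Lim'(Q_n)(Y)$ extends uniquely to a family in $\Lim(Q_n)(Y)$, which reduces to showing that for every generic $\theta$ and every geometric point $y\in Y$ the polytope $\Theta(s^\theta(y))$ of lemma \ref{lemma:repr-pol-Qn} meets the interior of one of the polytopes $\Theta(s^{\theta^T}(y))$ — equivalently that the $\Theta(s^{\theta^T}(y))$ cover $\Delta(2,n)$; and (ii) that an element of $\Lim'(Q_n)(Y)$ automatically satisfies the property $\interior(\Theta(s^\theta(y))\cap\Theta(s^{\theta'}(y)))\neq\emptyset\Leftrightarrow s^\theta\cong s^{\theta'}$ near $y$, which follows from proposition \ref{prop:curveP1P1} together with lemma \ref{lemma:repr-pol-Qn}(iii). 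For the covering (i): by lemma \ref{lemma:repr-pol-Qn} the fibres $(s^{\theta^T}(y))_T$ cut $\Delta(2,n)$ into the polytopes $\Theta(s^{\theta^T}(y))$ plus possibly leftover cells, and by lemma \ref{lemma:repr-pol-Qn}(i)--(ii) each leftover cell is a ``matroid polytope'' $\bigcap_l\{\sum_{i\in J_l}\theta_i\le 1\}$ for some partition $\{1,\ldots,n\}=\bigsqcup_l J_l$ with at least three parts; such a polytope contains $\theta^T$ whenever $T$ is a transversal of three of the parts $J_l$, so already the $\Theta(s^{\theta^T}(y))$ cover $\Delta(2,n)$. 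This covering argument, and the correct bookkeeping of which partitions arise, is the step I expect to be the main obstacle.

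Once $\Lim(Q_n)\cong\Lim'(Q_n)$ is established, the remaining steps are formal. Using remark \ref{rem:restr-eq-theta-theta'} I restrict, for each pair $T=\{i_1,i_2,i_3\}$, $T'$, the equations (\ref{eq:theta-theta'}) between $s^{\theta^T}$ and $s^{\theta^{T'}}$ to those indexed by $k,l$ lying in $T$, obtaining an intermediate functor $\Lim''(Q_n)$ (the analogue of $\Lim''(P_n)$); here one uses that for a $\theta^T$-stable representation the sections $(s^{\theta^T}_i)_{i\in T}$ are pairwise disjoint, so the cross-ratio coordinates of corollary \ref{cor:reprQn-P1n-GIT} / the discussion before lemma \ref{le:fijlk} are available and the restricted equations still imply all of (\ref{eq:theta-theta'}) between $s^{\theta^T}$ and $s^{\theta^{T'}}$ by the two-index reduction of proposition \ref{prop:curveP1P1}. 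Finally, within each isomorphism class I normalise $s^{\theta^T}$ by choosing the unique representative with $s^{\theta^T}_{i_1}=(0:1)$, $s^{\theta^T}_{i_2}=(1:0)$, $s^{\theta^T}_{i_3}=(1:1)$ (using $\mathcal M^{\theta^T}(Q_n)\cong(\P^1)^{n-3}$ via corollary \ref{cor:reprQn-P1n-GIT}), and write $s^{T}_{k}=s^{i_1,i_2,i_3}_{k}$. The relations between the coordinates attached to different orderings of the same $T$ become equations (\ref{eq:M04-x}); the relations between $s^{\theta^T}$ and $s^{\theta^{T'}}$ for $T,T'$ differing in one element become equations (\ref{eq:M04-s3}), (\ref{eq:M04-s4}) and the $5$-element equations of theorem \ref{thm:M0n-ProdP1-functor}; and equations between more distant $T,T'$ are consequences of these, again by the chaining argument of lemma \ref{le:eq-invlimit} / proposition \ref{prop:curveP1P1}. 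Thus $\Lim''(Q_n)$ is isomorphic to the functor (\ref{eq:M0n-functor}), which by theorem \ref{thm:M0n-ProdP1-functor} is the functor of points of $\overline{M}_{0,n}$, giving $\overline{M}_{0,n}\cong\varprojlim_\theta\mathcal M^\theta(Q_n)$.
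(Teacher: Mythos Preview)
Your proposal follows the same overall architecture as the paper's proof: reduce $\Lim(Q_n)$ to $\Lim'(Q_n)$ indexed by the weights $\theta^T$, then to a functor $\Lim''(Q_n)$ with a short explicit list of equations, then identify this with (\ref{eq:M0n-functor}). The covering argument for $\Lim(Q_n)\cong\Lim'(Q_n)$ is the same idea as in the paper (any full-dimensional leftover cell contains some $\theta^T$).

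Two places deserve more care than you give them. First, the reduction to pairs $T,T'$ with $|T\cap T'|=2$ is not just ``the chaining argument of lemma \ref{le:eq-invlimit}''. The chaining principle in that lemma lets you pass from $s^{\theta^T}$ to $s^{\theta^{T''}}$ via an intermediate $s^{\theta^{T'}}$, but only when $\Theta(s^{\theta^{T'}}(y))$ sits between the other two polytopes in the right way; you still have to exhibit, for arbitrary $T,T''$ and $y$, an explicit chain of $3$-sets each overlapping the next in two elements and with the corresponding polytopes suitably adjacent. The paper does this by a concrete case analysis (reducing first to the case that $\Theta(s^{\theta^T}(y))$ and $\Theta(s^{\theta^{T''}}(y))$ meet along a single wall, then constructing $T'$ explicitly from the wall data). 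Your sketch skips this construction.

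Second, your index restriction ``$k,l$ lying in $T$'' is not what remark \ref{rem:restr-eq-theta-theta'} allows and would yield only trivial equations (with $\{i,j\}\subset T$ already fixed, forcing $k=l=i_3$). The correct move, once $|T\cap T'|=2$ and $\{i,j\}=T\cap T'$, is to restrict a \emph{single} index: the paper takes $k=i_4\in T'\setminus T$ (equivalently one could take $l=i_3\in T\setminus T'$), leaving the other index free. This is what produces the two families of equations in $\Lim''(Q_n)$, one for $l=i_3$ (giving (\ref{eq:M04-s4})) and one for $l=i_5\notin T\cup T'$ (giving the $5$-element equation). With these two points fixed, your outline matches the paper's proof.
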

\begin{proof}
By proposition \ref{prop:functorinvlimQnPn} 
$\varprojlim_\theta\mathcal M^\theta(Q_n)\cong\Lim(Q_n)$.
Consider the functor $\Lim'(Q_n)$ defined by
\[\textstyle
Y\;\mapsto\;
\left\{\left.
(\phi_{V^{\theta^T}})_T\in\prod_T\mathcal M^{\theta^T}(Q_n)(Y)
\:\right|
\textit{equations {\rm(\ref{eq:theta-theta'})} hold}\;
\right\}
\]
where $\phi_{V^{\theta^T}}\colon Y\to\mathcal M^\theta(Q)$ is the morphism determined by a
$\theta^T$-stable representation $V^{\theta^T}$ over $Y$, and the representations
$V^{\theta^T}$ are considered as tuples of sections $(s^{\theta^T}_k)_k$ of $\P^1_Y\to Y$.

In the same way as in the proof in the case of $P_n$ we show that the 
natural morphism $\Lim(Q_n)\to\Lim'(Q_n)$ is an isomorphism by showing that 
$\bigcup_T\{y\:|\:\textit{$V^{\theta^T}\!(y)$ $\theta$-stable}\}=Y$ for all
$(V^{\theta^T})_T$ defining an element of $\Lim'(Q_n)(Y)$ and all generic $\theta$.
For such $(V^{\theta^T})_T$ and fixed $y\in Y$ again the polytopes 
$\Theta(V^{\theta^T}\!(y))$ either coincide or their interiors are disjoint, 
and $\Delta(2,n)$ decomposes into the polytopes $\Theta(V^{\theta^T}\!(y))$ 
and remaining parts.
Using remark \ref{rem:Qn-weightspace} and lemma \ref{lemma:repr-pol-Qn} we see that
the remaining parts consist of polytopes bounded by walls $W_{\{J_l,J_l^\complement\}}$
which do not intersect in the interior of $\Delta(2,n)$ and thus are of the form
$\bigcap_l\{\theta\in\Delta(2,n)\:|\:\sum_{j\in J_l}\theta_j\leq 1\}$ for 
disjoint subsets $J_l\subset\{1,\ldots,n\}$, $2\leq|J_l|\leq n-2$. 
Any such subset which is full-dimensional contains a $\theta^T$, 
thus $\bigcup_T\Theta(V^{\theta^T}\!(y))=\Delta(2,n)$ for fixed $y\in Y$, 
which is equivalent to the above statement.

In the functor $\Lim'(Q_n)$ the statement that equations \ref{eq:theta-theta'} 
hold means that for all $T,T'$ and for all $i,j$ locally over 
$\big\{y\:|\:s^{\theta^T}_i\!(y)\neq s^{\theta^T}_j\!(y),
s^{\theta^{T'}}_i\!\!(y)\neq s^{\theta^{T'}}_j\!\!(y)\big\}\subseteq Y$, 
after a choice of coordinates such that $s^{\theta^T}_i\!\!,s^{\theta^{T'}}_i\!\!=(0:1)$
and $s^{\theta^T}_j\!\!,s^{\theta^{T'}}_j\!\!=(1:0)$, for all $k,l$ the equations
\begin{equation}\label{eq:thetaT-thetaT'}
s^{\theta^T}_{k,0}s^{\theta^T}_{l,1}s^{\theta^{T'}}_{k,1}s^{\theta^{T'}}_{l,0}
=s^{\theta^T}_{k,1}s^{\theta^T}_{l,0}s^{\theta^{T'}}_{k,0}s^{\theta^{T'}}_{l,1}
\end{equation}
hold. 

By the last part of the proof of lemma \ref{le:eq-invlimit}, if the union of the walls 
separating $\Theta(V^{\theta^T}\!(y)),\Theta(V^{\theta^{T'}}\!\!(y))$ and 
$\Theta(V^{\theta^{T'}}\!\!(y)),\Theta(V^{\theta^{T''}}\!\!(y))$ are exactly 
the walls that separate\linebreak $\Theta(V^{\theta^T}\!(y)),\Theta(V^{\theta^{T''}}\!\!(y))$ 
and if $V^{\theta^T}\!\!,V^{\theta^{{T'}}}\!\!$ and 
$V^{\theta^{{T'}}}\!\!\!,V^{\theta^{T''}}\!\!$ in a neighbourhood of $y$ 
are related by equations of the form (\ref{eq:thetaT-thetaT'}), 
then so are $V^{\theta^T}\!\!,V^{\theta^{T''}}\!\!\!$. 
It follows that we can restrict to equations (\ref{eq:thetaT-thetaT'}) for which 
$\Theta(V^{\theta^T}\!(y)),\Theta(V^{\theta^{T'}}\!\!(y))$ either coincide or 
are separated by a wall 
$W_{\{J,J^\complement\}}=\Theta(V^{\theta^T}\!(y))\cap \Theta(V^{\theta^{T'}}\!\!(y))$.

We claim that we can restrict this set of equations further to the set of equations 
for $T,T'$ with $|T\cap T'|=2$. 
Given $T,T'$ with $|T\cap T'|<2$ we show that we can replace $T$ by $\tilde{T}$ 
(and similarly $T'$ by $\tilde{T}'$), possibly multiple times, such that 
in each step $\theta^{\tilde{T}}\!\in \Theta(V^{\theta^T}\!(y))$, 
$|T\cap\tilde{T}|=2$ and finally $|\tilde{T}\cap T'|=2$.
As the case that $\Theta(V^{\theta^T}\!(y))$ and $\Theta(V^{\theta^{T'}}\!\!(y))$ 
coincide is trivial, we can assume 
$\Theta(V^{\theta^T}\!(y))\cap \Theta(V^{\theta^{T'}}\!\!(y))=W_{\{J,J^\complement\}}$ 
for some $J$, so that $(s^{\theta^T}_i\!(y))_{i\in J^\complement}$ coincide and
$(s^{\theta^{T'}}_j\!\!(y))_{j\in J}$ coincide.
If $T\subset J$ then there is $\tilde T$ such that 
$\theta^{\tilde T}\!\!\in \Theta(V^{\theta^T}\!(y))$, 
$|T\cap\tilde T|=2$, $\tilde T\cap J^\complement\neq\emptyset$, thus we can assume
that both $T$ and $T'$ have nonempty intersection with $J$ and $J^\complement$.
If $T\cap T'=\emptyset$ there exists $\tilde{T}$ such that 
$\theta^{\tilde{T}}\!\in \Theta(V^{\theta^T}\!(y))$, $|T\cap\tilde{T}|=2$, 
$|\tilde{T}\cap T'|=1$: to define $\tilde{T}=\{i_1,i_2,i_3\}$ let 
$i_1\in T'\cap J^\complement$ and then $i_2,i_3\in T$ such that 
$s^{\theta^T}_{i_1}\!(y)\neq s^{\theta^T}_{i_2}\!(y),s^{\theta^T}_{i_3}\!(y)$. 
If $|T\cap T'|=1$ there is $\tilde{T}$ such that $\theta^{\tilde{T}}\!\in
\Theta(V^{\theta^T}\!(y))$, $|T\cap\tilde{T}|=2=|\tilde{T}\cap T'|$: 
to define $\tilde{T}=\{i_1,i_2,i_3\}$ let $\{i_1\}=T\cap T'$, 
then $i_2\in T'\cap J$ if $i_1\in J^\complement$ and 
$i_2\in T'\cap J^\complement$ if $i_1\in J$, and then $i_3\in T$ such that 
$s^{\theta^T}_{i_3}\!(y)\neq s^{\theta^T}_{i_1}\!(y),s^{\theta^T}_{i_2}\!(y)$.

Having shown that in the functor $\Lim'(Q_n)$ the equations (\ref{eq:thetaT-thetaT'})
with $|T\cap T'|=2$ are sufficient, consider $T=\{i_1,i_2,i_3\}$, $T'=\{i_1,i_2,i_4\}$.
By remark \ref{rem:restr-eq-theta-theta'} we can restrict to the equations with 
$i=i_1$, $j=i_2$, further the equations with $k=i_4$ suffice, and we write $i_5$ for $l$. 
There are two cases $i_3=i_5$ and $i_3\neq i_5$ giving rise to two sets of equations. 
This shows that $\Lim'(Q_n)$ is isomorphic to the functor $\Lim''(Q_n)$ defined by 
\[\textstyle
Y\mapsto\left\{\!
(\phi_{V^{\theta^T}})_T
\!\left|\!
\begin{array}{l}
\forall\,i_1,i_2,i_3,i_4\;\textit{pairwise distinct}\colon\\
s^{\{i_1,i_2,i_3\}}_{i_4,0}\!s^{\{i_1,i_2,i_3\}}_{i_3,1}\!
s^{\{i_1,i_2,i_4\}}_{i_4,1}\!s^{\{i_1,i_2,i_4\}}_{i_3,0}
\!\!=\!s^{\{i_1,i_2,i_3\}}_{i_4,1}\!s^{\{i_1,i_2,i_3\}}_{i_3,0}\!
s^{\{i_1,i_2,i_4\}}_{i_4,0}\!s^{\{i_1,i_2,i_4\}}_{i_3,1}\\
\forall\,i_1,i_2,i_3,i_4,i_5\;\textit{pairwise distinct}\colon\\
s^{\{i_1,i_2,i_3\}}_{i_4,0}\!s^{\{i_1,i_2,i_3\}}_{i_5,1}\!
s^{\{i_1,i_2,i_4\}}_{i_4,1}\!s^{\{i_1,i_2,i_4\}}_{i_5,0}
\!\!=\!s^{\{i_1,i_2,i_3\}}_{i_4,1}\!s^{\{i_1,i_2,i_3\}}_{i_5,0}\!
s^{\{i_1,i_2,i_4\}}_{i_4,0}\!s^{\{i_1,i_2,i_4\}}_{i_5,1}
\end{array}
\!\!\!\!
\right.\right\}
\]
where we choose coordinates such that 
$s^{\theta^{\{i_1,i_2,i_3\}}}_{i_1}\!,s^{\theta^{\{i_1,i_2,i_4\}}}_{i_1}\!=(0:1)$, 
$s^{\theta^{\{i_1,i_2,i_3\}}}_{i_2}\!,s^{\theta^{\{i_1,i_2,i_4\}}}_{i_2}\!=(1:0)$ 
and use the abbreviations of the form
$s^{\{i_1,i_2,i_3\}}\!=s^{\theta^{\{i_1,i_2,i_3\}}}$.

Each $V^{\theta^T}$ defining an element $\mathcal M^{\theta^T}(Q_n)(Y)$ consists 
of a family $(s^{\theta^T}_k)_{k=1,\ldots,n}$ of sections $s^{\theta^T}_k\colon Y\to\P^1_Y$
up to operation of $\PGL(2)$ on $\P^1_Y$.
Within its isomorphism class, if $T=\{i_1,i_2,i_3\}$, we can choose this family such that 
$s^{\theta^T}_{i_1}\!=(0:1)$, $s^{\theta^T}_{i_2}\!=(1:0)$, $s^{\theta^T}_{i_3}\!=(1:1)$, 
then we denote $s^{i_1,i_2,i_3}_{i_4}=s^{\theta^T}_{i_4}\colon Y\to\P^1_Y$. 
The $3!$ sections corresponding to $s^{\theta^T}_{i_4}$ for $T=\{i_1,i_2,i_3\}$ 
are related by equations (\ref{eq:M04-s3}).
The first set of equations of $\Lim''(Q_n)$ gives equations (\ref{eq:M04-s4}).
The second set of equations of $\Lim''(Q_n)$ gives the equations of the form
\[
s^{i_1,i_2,i_3}_{i_4,0}s^{i_1,i_2,i_3}_{i_5,1}s^{i_1,i_2,i_4}_{i_5,0}
=s^{i_1,i_2,i_3}_{i_4,1}s^{i_1,i_2,i_3}_{i_5,0}s^{i_1,i_2,i_4}_{i_5,1}
\]
The functor that associates to an $S$-scheme $Y$ families of sections 
$s^{i_1,i_2,i_3}_{i_4}\colon Y\to\P^1_Y$ for $i_1,i_2,i_3,i_4\in\{1,\ldots,n\}$,
$|\{i_1,i_2,i_3\}|=3$ that satisfy these equations is functor (\ref{eq:M0n-functor}).
\end{proof}

\section{Hassett moduli spaces and moduli spaces of quiver representations}
\label{sec:M0a}

\subsection{Hassett moduli spaces $\overline{M}_{0,a}$ of weighted pointed curves of genus zero}

The Hassett moduli spaces $\overline{M}_{g,a}$ of $a$-stable $n$-pointed curves
of genus $g$ for a weight $a$ were introduced in \cite{Ha03}. Here we consider 
the Hassett moduli spaces $\overline{M}_{0,a}$ of curves of genus $0$. The weight
$a=(a_1,\ldots,a_n)\in\Q^n$ satisfies $0<a_i\leq 1$ for all $i$ and $|a|=\sum_ia_i>2$.

\begin{defi}
An {\it $a$-stable $n$-pointed curve of genus $0$} over an algebraically closed 
field is a tuple $(C,s_1,\ldots,s_n)$ where $C$ is a complete 
connected reduced curve $C$ of genus $0$ with at most ordinary double 
points (i.e.\ a tree of $\P^1$'s), $s_1,\ldots,s_n$ are closed points of $C$ 
such that each $s_i$ is a regular point of $C$, if $(s_i)_{i\in I}$ coincide
then $\sum_{i\in I}a_i\leq 1$, and the $\Q$-divisor $K_C+a_1s_1+\ldots+a_ns_n$ 
is ample, i.e.\ for each irreducible component $C^k$ of $C$ we have 
$(\textit{number of intersection points with other components})+
\sum_{s_i\in C^k}a_i>2$.
\end{defi}

\begin{defithm} {\rm(\cite{Ha03})}.
The Hassett moduli space $\overline{M}_{0,a}$ is 
the fine moduli space of $a$-stable $n$-pointed curves of genus $0$, 
i.e.\ $\overline{M}_{0,a}$ represents the moduli functor (denoted by the same symbol)
\[ 
Y\;\mapsto\;\Big\{\textit{isomorphism classes of $a$-stable $n$-pointed curves 
of genus $0$ over $Y\,$}\Big\}
\]
where an $a$-stable $n$-pointed curve of genus $0$ over a scheme $Y$ is a 
flat proper morphism $C\!\to\!Y$ with sections 
$s_1,\ldots,s_n\colon Y\!\to C$ such that the geometric fibres 
$(C_y,s_1(y),\ldots,s_n(y))$ are $a$-stable $n$-pointed curves of genus $0$. 
\end{defithm}

For $a=(1,\ldots,1)$ the Hassett moduli space $\overline{M}_{0,a}$
coincides with the Grothendieck-Knudsen moduli space $\overline{M}_{0,n}$,
for $a=(1,1,\varepsilon,\ldots,\varepsilon)$, $0<\varepsilon\ll 1$ the moduli space
$\overline{M}_{0,a}$ coincides with the Losev-Manin moduli space $\overline{L}_{n}$
(cf.\ \cite[6.4 and 7.1]{Ha03}).

\subsection{The functor of points of $\overline{M}_{0,a}$}

We give a description of the functor of $\overline{M}_{0,a}$ similar to 
theorem \ref{thm:Ln-ProdP1-functor} and \ref{thm:M0n-ProdP1-functor} 
based on natural embeddings of $a$-stable $n$-pointed curves of genus $0$ 
in a product of $\P^1$'s.

\medskip

Let $(C\to Y,s_1,\ldots,s_n)$ be an $a$-stable $n$-pointed curve of genus $0$ over $Y$. 
Using again the methods of \cite{Kn83}, for $T\subseteq\{1,\ldots,n\}$, $|T|=3$ 
the sections $(s_i)_{i\in T}$ define a contraction morphism to a $\P^1$-bundle over $Y$.
Restricting to the open subscheme $Y^T\subseteq Y$ where $(s_i)_{i\in T}$ are 
pairwise disjoint, we obtain a trivial $\P^1$-bundle $\P^1_{Y^T}\to Y^T$ 
with sections $s^T_1,\ldots,s^T_n$ such that $(s^T_i)_{i\in T}$ are pairwise disjoint.
As in example \ref{ex:M04} we can introduce homogeneous coordinates 
$x^{i_1,i_2,i_3}_0,x^{i_1,i_2,i_3}_1$ on $\P^1_{Y^T}$ such that 
$s^T_{i_1}=(0:1)$, $s^T_{i_2}=(1:0)$, $s^T_{i_3}=(1:1)$. We write the sections 
$s^T_j$ respect to these coordinates as 
$s^{i_1,i_2,i_3}_j=(s^{i_1,i_2,i_3}_{j,0}:s^{i_1,i_2,i_3}_{j,1})$.

Under permutations of the elements of $T$ these sections are related by the 
formulae (\ref{eq:M04-s3}).
For  $I=\{i_1,i_2,i_3,i_4\}\subseteq\{1,\ldots,n\}$, $|I|=4$ and
$T,T'\subset I$, $T=\{i_1,i_2,i_3\}$, $T'=\{i_1,i_2,i_4\}$ over 
$Y^T\cap Y^{T'}$ we have the relation (\ref{eq:M04-s4}).
Let $J=\{i_1,i_2,i_3,i_4,i_5\}\subseteq\{1,\ldots,n\}$, $|J|=5$ 
and $I=\{i_1,i_2,i_3,i_4\}$, $I'=\{i_1,i_2,i_3,i_5\}$, $I''=\{i_1,i_2,i_4,i_5\}$, 
$T=\{i_1,i_2,i_3\}$, $T'=\{i_1,i_2,i_4\}$. Then over $Y^T\cap Y^{T'}$ we have the relation
\begin{equation}\label{eq:M0a-3}
s^{i_1,i_2,i_3}_{i_4,0}s^{i_1,i_2,i_3}_{i_5,1}s^{i_1,i_2,i_4}_{i_5,0}
=s^{i_1,i_2,i_3}_{i_4,1}s^{i_1,i_2,i_3}_{i_5,0}s^{i_1,i_2,i_4}_{i_5,1}
\end{equation}
because over the open set $\{y\:|\:s_{i_3}(y)\neq s_{i_4}(y)\}\subseteq Y^T\cap Y^{T'}$ 
we have the same situation as in the case $\overline{M}_{0,n}$ (see theorem 
\ref{thm:M0n-ProdP1-functor}), over 
$\{y\:|\:\textit{$s_{i_3}(y),s_{i_4}(y)$ in same component}\}\subseteq Y^T\cap Y^{T'}$
these equations correspond to usual base changes in $\P^1$.

\begin{thm}\label{thm:functorM0a}
The functor of $\overline{M}_{0,a}$ is isomorphic to the contravariant functor
\begin{equation}\label{eq:M0a-functor} 
Y\mapsto
\left\{\!\!
\begin{array}{l}
\textit{families of sections}\\
s^{i_1,i_2,i_3}_{i_4}\colon Y^T\to\P^1_{Y^T}\\
\textit{for $i_1,i_2,i_3,i_4\in\{1,\ldots,n\}$,}\\
T=\{i_1,i_2,i_3\}, |T|=3,\\
\textit{$Y^T\subseteq Y$ open}
\end{array}\!\!
\!\left|\!
\begin{array}{l}
(0)\;\forall\,T=\{i_1,i_2,i_3\}, |T|=3\colon\\
s^{i_1,i_2,i_3}_{i_1}\!\!=\!(0\!:\!1),s^{i_1,i_2,i_3}_{i_2}\!\!=\!(1\!:\!0), 
s^{i_1,i_2,i_3}_{i_3}\!\!=\!(1\!:\!1)\\
(1)\;\forall\,I=\{i_1,i_2,i_3,i_4\}, |I|=4\colon\\
\textit{equations {\rm(\ref{eq:M04-s3})} and {\rm(\ref{eq:M04-s4})} hold}\\
(2)\;\forall\,J=\{i_1,i_2,i_3,i_4,i_5\}, |J|=5\colon\\
\textit{equations {\rm(\ref{eq:M0a-3})} hold}\\
(3)\;\forall\,T\:\forall\,y\in Y^T\colon w(y,T,a)>2\\
(4)\;\forall\,y\in Y\;\forall\,a'\in\Q_{>0}^n,|a'|>2, a'\leq a\;\exists\,T\colon\\ 
y\in Y^T\!\!,\,w(y,T,a')>2\\
(5)\;\forall\,T,T'\;\forall\,y\in Y^{T'}\colon\\
\textit{$(s^{T'}_j(y))_{j\in T}$ pairwise distinct}\Rightarrow y\in Y^T
\end{array}\!\!\!\!
\right.\right\}
\end{equation}
where sets $T$ (same for $T'$) are always subsets $T\subseteq\{1,\ldots,n\}$ with $|T|=3$,
\[\textstyle
w(y,T,a)\;=\;\sum_l\,\min\big\{1,\sum_{i\in J_l(y,T)}a_i\big\}
\]
for geometric points $y\in Y^T$ and the partitions $\{1,\ldots,n\}=\bigsqcup_lJ_l(y,T)$ 
defined by $i,j\in J_l(y,T)$ for some $l$ if and only if $s_i^T(y)=s_j^T(y)$.
We write $s^T_i$ if we work with some fixed ordering of $T$ that is not 
relevant.
\end{thm}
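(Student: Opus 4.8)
The plan is to construct mutually inverse morphisms between the functor $\overline{M}_{0,a}$ and the functor $F$ defined by (\ref{eq:M0a-functor}), following the pattern already established for $\overline{L}_n$ and $\overline{M}_{0,n}$ in theorems \ref{thm:Ln-ProdP1-functor} and \ref{thm:M0n-ProdP1-functor}, but with care taken for the $a$-stability conditions. First I would define the map $\overline{M}_{0,a}\to F$: given an $a$-stable $n$-pointed curve $(C\to Y,s_1,\ldots,s_n)$, for each $T=\{i_1,i_2,i_3\}$ one uses the contraction morphism to a $\P^1$-bundle (via the methods of \cite{Kn83}, as recalled just before the theorem) and restricts to the open subscheme $Y^T\subseteq Y$ where $(s_i)_{i\in T}$ are pairwise disjoint; normalising the bundle so that $s^T_{i_1}=(0:1)$, $s^T_{i_2}=(1:0)$, $s^T_{i_3}=(1:1)$ yields the sections $s^{i_1,i_2,i_3}_{i_4}$. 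The compatibility relations (\ref{eq:M04-s3}), (\ref{eq:M04-s4}), (\ref{eq:M0a-3}) were verified in the text immediately preceding the statement, so conditions $(0)$, $(1)$, $(2)$ hold. Condition $(3)$ is exactly the $a$-stability (ampleness) condition translated through the contraction, since $w(y,T,a)$ counts, for each irreducible component of the contracted curve, the number of intersection points plus the weight $\min\{1,\sum a_i\}$ of the colliding marked points on it. Condition $(5)$ records that the locus where three chosen sections become pairwise disjoint after contraction by $T'$ is contained in $Y^T$, which follows because contracting by a set of pairwise-disjoint sections and then separating is compatible with direct contraction. Condition $(4)$ expresses that for \emph{every} admissible sub-weight $a'\le a$ with $|a'|>2$ there is some $T$ witnessing $a'$-stability at $y$; this holds because for any such $a'$ one can pick three sections on the curve contracted for $a'$ lying in pairwise distinct positions and having total $a'$-weight $>1$ on suitable components — this is essentially the statement that $\overline{M}_{0,a}$ dominates $\overline{M}_{0,a'}$.

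Next I would construct the inverse $F\to\overline{M}_{0,a}$. Given data as in (\ref{eq:M0a-functor}) over $Y$, the idea — exactly as in remark \ref{rem:M0n-functor} — is to build the curve $C$ inside $(\prod_T\P^1)_Y$, where the product runs over three-element subsets $T$ with $|T|=3$. Over each $Y^T$ one has the $\P^1$-bundle with its section data; the equations (\ref{eq:M04-s4}) and (\ref{eq:M0a-3}) are precisely the gluing equations (compare proposition \ref{prop:curveP1P1}(b), which shows that curves cut out by such cross-ratio equations in $(\P^1)^2$ glue to a flat family degenerating to chains of two $\P^1$). One then checks that these local pieces glue over the $Y^T$ using (\ref{eq:M04-s3}), and that the $Y^T$ cover $Y$: this is guaranteed by condition $(4)$ applied with $a'=a$ (or more carefully, by $(4)$ together with $(3)$), since for each $y$ there is a $T$ with $y\in Y^T$. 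The resulting $C\to Y$ is flat and proper, equipped with sections $s_k=(s^T_{k,0}:s^T_{k,1})_T$; the geometric fibres are trees of $\P^1$ because each local model is a chain of projective lines glued transversally, and conditions $(3)$ and the collision bound built into $w(y,T,a)$ force the $a$-stability (ampleness of $K_C+\sum a_is_i$ and $\sum_{i\in I}a_i\le1$ whenever $(s_i)_{i\in I}$ coincide). Condition $(5)$ ensures the construction is independent of auxiliary choices of $T$, i.e.\ that the various local contractions are mutually compatible.

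The main obstacle I expect is verifying that conditions $(3)$--$(5)$ are exactly the right conditions — that they are neither too weak nor too strong. Concretely: one must show that a family of sections satisfying $(0)$--$(2)$ together with $(3)$--$(5)$ really does assemble into a \emph{flat} family whose every geometric fibre is an $a$-stable $n$-pointed curve of genus $0$ (and conversely that every such curve gives data satisfying all five), and the delicate point is the interplay between $(3)$, which is a fibrewise ampleness/stability condition, and $(4)$, which is the covering-type condition phrased in terms of sub-weights $a'\le a$. One needs a combinatorial lemma, parallel to lemma \ref{lemma:repr-pol-Qn} and the polytope arguments in the proofs of theorems \ref{thm:Ln-limPn} and \ref{thm:M0n-limQn}, describing exactly which three-element sets $T$ can contract a given $a$-stable tree to a $\P^1$-bundle with pairwise disjoint images, and showing that such $T$ exist and that the resulting open sets $Y^T$ cover. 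Once this combinatorial input is in place, the verification that the two constructions are mutually inverse is routine, following the template of remark \ref{rem:M0n-functor} and the arguments already given for $\overline{M}_{0,n}$: one checks the composite $\overline{M}_{0,a}\to F\to\overline{M}_{0,a}$ recovers a curve canonically isomorphic to the original (using that $\overline{M}_{0,n+1}\to\overline{M}_{0,n}$-type universal family arguments adapt to the Hassett setting via \cite{Ha03}), and that $F\to\overline{M}_{0,a}\to F$ is the identity on section data by the normalisation in $(0)$.
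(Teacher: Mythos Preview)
Your plan matches the paper's proof in overall structure: build the forward map by contracting along each triple $T$, build the inverse by cutting $C\subseteq\prod_T\P^1_{Y^T}$ out with the cross-ratio equations and then reducing locally to the $\overline{M}_{0,n}$ description of remark~\ref{rem:M0n-functor}, and finally verify $a$-stability of the fibres. Two places deserve correction or sharpening.

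For condition~$(4)$ in the forward direction your explanation is too vague. The paper's argument is concrete: given $a'\le a$ with $|a'|>2$, each node $p$ of the fibre $C_y$ separates the tree into two pieces, at least one of which carries $a'$-weight $>1$; call that piece $C^p$ (or set $C^p=C$ if both qualify). The intersection $\bigcap_p C^p$ is nonempty, and any $T$ picking out an irreducible component of this intersection satisfies $y\in Y^T$ and $w(y,T,a')>2$.

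More substantively, in the inverse direction you say that ``conditions~$(3)$ and the collision bound built into $w(y,T,a)$'' force $a$-stability. This misidentifies the source of the collision bound. Condition~$(3)$ alone does \emph{not} imply $\sum_{i\in J}a_i\le1$ when the $(s_i)_{i\in J}$ coincide: a block with $a$-weight exceeding~$1$ simply contributes~$1$ to $w(y,T,a)$, which is still compatible with $w(y,T,a)>2$. The collision bound is extracted from condition~$(4)$ by the following trick: if $(s_j(y))_{j\in J}$ coincide and $a(J)>1$, set $a'_j=a_j$ for $j\in J$ and choose the remaining $a'_i\le a_i$ positive with $\sum_{i\notin J}a'_i=\max\{0,2-a(J)\}+\varepsilon$ for small enough $\varepsilon>0$; then $|a'|>2$, yet for every $T$ with $y\in Y^T$ the block containing $J$ contributes at most~$1$ and the rest contribute at most $\sum_{i\notin J}a'_i$, so $w(y,T,a')\le 1+\max\{0,2-a(J)\}+\varepsilon<2$, contradicting~$(4)$. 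So the ``interplay between $(3)$ and $(4)$'' that you correctly flag as the delicate point resolves as: $(3)$ gives ampleness on each component, $(4)$ gives the collision bound.
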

\begin{proof}
We show that the above construction gives a morphism from  $\overline{M}_{0,a}$ to 
the functor (\ref{eq:M0a-functor}). 
Of the remaining conditions (3) is satisfied because of $a$-stability.
Concerning condition (4), for an $a$-stable $n$-pointed tree $C$ over 
an algebraically closed field each intersection point $p$ of components 
divides the tree into two components, where for a given 
$a'\in\Q_{>0}^n$, $|a'|>2$, $a'\leq a$ the weights $a'_i$ of 
the marked points of at least one component sum up to $>1$. 
Let $C^p$ be this component if it is unique, otherwise $C^p=C$. 
The intersection $\bigcap_p C^p$ is nonempty and $w(y,T,a')>2$ 
for each $T$ which singles out an irreducible component of $\bigcap_p C^p$.
Condition (5) is satisfied because our construction defines $Y^T$ as 
the maximal set where the sections $(s_i)_{i\in T}$ are disjoint.

We define the morphism in the opposite direction: for an $S$-scheme $Y$ and 
a collection of sections $s^{i_1,i_2,i_3}_{i_4}\colon Y^T\to\P^1_{Y^T}$ 
as in (\ref{eq:M0a-functor}) we construct an $a$-stable $n$-pointed tree over $Y$.
For a geometric point $y\in Y$ let $\mathcal T^y=\{T\:|\:y\in Y^T\}$ and 
$U_y=\{y'\in\bigcap_{T\in\mathcal T^y}Y^T\:|\:\forall\,T\in\mathcal T^y\colon 
s^T_i(y')\!=\!s^T_j(y')\Rightarrow s^T_i(y)\!=\!s^T_j(y)\}$.
Define $C^y\subseteq\prod_{T\in\mathcal T^y}(\P^1_{Y^T})_{U_y}$ as the subscheme 
defined by the equations
 \[
s^{i_1,i_2,i_3}_{i_4,0}x^{i_1,i_2,i_3}_1x^{i_1,i_2,i_4}_0
=s^{i_1,i_2,i_3}_{i_4,1}x^{i_1,i_2,i_3}_0x^{i_1,i_2,i_4}_1
\] 
where $x^{i_1,i_2,i_3}_0,x^{i_1,i_2,i_3}_1$ are homogeneous coordinates 
of $\P^1_{Y^T}$, $T=\{i_1,i_2,i_3\}$, as in remark \ref{rem:M0n-functor}.
We define sections 
$s^y_i=\prod_{T\in\mathcal T^y}s^T_i|_{U_y}\colon U_y\to C^y\subseteq
\prod_{T\in\mathcal T^y}(\P^1_{Y^T})_{U_y}$ for $i\in\{1,\ldots,n\}$.

Let $I\subseteq\{1,\ldots,n\}$ such $s^y_i(y)\neq s^y_{i'}(y)$ for $i,i'\in I$ 
and for any $j\in\{1,\ldots,n\}$ there is $i\in I$ such that $s^y_j(y)=s^y_i(y)$.
Then the same construction but restricted to $T\in\mathcal T^y$ with $T\subseteq I$ 
yields a curve $\bar{C}^y\subseteq\big(\prod_{T\in\mathcal T^y,T\subseteq I}
\P^1_{Y^T}\big)_{U_y}$ with sections $(\bar{s}^y_i)_i$ over $U_y$ isomorphic to 
$(C^y,(s^y_i)_i)$. By remark \ref{rem:M0n-functor} $(\bar{C}^y,(\bar{s}^y_i)_{i\in I})$ 
is a stable $|I|$-pointed tree over $U_y$.

Using conditions (3) and (4) we show that the fibres $(C^y_y,(s^y_i(y))_i)$ 
are $a$-stable. The property that for each irreducible component 
$C^{y,k}_y$ of $C^y_y$ we have
$(\textit{number of intersection}$\linebreak $\textit{points with other components})
+\sum_{s^y_i(y)\in C^{y,k}_y}a_i>2$ follows from condition (3).
The property that if $(s^y_j(y))_{j\in J}$ coincide then $a(J)=\sum_{j\in J}a_j\leq 1$
follows from condition (4): if $(s^y_j(y))_{j\in J}$ coincide, then for  
$0<\varepsilon<\min\{|a|-a(J),|a|-2\}$ there is $a'<a$ such that $a'_j=a_j$ for 
$j\in J$ and $\sum_{i\in J^\complement}a'_i=\max\{0,2-a(J)\}+\varepsilon$, 
by (4) there exists $T\in\mathcal T^y$ such that $w(y,T,a')>2$,
and $a(J)\leq 1$ follows, because choosing $\varepsilon<\min\{1,a(J)-1\}$ in case 
$a(J)>1$ would imply $w(y,T,a')<2$.

Thus we have shown that $(C^y,(s^y_i)_i)$ is an $a$-stable $n$-pointed tree over $U_y$.

We show that the data for $T\in\mathcal T^y$ are sufficient for constructing
the pointed curve over $U_y$: for $y'\in U_y$ it is 
$\mathcal T^y\subseteq\mathcal T^{y'}$ and we show that the
projection $\prod_{T'\in\mathcal T^{y'}}(\P^1_{Y^{T'}})_{U_y\cap U_{y'}}\to
\prod_{T\in\mathcal T^y}(\P^1_{Y^T})_{U_y\cap U_{y'}}$ induces an isomorphism
$(C^{y'},(s^{y'}_i)_i)|_{U_y\cap U_{y'}}\to(C^y,(s^y_i)_i)|_{U_y\cap U_{y'}}$.
A set $T'\in T^{y'}$ is not contained in $\mathcal T^y$ if and only if 
$(s^y_i(y))_{i\in T'}$ are not pairwise distinct. Assume $T'=\{i_1,i_2,i'_3\}$ 
such that $s^y_{i_1}(y)\neq s^y_{i_2}(y)=s^y_{i'_3}(y)$.
Let $T=\{i_1,i_2,i_3\}\in\mathcal T^y$ such that the projection of $C^y$ onto the factor 
$(\P^1_{Y^T})_{U_y}$ corresponding to $T$ induces an isomorphism of the irreducible
component of $C^y_y$ that contains $s^y_{i_2}$ and $(\P^1_{Y^T})_y$.
Then the coordinates for $T$ and $T'$ are related over $U_y\cap U_{y'}$ by
$s^{i_1,i_2,i_3}_{i'_3,0}x^{i_1,i_2,i_3}_1x^{i_1,i_2,i'_3}_0
=s^{i_1,i_2,i_3}_{i'_3,1}x^{i_1,i_2,i_3}_0x^{i_1,i_2,i'_3}_1$
where $s^{i_1,i_2,i_3}_{i'_3}$ does not meet $(0:1),(1:0)$ since 
$s^T_{i'_3}$ does not meet $s^T_{i_1},s^T_{i_2}$ over $U_{y'}$.
In the case that three sections $(s^y_i)_{i\in T''}$, $T''\in\mathcal T^{y'}$, 
coincide over $y$, there is a $T'\in\mathcal T^{y'}$ with $|T'\cap T''|=2$
such that the coordinates for $T''$ and $T'$ are related in a similar way and
only two of the sections $(s^y_i(y))_{i\in T'}$ coincide.

It follows that the $a$-stable $n$-pointed trees $(C^y\to U_y,(s^y_i)_i)$  glue to an
$a$-stable $n$-pointed tree $(C\to Y,(s_i)_i)$ over $Y$. This construction defines 
a morphism from the functor (\ref{eq:M0a-functor}) to $\overline{M}_{0,a}$.

One verifies that these two morphisms are mutually inverse.
\end{proof}

\subsection{Hassett moduli spaces as limits of moduli spaces of quiver representations}

In theorem \ref{thm:Ln-limPn} we have described the Losev-Manin moduli space
as the limit over moduli spaces of quiver representations $\mathcal M^\theta(P_n)$.
By corollary \ref{cor:Qn+2-P_n} this is the same as the limit over 
moduli spaces of representations of the quiver $Q_{n+2}$ for weights 
in a neighbourhood of a vertex of $\Delta(2,n+2)$: we have 
$\overline{L}_n=\varprojlim_{\theta<a}\mathcal M^\theta(Q_{n+2})$
for $a=(1,1,\varepsilon,\ldots,\varepsilon)\in\Q^{n+2}$, $0<\varepsilon\ll1$.
On the other hand for the same weight $a$ we have $\overline{L}_n\cong\overline{M}_{0,a}$.
We show that a similar statement relating Hassett moduli spaces of 
weighted $n$-pointed curves of genus zero to an inverse limit of 
moduli spaces of representations of $Q_n$ holds in general.

\begin{thm}\label{thm:M0a-limQna}
Let $a=(a_1,\ldots,a_n)\in\Q^n$ such that $0<a_i\leq1$, $|a|=\sum_ia_i>2$.
The Hassett moduli space $\overline{M}_{0,a}$ is the limit over
the moduli spaces of representations $\mathcal M^\theta(Q_n)$ for
$\theta\in P(a)=\{\theta\in\Delta(2,n)\:|\:\theta<a\}$, i.e.\
\[\overline{M}_{0,a}\;\cong\;\:\varprojlim_{\theta<a}\mathcal M^\theta(Q_n).\]
\end{thm}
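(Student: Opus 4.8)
The plan is to follow the same strategy as in the proofs of theorems \ref{thm:Ln-limPn} and \ref{thm:M0n-limQn}, modified to account for $a$-stability. First I would invoke proposition \ref{prop:functorinvlimQnPn}, restricted to the weights in the polytope $P(a)=\{\theta\in\Delta(2,n)\:|\:\theta<a\}$: because every $\tau\in P(a)$ lies in some $\overline{C}_\theta$ for a generic $\theta<a$ and the chambers inside $P(a)$ are connected through codimension-one GIT classes, the functor $\varprojlim_{\theta<a}\mathcal M^\theta(Q_n)$ is isomorphic to the functor sending $Y$ to families $(s^\theta)_\theta$ of $\theta$-stable representations for generic $\theta<a$ satisfying equations (\ref{eq:theta-theta'}). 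Call this functor $\Lim_{<a}(Q_n)$. The relation between the polytopes $\Theta(s^\theta(y))$ and $a$-stability is the new ingredient: for $\theta\in\interior\Delta(2,n)$ a weight satisfies $\theta<a$ essentially when $\sum_{i\in J}\theta_i<\sum_{i\in J}a_i$ for the relevant subsets, so the condition ``$\theta<a$'' on the cone of weights for which a representation $V$ is stable translates into the numerical conditions $w(y,T,a)>2$ appearing in theorem \ref{thm:functorM0a}.

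Next I would restrict the family $(s^\theta)_\theta$ to the weights $\theta^T$ for $T\subseteq\{1,\ldots,n\}$, $|T|=3$ (as in the proof of theorem \ref{thm:M0n-limQn}), but only those $\theta^T$ that actually lie in $P(a)$, i.e.\ those $T$ with $\sum_{i\in T}a_i>2$ — more precisely, for each generic $\theta<a$ I need that the polytopes $\Theta(s^{\theta^T}(y))$ for admissible $T$ cover the part of $\Delta(2,n)$ lying below $a$ (the polytope $P(a)$), rather than all of $\Delta(2,n)$. This is where lemma \ref{lemma:repr-pol-Qn} and remark \ref{rem:Qn-weightspace} are used again: the ``remaining parts'' of the decomposition of $P(a)$ induced by the $\Theta(s^{\theta^T}(y))$ are intersections $\bigcap_l\{\sum_{j\in J_l}\theta_j\leq 1\}\cap P(a)$ for disjoint $J_l$, and any such part that is full-dimensional inside $P(a)$ must contain a $\theta^T$ with $\sum_{i\in T}a_i>2$ — this is precisely condition (4) of theorem \ref{thm:functorM0a} (the existence, for every $a'\leq a$ with $|a'|>2$, of a suitable $T$). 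Conversely the constraint that each $s^{\theta^T}$ is defined only where the three sections indexed by $T$ are disjoint becomes condition (5), and the ampleness/stability bound on each component becomes condition (3). I would then, as before, use remark \ref{rem:restr-eq-theta-theta'} together with the reduction to pairs $T,T'$ with $|T\cap T'|=2$ (via the wall-crossing argument at the end of the proof of lemma \ref{le:eq-invlimit}) to cut the equations (\ref{eq:theta-theta'}) down to equations (\ref{eq:M04-s3}), (\ref{eq:M04-s4}) and (\ref{eq:M0a-3}), obtaining an isomorphism $\Lim_{<a}(Q_n)\cong(\text{functor (\ref{eq:M0a-functor})})$, which by theorem \ref{thm:functorM0a} is the functor of $\overline{M}_{0,a}$.

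The main obstacle I expect is the bookkeeping around the open subschemes $Y^T$ and the non-covering of $\Delta(2,n)$. In the unweighted case every $\theta^T$ is generic and every $s^\theta$ is $\theta^T$-stable on an open set whose union (over all $T$) is $Y$; here the $\theta^T$ with $\sum_{i\in T}a_i\le 2$ are excluded, so I must show carefully that the admissible $\theta^T$ still suffice to detect and reconstruct the whole family over $P(a)$, and that the sections $s^{\theta^T}_{i_4}$ naturally live over exactly the open set $Y^T$ where $(s_i)_{i\in T}$ are pairwise disjoint — matching the functor (\ref{eq:M0a-functor}), whose data are sections over varying opens $Y^T\subseteq Y$ rather than over all of $Y$. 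Verifying that the translation between ``a representation is $\theta$-stable for some $\theta<a$'' and the three numerical/geometric conditions (3), (4), (5) is exactly right, in particular checking the $\min\{1,\sum a_i\}$ truncation in $w(y,T,a)$ against the wall structure $\sum_{i\in J_l}\theta_i\le 1$, is the delicate point; once that dictionary is in place the remaining steps are essentially identical to those in the proof of theorem \ref{thm:M0n-limQn}, and the mutual-inverse check is routine as in theorem \ref{thm:functorM0a}.
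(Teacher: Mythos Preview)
Your overall strategy is the paper's strategy: restrict proposition \ref{prop:functorinvlimQnPn} to $P(a)$, pass to the weights $\theta^T$, reduce the equations via remark \ref{rem:restr-eq-theta-theta'} and the $|T\cap T'|=2$ argument, and compare with the functor (\ref{eq:M0a-functor}). The identification of conditions (3), (4), (5) with the polytope conditions is also right in spirit.

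There is, however, one concrete misconception that would derail the argument. You propose to restrict to ``only those $\theta^T$ that actually lie in $P(a)$, i.e.\ those $T$ with $\sum_{i\in T}a_i>2$''. Both assertions are wrong. First, $\theta^T\in P(a)$ means $\theta^T_i<a_i$ componentwise, which for $i\in T$ requires roughly $a_i>\tfrac{2}{3}$; this is not equivalent to $\sum_{i\in T}a_i>2$. Second, and more importantly, restricting to $\theta^T\in P(a)$ can give the empty set: for the Losev--Manin weight $a=(1,1,\varepsilon,\ldots,\varepsilon)$ every $T$ of size $3$ contains some index $j$ with $a_j=\varepsilon<\tfrac{2}{3}\approx\theta^T_j$, so no $\theta^T$ lies in $P(a)$ at all. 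Your later remarks about the opens $Y^T$ point in the right direction, but they are inconsistent with this earlier restriction.

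What the paper actually does (and what you should do) is keep \emph{all} $T$ with $|T|=3$, but record for each $T$ an open subscheme $Y^T\subseteq Y$ over which $s^{\theta^T}$ is defined, namely the locus where $\interior\big(P(a)\cap\Theta(s^{\theta^T}(y))\big)\neq\emptyset$. This open is determined by the family, not by $T$ and $a$ alone; for $\theta^T\in P(a)$ one gets $Y^T=Y$, but for other $T$ the open may be proper or even empty, and all of them are needed to match the data of (\ref{eq:M0a-functor}). The paper then shows directly the equivalences
\[
\interior\big(P(a)\cap\Theta(s^T(y))\big)\neq\emptyset\;\Longleftrightarrow\;w(y,T,a)>2,
\qquad
P(a)\subseteq\!\!\bigcup_{y\in Y^T}\!\!\Theta(s^T(y))\;\Longleftrightarrow\;(4),
\]
and $\theta^T\in\Theta(s^{T'}(y))\Leftrightarrow(s^{T'}_j)_{j\in T}$ pairwise distinct, which is (5). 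Once you replace ``admissible $T$'' by ``all $T$, each over its own $Y^T$'', the rest of your outline goes through exactly as in the proof of theorem \ref{thm:M0n-limQn}.
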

\begin{proof}
We verify that the results in subsection \ref{subsec:functor-limPnQn} remain valid
if we consider the limit over weights in the convex polytope $P(a)$ and obtain the 
following analogue of proposition \ref{prop:functorinvlimQnPn}:
The functor $\varprojlim_{\theta<a}\mathcal M^\theta(Q_n)$ is isomorphic to the 
functor $\Lim(Q_n,a)$ defined by
\[\textstyle
Y\;\mapsto\;
\Big\{
(\phi_{V^\theta})_\theta\in\prod_{\,\textit{$\theta\!<\!a$ generic}}
\mathcal M^\theta(Q)(Y)
\;\Big|\;
\textit{locally equations {\rm(\ref{eq:theta-theta'})} hold for $(V^\theta)_\theta$}\;
\Big\}
\]
where the product is over representatives $\theta$ of the generic GIT equivalence classes
that meet $P(a)$ and $\phi_{V^\theta}\colon Y\to\mathcal M^\theta(Q)$ is the morphism 
determined by a $\theta$-stable representation $V^\theta$ over $Y$.

As in the proof of theorem \ref{thm:M0n-limQn} there is a description in terms of 
the weights $\theta^T$, but one has to take into consideration that for some
$(V^\theta)_\theta$ defining an element of $\Lim(Q_n,a)(Y)$ there might 
be points $y\in Y$ such that the polytopes $\Theta(V^\theta(y))$ do not cover 
$\Delta(2,n)$ and leave out some of the $\theta^T$. So $\Lim(Q_n,a)$ 
is isomorphic to the functor $\Lim'(Q_n,a)$ defined by
\[\textstyle
Y\!\mapsto\!
\left\{\!\!
\begin{array}{l}\!
(\phi_{V^{\theta^T}})_T\in\prod_T\mathcal M^{\theta^T}(Q_n)(Y^T)\\[1mm]
\textit{where $Y^T\subseteq Y$ open}
\end{array}
\!\!\!\left|\!
\begin{array}{l}
\textit{equations {\rm(\ref{eq:theta-theta'})} hold for $(V^{\theta^T})_T$}\\
\forall\,T\;\forall\,y\in Y^T\colon\interior\big(P(a)\cap \Theta(V^{\theta^T}(y))\big)
\neq\emptyset\\
\forall\,y\in Y\colon P(a)\subseteq\bigcup_{y\in Y^T}\Theta(V^{\theta^T}(y))\\
\forall\,T,T'\;\forall\,y\in Y^T\colon \theta^{T'}\!\!\in\Theta(V^{\theta^T}(y))
\Rightarrow y\in Y^{T'}
\end{array}\!\!\!\!
\right.\right\}
\]
with the product over $T\subseteq\{1,\ldots,n\}$, $|T|=3$.

\pagebreak

As in the proof of theorem \ref{thm:M0n-limQn} the equations relating
$V^{\theta^T}\!\!,V^{\theta^{T'}}\!$ for $|T\cap T'|=2$ are sufficient and we
introduce the sections $s^{i_1,i_2,i_3}_{i_4}$. We obtain the functor
$\Lim''(Q_n,a)$ defined by
\[\textstyle
Y\!\mapsto\!
\left\{
\begin{array}{l}\!
\textit{families of sections}\\
s^{i_1,i_2,i_3}_{i_4}\colon Y^T\to\P^1_{Y^T}\\
\textit{for $i_1,i_2,i_3,i_4\in\{1,\ldots,n\}$}\\
\textit{such that $|\{i_1,i_2,i_3\}|=3$,}\\
\textit{$Y^T\subseteq Y$ open}
\end{array}\!
\!\!\left|\!
\begin{array}{l}
(0)\;\forall\,T=\{i_1,i_2,i_3\}, |T|=3\colon\\
s^{i_1,i_2,i_3}_{i_1}\!=(0\!:\!1),s^{i_1,i_2,i_3}_{i_2}\!=(1\!:\!0), 
s^{i_1,i_2,i_3}_{i_3}\!=(1\!:\!1)\\
(1)\;\forall\,I=\{i_1,i_2,i_3,i_4\}, |I|=4\colon\\
\textit{equations {\rm(\ref{eq:M04-s3})} and {\rm(\ref{eq:M04-s4})} hold}\\
(2)\;\forall\,J=\{i_1,i_2,i_3,i_4,i_5\}, |J|=5\colon\\
s^{i_1,i_2,i_4}_{i_5,0}s^{i_1,i_2,i_3}_{i_5,1}s^{i_1,i_2,i_3}_{i_4,0}
=s^{i_1,i_2,i_4}_{i_5,1}s^{i_1,i_2,i_3}_{i_5,0}s^{i_1,i_2,i_3}_{i_4,1}\\
(3)\;\forall\,T\;\forall\,y\in Y^T\colon\interior\big(P(a)\cap \Theta(V^T\!(y))\big)
\neq\emptyset\\
(4)\;\forall\,y\in Y\colon P(a)\subseteq\bigcup_{y\in Y^T}\Theta(V^T\!(y))\\
(5)\;\forall\,T,T'\;\forall\,y\in Y^{T'}\!\colon \theta^T\!\!\in \Theta(V^{T'}\!(y))
\Rightarrow y\in Y^T
\end{array}
\!\!\!\!\right.\right\}
\]
where we write $s^T_j$ for $s^{i_1,i_2,i_3}_j$ in case the order of elements of 
$T=\{i_1,i_2,i_3\}$ is irrelevant and $V^T$ for the representation (up to isomorphism) 
corresponding to $(s^T_j)_j$.

To show that this functor coincides with functor (\ref{eq:M0a-functor})
we compare conditions (3),(4),(5) in both. 
Let $(J_l(y,T))_l$ and $w(y,T,a)$ be defined as 
in theorem \ref{thm:functorM0a}, $\theta,a'$ will always 
be elements of $\Q_{>0}^n$.
Concerning condition (3) for a given $T$ and $y\in Y^T$ we have the equivalences
\begin{center}$
\begin{array}{rcl}
\interior(P(a)\cap \Theta(V^T(y)))\neq\emptyset
&\Longleftrightarrow& 
\exists\,\theta<a,|\theta|=2\;
\forall\,l\colon\sum_{i\in J_l(y,T)}\theta_i<1\\
&\Longleftrightarrow& 
\exists\,a'<a\colon\:w(y,T,a')=|a'|>2\\
&\Longleftrightarrow& 
w(y,T,a)>2.\\
\end{array}$
\end{center}
Concerning condition (4) for fixed $y\in Y$ we have the equivalences
\begin{center}$
\begin{array}{rcl}
P(a)\subseteq\bigcup_{y\in Y^T}\Theta(V^T(y))
&\Longleftrightarrow&
\forall\,\theta<a,|\theta|=2\;\exists\,T\colon y\in Y^T,\;
\textit{$V^T(y)$ $\theta$-semistable}\\
&\Longleftrightarrow&
\forall\,\theta<a,|\theta|=2\;\exists\,T\colon y\in Y^T,\; 
\forall\, l\colon\sum_{i\in J_l(y,T)}\theta_i\leq 1\\
&\Longleftrightarrow&
\forall\,\theta<a,|\theta|=2\;\exists\,T\colon y\in Y^T,w(y,T,\theta)=2\\
&\Longleftrightarrow&
\forall\,a'\leq a,|a'|>2\;\exists\,T\colon y\in Y^T,w(y,T,a')>2.\\
\end{array}$
\end{center}
where for the implication ``$\Longrightarrow$''{} of the last equivalence
use that for $a'\leq a,|a'|>2$ and $\theta\in\interior(P(a'))$ we have
$a'=\theta+b$ with $b\in\Q_{>0}^n$.
Concerning condition (5) we have $\theta^T\!\!\in \Theta(V^{T'}\!(y))$
$\Longleftrightarrow$ $\textit{$V^{T'}(y)$ $\theta^T$-stable}$
$\Longleftrightarrow$ $\textit{$(s^{T'}_j(y))_{j\in T}$ pairwise distinct}$.

Thus $\Lim''(Q_n,a)$ is isomorphic to the functor of $\overline{M}_{0,a}$ 
in (\ref{eq:M0a-functor}).
\end{proof}

\begin{rem}
Our results fit well with the chamber decomposition in \cite[section 5]{Ha03} 
and the observation in \cite[section 8]{Ha03} that the corresponding GIT quotients
can be interpreted as the moduli spaces $\overline{M}_{0,a}$ for $|a|=2$.
In particular, we recover \cite[Theorem 8.2, Theorem 8.3]{Ha03}.
\end{rem}

\bigskip

\bigskip

\noindent
{\small
\begin{tabular}{lll}
Mark Blume&Lutz Hille\\
Mathematisches Institut, Universit\"at M\"unster&
Mathematisches Institut, Universit\"at M\"unster\\
Einsteinstrasse 62, 48149 M\"unster, Germany&
Einsteinstrasse 62, 48149 M\"unster,Germany\\
mark.blume@uni-muenster.de&lutzhille@uni-muenster.de\\
\end{tabular}
}

\end{document}